\theoremstyle{plain}
\newtheorem{theorem}{Theorem}[section] 
\newtheorem{proposition}[theorem]{Proposition}
\newtheorem{corollary}[theorem]{Corollary}
\newtheorem{lemma}[theorem]{Lemma}
\theoremstyle{definition}
\newtheorem{remark}[theorem]{Remark}
\newtheorem{definition}[theorem]{Definition}
\newcommand{\R}{\ensuremath{\mathbb R}} 
\newcommand{\dd}{\ensuremath{{\hspace{0.01em}\mathrm d}}}
\definecolor{cadmiumgreen}{rgb}{0.0, 0.42, 0.24}
\definecolor{tangelo}{rgb}{0.98, 0.3, 0.0}
\definecolor{shamrockgreen}{rgb}{0.0, 0.62, 0.38}
\definecolor{darkolivegreen}{rgb}{0.33, 0.42, 0.18}
\definecolor{deepmagenta}{rgb}{0.8, 0.0, 0.8}
\definecolor{burgundy}{rgb}{0.5, 0.0, 0.13}
\definecolor{darkbyzantium}{rgb}{0.36, 0.22, 0.33}
\newcommand\blfootnote[1]{%
	\begingroup
	\renewcommand\thefootnote{}\footnote{#1}%
	\addtocounter{footnote}{-1}%
	\endgroup
}
\title[Statistical solutions to the SME in 1D]{Statistical solutions to the Schr\"odinger map equation in 1D, via the randomly forced Landau-Lifschitz-Gilbert equation.} 
\author[E. Gussetti, M. Hofmanová]{Emanuela Gussetti \& Martina Hofmanová}
\address{Bielefeld University, Germany }
\email{ emanuela.gussetti@uni-bielefeld.de,hofmanova@math.uni-bielefeld.de} 
\begin{document}


	\date{}

	\blfootnote{\textit{Mathematics Subject Classification (2020) ---} 
		60G10 ,
		60H15, 
		60L90.
		60H30 
	}

	\smallskip
	
	\begin{abstract}
		We prove the existence of statistically stationary solutions to the Schr\"odinger map equation on a one-dimensional domain, with null Neumann boundary conditions. The model is also known as Landau-Lifschitz equation.
		We deal directly with the equation in its real-valued formulation,
		\begin{align*}
		u_{t}=u_0+\int_{0}^{t} u_r\times \partial_x^2 u_r \dd r\, ,\quad |u_t|_{\mathbb{R}^3}=1\, ,
		\end{align*}
		for $t\geq 0$, without using any transform.
		To approximate the Schr\"odinger map equation, we employ the stochastic Landau-Lifschitz-Gilbert equation.
		By a limiting procedure à la Kuksin, we establish existence of a random initial datum, whose distribution is preserved under the dynamic of the deterministic equation. Among other properties, the corresponding statistically stationary solution $u$ is proved to exhibit non-trivial dynamics in space and time and to be genuinely random.
		
		With an analogous argument, we prove the existence of stationary solutions to the stochastic \sloppy Schr\"odinger map equation
		\begin{align*}
		u_{t}=u_0+\int_{0}^{t} u_r\times \partial_x^2 u_r \dd r+\int_{0}^{t}u_r\times \circ \dd W_r\, ,\quad |u_t|_{\mathbb{R}^3}=1\, ,
		\end{align*}
		for $t\geq 0$, where $W$ is an $\mathbb{R}^3$-valued Brownian motion in time and the stochastic integral is interpreted in the Stratonovich sense.
		 We discuss the relationship between the statistically stationary solutions to the Schr\"odinger map equation, the binormal curvature flow and the cubic non-linear Schr\"odinger equation.  Additionally, we prove the existence of statistically stationary solutions to the binormal curvature flow, given for $t\geq 0$ by
		 	\begin{align*}
		 v_t= v_0+\int_0^t \partial_x v_r \times \partial^2_x v_r \dd r\, , \quad |\partial_x v_t|_{\mathbb{R}^3}=1\, ,
		 	\end{align*}
	\end{abstract}
	\maketitle
	\textit{Keywords and phrases ---} Statistical solutions, Schr\"odinger map equation, Stochastic Schr\"odinger map equation, Landau-Lifschitz equation, Landau-Lifschitz-Gilbert equation, Binormal Curvature Flow, Vortex Filament equation.

	\section{Introduction}
	We explore the question: \textit{are there non-trivial statistically stationary solutions to the deterministic Schr\"odinger map equation  (SME) on a one-dimensional domain with null Neumann boundary conditions?} 
	A statistically stationary solution is a stochastic process $z=(z_t)_t$, where the trajectories $t\mapsto z_t(\omega)$ are solutions to the deterministic Schr\"odinger map equation  
	\begin{align}\label{eq:schroedinger_eq}
	z_{t}=z_0+\int_{0}^{t} z_r\times \partial_x^2 z_r \dd r \, , \quad |z_t|_{\mathbb{R}^3}=1\, ,\quad \mathrm{for}\,\, t\geq 0\, ,
	\end{align}
	for a.e.~$\omega\in \Omega$. Additionally, the distribution of these trajectories remains invariant over time, meaning that the map $\omega \mapsto z_t(\omega)$ is distributed as a fixed probability measure $\mu$ for all $t\geq 0$. 
	
	We remark that every point $Q\in \mathbb{S}^2$ is a steady state, that is a time independent solution to the SME. This implies that the measure $\mu_q=\delta_Q$, a Dirac delta in $Q$, is a statistically stationary solution to the SME. 
	However this solution is trivial, in the sense that it is constant in both $\omega$ and the spatial variable $x$, as well as in $t$.
	We are interested in observing more complex dynamics. 
	
	In a similar spirit,  we can ask whether there are configurations that remain invariant under the stochastic Schr\"odinger map equation (SSME)
	\begin{align*}
	z_{t}=z_0+\int_{0}^{t} z_r\times \partial_x^2 z_r \dd r+\int_{0}^{t}z_r\times \circ\dd W_r \, , \quad |z_t|_{\mathbb{R}^3}=1\, ,\quad \mathrm{for} t\geq 0\, ,
	\end{align*}
	 where $W$ is a $\mathbb{R}^3$-valued Brownian motion and the stochastic integral is interpreted in the Stratonovich sense. The question can be rephrased as: \textit{are there stationary solutions to the (SSME) on a one-dimensional domain with null Neumann boundary conditions?} 
	We affirmatively answer these questions and give some qualitative properties of the statistically stationary solutions. As an immediate consequence, we establish the existence of statistically stationary solutions to the binormal curvature flow.
	
	\subsection{The first main result: existence of non-trivial statistically stationary solutions to the SME}
	The proof is based on a strategy introduced by S.~Kuksin \cite{kuksin_2003} for constructing statistically stationary solutions to the Euler equation. 
	In his work, S.~Kuksin constructs statistically stationary solutions by using a sequence of stationary solutions to a properly scaled stochastic Navier-Stokes equations with additive noise. 
	 Following a similar approach, we construct statistically stationary solutions to the SME by utilizing the stochastic Landau-Lifshitz-Gilbert (LLG) equation on a bounded one-dimensional domain $D\subset \mathbb{R}$, given for $t\geq 0$ by
	 \begin{align}\label{LLG_nu}
	 u^\nu_{t}=u^\nu_{0}+\int_{0}^{t} u^\nu_r\times \partial_x^2 u^\nu_r\dd r-\nu \int_{0}^{t} u^\nu_r\times [u^\nu_r\times \partial_x^2 u^\nu_r]\dd r+ \sqrt{\nu}\int_{0}^{t} h u^\nu_r\times \circ \dd W_r\, ,\, |u^\nu_t|_{\mathbb{R}^3}=1\, ,
	 \end{align}
	 The Landau-Lifshitz-Gilbert equation is a natural candidate for this construction, as it preserves the sphere $\mathbb{S}^2$ of $\mathbb{R}^3$.  The noise is a particular type of multiplicative, instead of an additive one: also this choice preserves the sphere $\mathbb{S}^2$ of $\mathbb{R}^3$ and respects the geometry of the problem. 
	
	For every $\nu\in(0,1]$, there exists an invariant measure $\mu^\nu$ to \eqref{LLG_nu} on $H^1(\mathbb{S}^2)$ endowed with the Borel $\sigma $-algebra $\mathcal{B}_{H^1(\mathbb{S}^2)}$, and an associated stationary solution $z^\nu$ (see E.~G. \cite{LLG_inv_measure}). 
	Using It\^o's formula, every stationary solution satisfies
	\begin{align}\label{eq:rel_partial_x_h_intro}
	t\nu\mathbb{E}\left[\|z^\nu\times \partial^2_x z^\nu\|^2_{L^2}\right]=t \nu \|\partial_x h\|^2_{L^2}\, .
	\end{align}
	Equality \eqref{eq:rel_partial_x_h_intro}, together with the geometric identity
	\begin{align*}
		\|\partial_x^2 z^\nu\|_{L^2}^2=\|z^\nu\times \partial^2_x z^\nu\|_{L^2}^2+\|\partial_x z^\nu\|^4_{L^4}\, 
	\end{align*}
	leads to a uniform bound in $\nu$ for the sequence $(z^\nu)_{\nu}$ in the space
	\begin{align*}
	\mathcal{L}^2(\Omega;L^p(0,T;H^1))\cap \mathcal{L}^{1/2}(\Omega; L^p(0,T;H^2))\, ,
	\end{align*}
	for all $p\in [1,+\infty]$ and all $T>0$.  A uniform bound in $\nu$ for the sequence $(z^\nu)_{\nu}$ holds also in $\mathcal{L}^1(\Omega; C^\alpha([0,T];L^2))$, for $\alpha\in (1/3,1/2)$. A standard stochastic compactness procedure via the Skorokhod-Jakubowski theorem yields the existence of a limit $Z$ on another probability space, whose trajectories are global in time solutions to the deterministic SME. \\
	In particular, with the above procedure the constructed solutions are always genuinely random, namely they are not constant with respect to the $\omega$ variable. Concerning the dynamic in the space variable, we obtain two classes of solutions. A trivial class of solutions: in the case $h\neq 0$ and $\partial_x h=0$, the statistically stationary solution $Z$ is a random variable (i.e. the map $\omega\mapsto Z(\omega)$ is not constant) and all the trajectories $Z(\omega)$ are constant in space (i.e. $\partial_x Z= 0$). A non-trivial class of solutions: if $\partial_x h\neq 0$, there exists a statistically stationary solution $Z$ to the SME such that $Z$ is a random variable non-constant in the space variable $x$, i.e. $\partial_x Z, \partial_x^2 Z\neq 0$.
	This procedure leads to the first result of the paper. 	
	\begin{theorem}\label{th:existence_statistical_solutions}
		Let $h\in W^{1,\infty}(D;\mathbb{R})$,
		\begin{itemize}
		\item[a.] There exists a stationary stochastic process $Z$ defined on a probability space\\ $(\hat{\Omega},\hat{\mathcal{F}},\hat{\mathbb{P}})$, such that 
		$\hat{\mathbb{P}}$-a.s.~the trajectories $Z(\hat{\omega})$ are strong global solutions to \eqref{eq:schroedinger_eq} and 
		\begin{align*}
		Z(\hat{\omega})\in L^\infty([0,\infty); H^1)\cap L_{\mathrm{loc}}^2([0,\infty);H^2)\cap C([0,\infty);H^1)\, .
		\end{align*}
		\item[b.] For every $t\geq 0$ and for every $p\in [1,\infty)$
		\begin{align*}
			\hat{\mathbb{E}}[\|Z_t\times \partial_x^2 Z_t\|^2_{L^2}]\leq \|\partial_x h\|^2_{L^2}\, ,
		\end{align*}
		\begin{align*}
		\hat{\mathbb{E}}[\| \partial_x^2 Z_t\|^2_{L^2}]+\hat{\mathbb{E}}[\|\partial_x Z_t\|^4_{L^4}]+\hat{\mathbb{E}}[\|\partial_x Z_t\|^p_{L^2}]\lesssim_D \|\partial_x h\|^2_{L^2}+1\, ,
		\end{align*}
		where $\hat{\mathbb{E}}[\, \cdot\, ]$ denotes the expectation with respect to $\hat{\mathbb{P}}$.
		\item[c.]	For every $t\geq 0$ and $\hat{\mathbb{P}}$-a.s., we have
		\begin{align*}
		 \|\partial_x Z_t\|_{L^2}^2= \|\partial_x Z_0\|_{L^2}^2
		\end{align*}
		and for every $Q\in \mathbb{S}^2$ it holds
		\begin{align*}
			\|Z_t-Q\|_{L^2}^2= \|Z_0-Q\|_{L^2}^2\, .
		\end{align*}
		The map $t\mapsto \langle Z_t \rangle$ is constant in time, where $\langle\,  \cdot \, \rangle:= \int_{D} \, \cdot \, \dd x/|D|$.
		\end{itemize}
	\end{theorem}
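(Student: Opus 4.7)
Following the strategy sketched in the introduction, I would realize $Z$ as a subsequential limit of the stationary LLG solutions $z^\nu$ provided by \cite{LLG_inv_measure}. The first step is the a priori analysis. Applying It\^o's formula to $\|\partial_x z^\nu\|_{L^2}^2$ along \eqref{LLG_nu} and using stationarity to eliminate the time derivative in expectation yields the identity \eqref{eq:rel_partial_x_h_intro}. Combined with the geometric identity $\|\partial_x^2 z^\nu\|_{L^2}^2=\|z^\nu\times\partial_x^2 z^\nu\|_{L^2}^2+\|\partial_x z^\nu\|_{L^4}^4$ and the sphere constraint $|z^\nu|=1$, this produces the $\nu$-uniform bounds in $\mathcal L^2(\Omega;L^p(0,T;H^1))\cap\mathcal L^{1/2}(\Omega;L^p(0,T;H^2))$ announced in the introduction. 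A separate increment estimate on \eqref{LLG_nu} (controlling the drift in $L^1(0,T;L^2)$ and the stochastic integral via Burkholder) gives the H\"older bound in $\mathcal L^1(\Omega;C^\alpha([0,T];L^2))$ for $\alpha\in(1/3,1/2)$.

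The compactness step combines these bounds with an Aubin--Lions type argument to establish tightness of the laws of $(z^\nu)$ on a path space such as $L^p(0,T;H^s)\cap C([0,T];H^{s'})$ with $s<2$, $s'<1$. The Skorokhod--Jakubowski theorem then furnishes, on some $(\hat\Omega,\hat{\mathcal F},\hat{\mathbb P})$, copies $\tilde z^\nu$ with the original laws converging almost surely to a limit $Z$. To pass to the limit in the equation I would rewrite the nonlinearity as $u\times\partial_x^2 u=\partial_x(u\times\partial_x u)$, so that only strong $L^2(0,T;H^1)$ convergence of $\tilde z^\nu$ is needed to identify the drift. The LLG dissipation $\nu\int u^\nu\times(u^\nu\times\partial_x^2 u^\nu)\,\dd r$ and the noise $\sqrt\nu\int h u^\nu\times\circ\,\dd W_r$ both vanish in $\mathcal L^1$ thanks to the explicit $\nu$ and $\sqrt\nu$ prefactors combined with the $\nu$-uniform $H^2$ control. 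Stationarity of $Z$ is inherited from that of each $\tilde z^\nu$ by the Portmanteau theorem applied to the joint laws at any finite collection of shifted times, and part (b) follows by lower semicontinuity of the relevant norms under weak convergence of laws.

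For part (c) the three conservation laws are deterministic consequences of \eqref{eq:schroedinger_eq} in the regularity class of (a), so I would verify them pathwise via integration by parts and the Neumann boundary condition. For the Dirichlet energy, $\tfrac12\tfrac{d}{dt}\|\partial_x Z\|_{L^2}^2=-\int\partial_x^2 Z\cdot\partial_t Z\,\dd x=-\int\partial_x^2 Z\cdot(Z\times\partial_x^2 Z)\,\dd x=0$ using $a\cdot(b\times a)=0$. For the distance to $Q\in\mathbb S^2$, the constraint $|Z|=1$ reduces the computation to $\tfrac{d}{dt}\int Q\cdot Z\,\dd x=\int Q\cdot(Z\times\partial_x^2 Z)\,\dd x=-\int(Q\times\partial_x Z)\cdot\partial_x Z\,\dd x=0$. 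The mean is immediate from $\tfrac{d}{dt}\int Z\,\dd x=\int Z\times\partial_x^2 Z\,\dd x=-\int\partial_x Z\times\partial_x Z\,\dd x=0$.

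The principal obstacle, in my view, is securing enough strong convergence of $\tilde z^\nu$ to $Z$ to handle the quadratic nonlinearity in the limit. The rewriting $Z\times\partial_x^2 Z=\partial_x(Z\times\partial_x Z)$ is the key manoeuvre: it shifts the required convergence down to strong $L^2(0,T;H^1)$, which Aubin--Lions delivers from the uniform $L^2(0,T;H^2)\cap C^\alpha([0,T];L^2)$ control. Upgrading almost sure convergence of $\tilde z^\nu$ to the moment statements of part (b) is a secondary subtlety but is routine given the polynomial growth of the $\nu$-uniform bounds.
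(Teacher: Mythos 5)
Your proposal follows essentially the same route as the paper: uniform bounds from the It\^o identity \eqref{eq:rel_partial_x_h_intro} and the geometric identity \eqref{eq:fundamental_equality}, tightness and Skorokhod--Jakubowski, identification of the nonlinearity by writing $Z\times\partial_x^2 Z=\partial_x(Z\times\partial_x Z)$ and using strong $L^2_{\mathrm{loc}}(H^1)$ convergence together with $H^1\hookrightarrow L^\infty$, vanishing of the dissipation and noise via the $\nu$ and $\sqrt\nu$ prefactors, stationarity inherited through the $C_{\mathrm w}([0,T];H^1)$ convergence, part (b) by lower semicontinuity, and part (c) by pathwise integration by parts with the Neumann condition. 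The argument is correct and matches the paper's proof in all essential steps.
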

We remark that in Theorem \ref{th:existence_statistical_solutions} b., both relations present inequalities: this means that, a priori, the derivatives of the solution could be $0$ and the solutions could be trivial. This is an additional difficulty in comparison to other results of this type: we cannot exclude the non-triviality of the solutions neither from the equation for $\|z^\nu\|^2_{L^2}$, which remains constant due to the spherical constraint, nor from the equation for $\|\partial_x z^\nu \|^2_{L^2}$.  When addressing, for instance, the non-triviality of the statistical solutions to the Euler equations via the stochastically forced Navier-Stokes equations, it is possible to conclude the non-triviality of the solutions by looking at the energy of the approximants in $L^2$ \cite{kuksin_2003}. The heart of the matter is that the conservation law $\|Z_t\|_{L^2} =\|Z_0\|_{L^2}$ is also a conservation law for the stochastic LLG equation: thus no regularity relation can be obtained. The conservation law $\|\partial_x Z_t\|_{L^2} =\|\partial_x Z_0\|_{L^2}$ is not conserved for the stochastic LLG, but we do not have enough regularity to obtain an equality in the limit in Theorem \ref{th:existence_statistical_solutions} b. 

\textcolor{black}{To tackle this issue and to show the non-triviality of $Z$, we have to find a further conservation law of the SME, which is not conserved by the stochastic LLG equation: we consider the conservation law $\langle Z_t \rangle=\langle Z_0 \rangle$. This conservation law allows us to conclude that the statistically stationary solutions constructed above are not constant in $\omega$ and, for non-constant intensity of the noise $h$, they are non-trivial in space and time: for a rigorous proof, we refer to} Section \ref{sec:proof_theorem_4_stochasticity}.
\begin{theorem}\label{th:intro_solution_stochastic}
	\begin{itemize} For $h\neq0$, the statistically stationary solutions $Z$ constructed in Theorem \ref{th:existence_statistical_solutions}:
		\item[a.] exhibit spatially non-trivial dynamics, i.e.~$\partial_x Z, \partial_x^2 Z\neq 0$ on a set of positive probability, if and only if $\partial_x h \neq 0$,
		\item[b.] exhibit non-trivial dynamics in time, i.e.~$t\mapsto Z_t$ is not a constant function on a set of positive probability, if and only if $\partial_x h \neq 0$. 
		\item[c.]  are genuinely random, i.e. the map $\hat{\omega}\mapsto Z(\hat{\omega})$ is not a constant function,
		\item[d.] if $\partial_x h \neq 0$, we cannot represent $Z$ as
		\begin{align*}
			Z_t(\hat{\omega})= \sum_{j\in \mathbb{N}}g^j_t 1_{\hat{\Omega}_j}(\hat{\omega})\, ,\quad t\geq 0\, ,\quad\hat{\mathbb{P}}\mathrm{-a.s.}\, ,
		\end{align*}
		where $g^j$ are solutions to the SME and $\{\hat{\Omega}_j\}_j$ is a partition of $\hat{\Omega}$ such that $\hat{\Omega}=\dot{\bigcup} \hat{\Omega}_j$.		
		\item[e.] If $\partial_x h\neq0$, we cannot represent $Z$ as
		\begin{align*}
			Z_t(x,\hat{\omega})=\sum_{j=1}^n g_t^j(\hat{\omega}) 1_{A_j}(x)\, ,\quad t\geq 0\, ,\quad\hat{\mathbb{P}}\mathrm{-a.s.}\, ,
		\end{align*}
		for Borel measurable sets $A_j\subset D$ disjoint and constituting a partition of $D$ and for $x\mapsto g^j_t(x,\hat{\omega})$ constant $\hat{\mathbb{P}}$-a.s.~in $A_j$.
	\end{itemize}
\end{theorem}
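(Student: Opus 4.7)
The plan is to exploit the conservation law $\langle Z_t\rangle=\langle Z_0\rangle$ of the SME, established by integration by parts together with the Neumann boundary conditions and the antisymmetry of $\times$ (so that $\int_D z\times\partial_x^2 z\,dx=0$). This law is \emph{broken} by the stochastic LLG equation, and the defect is visible via It\^o's formula. Writing the $x$-averaged equation for a stationary solution $z^\nu$ of \eqref{LLG_nu} in It\^o form, integration by parts yields
\begin{equation*}
d\langle z^\nu\rangle=\bigl[\nu\langle|\partial_x z^\nu|^2 z^\nu\rangle-\nu\langle h^2 z^\nu\rangle\bigr]dt+\sqrt\nu\,\langle h z^\nu\rangle\times dW.
\end{equation*}
Applying the scalar It\^o formula to $|\langle z^\nu_t\rangle|^2$ and using that the expectation is $t$-constant by stationarity, the factor $\nu$ cancels and I obtain the $\nu$-independent identity
\begin{equation*}
\hat{\mathbb{E}}\bigl[\langle z^\nu\rangle\cdot\langle h^2 z^\nu\rangle\bigr]-\hat{\mathbb{E}}\bigl[|\langle h z^\nu\rangle|^2\bigr]=\hat{\mathbb{E}}\bigl[\langle z^\nu\rangle\cdot\langle|\partial_x z^\nu|^2 z^\nu\rangle\bigr].
\end{equation*}
Relying on the uniform $L^4_x$-bound on $\partial_x z^\nu$ from Theorem~\ref{th:existence_statistical_solutions}(b) and the Skorokhod representation that produced $Z$, each term passes to the $\nu\to 0$ limit, yielding the same identity for $Z$ on $(\hat\Omega,\hat{\mathcal{F}},\hat{\mathbb{P}})$.

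For (a) and (b): suppose, for contradiction, that $\partial_x Z=0$ $\hat{\mathbb{P}}$-almost surely, with $\partial_x h\neq 0$. Then $Z_t(x,\hat\omega)=w_t(\hat\omega)\in\mathbb{S}^2$ is independent of $x$, and the limit identity collapses to $(\langle h^2\rangle-\langle h\rangle^2)\,\hat{\mathbb{E}}[|w|^2]=0$. Since $|w|=1$ almost surely and Jensen's inequality gives $\langle h^2\rangle>\langle h\rangle^2$ precisely when $\partial_x h\neq 0$, this is a contradiction; the reverse implication ($\partial_x h=0\Rightarrow\partial_x Z=0$) is already contained in the construction. Part (b) follows from (a) together with the rigidity fact that the only $H^2$-solutions to the SME with Neumann conditions and $\partial_t Z\equiv 0$ are spatial constants: from $\partial_t Z=0$ and $|Z|^2=1$ one derives $\partial_x^2 Z=-|\partial_x Z|^2 Z$, hence $\partial_x|\partial_x Z|^2=2\partial_x Z\cdot\partial_x^2 Z=-2|\partial_x Z|^2\,\partial_x Z\cdot Z=0$, so $|\partial_x Z|^2$ is constant in $x$ and the boundary condition forces it to vanish.

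Part (c) is shown by selecting a linear observable $F(z)=\langle z,\phi\rangle_{L^2}$ for a suitable $\phi\in L^2$ whose variance under $\mu^\nu$ stays bounded away from zero uniformly in $\nu$, thanks to the multiplicative noise $\sqrt\nu h z^\nu\times\circ dW$ with $h\neq 0$, and transferring the non-degeneracy to $\mu$ via weak convergence together with a uniform integrability argument. For (e), any representation $Z_t(x,\hat\omega)=\sum_j g^j_t(\hat\omega)1_{A_j}(x)$ with a fixed Borel partition $\{A_j\}$ forces $\partial_x Z=0$ Lebesgue-a.e.~in $D$, directly contradicting (a). For (d), a representation $Z=\sum_j g^j 1_{\hat\Omega_j}$ with deterministic trajectories $g^j$ would make the invariant law atomic on $C([0,\infty);H^1)$, which a refinement of the argument in (c) rules out, using the spatial non-triviality from (a) to obtain non-atomicity of a suitable finite-dimensional marginal of $\mu$. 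The main obstacle I anticipate is the passage to the limit in the quartic-in-$z^\nu$ term $\hat{\mathbb{E}}[\langle z^\nu\rangle\cdot\langle|\partial_x z^\nu|^2 z^\nu\rangle]$: although the identity is $\nu$-independent, convergence requires combining the $L^4_x L^1_\omega$-bound on $\partial_x z^\nu$ with the a.s.~strong convergence $z^\nu\to Z$ in $C_t L^2_x$ from Skorokhod and a uniform integrability argument.
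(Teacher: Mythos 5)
Your treatment of parts a.~and b.~is essentially the paper's argument: the identity you derive from It\^o's formula for $|\langle z^\nu_t\rangle|^2$ is exactly Lemma \ref{lemma:condition_for_non_trivial} (the $\nu$-independent relation coming from the conservation law $\langle Z_t\rangle=\langle Z_0\rangle$ of the SME being broken by \eqref{LLG_nu}), and your conclusion that $\partial_x Z=0$ a.s.~forces $\langle h^2\rangle=|\langle h\rangle|^2$, i.e.~$h$ is a zero-variance random variable on $(D,\mathcal{B}_D,\dd x/|D|)$ and hence constant, is precisely the paper's proof of a.; the passage to the limit in the quartic term via the uniform $L^4$ bound and uniform integrability is also what the paper does. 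Your pathwise rigidity argument for b.~($\partial_t Z\equiv 0$ gives $\partial_x^2 Z=-|\partial_x Z|^2Z$, so $|\partial_x Z|^2$ is constant in $x$ and vanishes by the Neumann condition) is a valid variant of the paper's use of the Poincar\'e-type bound $\|\partial_x z\|_{L^2}^2\lesssim_D\|z\times\partial_x^2 z\|_{L^2}^2$. Your argument for e.~is genuinely different from, and simpler than, the paper's (which localizes Lemma \ref{lemma:condition_for_non_trivial} to each $A_j$): an $H^1$ function on a connected one-dimensional domain is continuous, and a continuous function with finite range on a connected set is constant, so the representation forces $\partial_x Z=0$ a.s., contradicting a. That works.

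The genuine gaps are in c.~and d. For c.~you assert the existence of a linear observable $\langle z,\phi\rangle_{L^2}$ whose variance under $\mu^\nu$ is bounded below uniformly in $\nu$ ``thanks to the multiplicative noise'', but the noise intensity is $\sqrt{\nu}h$ and vanishes as $\nu\to0$, so this uniform non-degeneracy is exactly the point at issue and you give no mechanism for it. The paper argues differently: taking expectation in the limit equation and using stationarity yields $\hat{\mathbb{E}}[Z_t\times\partial_x^2Z_t]=0$, while part a.~and the Poincar\'e inequality give $Z_t\times\partial_x^2Z_t\neq0$ on a set of positive probability when $\partial_x h\neq0$, so $\hat\omega\mapsto Z(\hat\omega)$ cannot be constant; when $\partial_x h=0$ and $h\neq0$, Lemma \ref{lemma:stochasticity_partial_x_h_0} gives $\hat{\mathbb{E}}[\langle Z_t\rangle]=0$ together with $|\langle Z_t\rangle|=1$, which again forces randomness (Theorem \ref{th:solution_stochastic}). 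For d.~your plan is to rule out atomicity of a finite-dimensional marginal of the invariant law, but nothing in a.--c.~excludes an atomic law: a countable mixture of deterministic trajectories can perfectly well be genuinely random, spatially non-trivial and have non-degenerate variance, so non-atomicity does not follow from your other steps and, as far as the paper shows, need not hold at all. The paper's argument is of a different nature: restricting to $\hat\Omega_j$ and using stationarity, $g^j_t\,\hat{\mathbb{P}}(\hat\Omega_j)=\hat{\mathbb{E}}[Z_t1_{\hat\Omega_j}]$ is constant in $t$, so every $g^j$ would be a time-constant solution, contradicting part b. You need an argument of this kind; the non-atomicity route is a dead end.
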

\begin{remark}
In Theorem \ref{th:intro_solution_stochastic} a.~and b., we can rule out the fact that the measure $\mu$ associated to the statistical solution $Z$ on $(H^2(\mathbb{S}^2),\mathcal{B}_{H^2(\mathbb{S}^2)})$ is supported exclusively on the set of steady states to the SME, which in this case coincides with the set of constant functions $f(x)=Q\in\mathbb{S}^2$ for all $x\in D$.
 In strong contrast to this case, the problem of establishing non-trivial dynamics in time of the statistical solutions remains open for other models: in J.~Bedrossian, M.~Coti-Zelati, N.~Glatt-Holtz \cite{bedrossiam_coti_zelati_holtz} for passive scalar equations, in N.~Glatt-Holtz, V.~\v{S}verák, V.~Vicol \cite{holtz_sverak} and in M.~Latocca \cite[Section 1.2.3]{latocca} for the Euler equations and in J.~Fl\"odes, M.~Sy \cite{flodes_sy} for the SQG equations. 
\end{remark}
We introduce $\Gamma:=\{\hat{\omega}\in \hat{\Omega}: \|\partial_x Z_t(\hat{\omega})\|^2_{L^2}=0  \quad\forall t\geq 0\}$, the space of spatially trivial trajectories of the statistical solution $Z$ constructed in Theorem \ref{th:existence_statistical_solutions}. Assuming that $\partial_x h\neq 0$, from Theorem \ref{th:intro_solution_stochastic}, it holds that $\hat{\mathbb{P}}(\Gamma^C)>0$. The inequality \eqref{eq:estimate_below} in Section \ref{sec:lower_bound} gives a quantitative lower bound on $\hat{\mathbb{P}}(\Gamma^C)$ in terms of $h$ and of the dimension of the domain. As a concrete example, by choosing $h(x):=0.1 \cos(x)$ on the domain $D=[0, 2\pi]$, the lower bound reads
\begin{align*}
\hat{\mathbb{P}}(\hat{\Gamma}^C) > 0.2298\, ,
\end{align*}
which tells us that more than $1/5$ of the trajectories of the statistically stationary solution $Z$ exhibit non-trivial dynamics in space and time. 
The proof of the lower bound \eqref{eq:estimate_below}  relies on a small modification of Lemma \ref{lemma:condition_for_non_trivial}. As a further consequence, Lemma \ref{lemma:condition_for_non_trivial} implies that each stationary solution $u^\nu$ to the stochastic LLG \eqref{LLG_nu} has space and time dynamic trajectories with probability one, provided $\partial_x h\neq0$. 
\begin{corollary}
	Let $h\in W^{1,\infty}(D;\mathbb{R})$ so that $\partial_x h\neq0$ and $z^\nu$ be a stationary solution to \eqref{LLG_nu} with $h$ as space component of the noise. For every fixed $\nu\in(0,1]$, 
	\begin{align*}
	\mathbb{P}(\{\omega \in \Omega: \|\partial_x z^\nu_t(\omega)\|^2_{L^2}>0\quad \forall t\geq 0\})=1\, .
	\end{align*}
\end{corollary}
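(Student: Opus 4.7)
The plan is to apply Lemma \ref{lemma:condition_for_non_trivial}, in the slightly strengthened form alluded to immediately before the corollary, to the stationary solution $z^\nu$ of \eqref{LLG_nu}. The key starting point is the exact It\^o decomposition of $X_t := \|\partial_x z^\nu_t\|_{L^2}^2$: applying It\^o's formula to $\tfrac{1}{2}\|\partial_x u\|_{L^2}^2$ along \eqref{LLG_nu}, one obtains
\[
X_t = X_0 - 2\nu \int_0^t \|z^\nu_r \times \partial_x^2 z^\nu_r\|_{L^2}^2 \, dr + 2\nu\|\partial_x h\|_{L^2}^2\, t + 2\sqrt{\nu}\, M_t,
\]
where $2\nu\|\partial_x h\|_{L^2}^2$ is the Stratonovich-to-It\^o correction and $M_t$ is a continuous martingale whose quadratic-variation integrand depends linearly on $\partial_x z^\nu_t$. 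On the set $\{X_t = 0\}$, the configuration $z^\nu_t$ is spatially constant, so $\partial_x^2 z^\nu_t = 0$; the dissipation term and the quadratic variation of $M_t$ both vanish there simultaneously, while the drift remains equal to the strictly positive constant $2\nu\|\partial_x h\|_{L^2}^2$.

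I would then argue in two steps. Pathwise: a Feller-type comparison (the quantitative heart of the modified Lemma \ref{lemma:condition_for_non_trivial}) shows that $X_t$, started from any $X_0 \geq 0$, satisfies $X_t > 0$ for all $t > 0$ $\mathbb{P}$-a.s., because at the boundary a strictly positive drift meets a vanishing diffusion. In the critical case $X_0 = 0$, one may moreover solve the noise-only SDE pointwise as $z^\nu_t(x) = R(\sqrt{\nu}\, h(x)\, W_t)\, z^\nu_0$, which becomes non-constant in $x$ for every $t > 0$, precisely because $\partial_x h \neq 0$.

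Stationarity then promotes this pointwise statement to the uniform-in-$t$ claim. The pathwise result forces $\mu^\nu(\{u : \partial_x u = 0\}) = 0$: otherwise the invariance of $\mu^\nu$ under the Markov semigroup would conflict with the instantaneous escape from this set. Combined once more with pathwise non-attainment, this yields $X_t > 0$ for all $t \geq 0$ simultaneously, $\mathbb{P}$-almost surely, which is exactly the claim.

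The main obstacle is the pathwise Feller-type argument: $X_t$ is a nonlinear functional of an infinite-dimensional SPDE rather than a classical one-dimensional diffusion, so one must read off from the explicit algebraic form of $M_t$ that $d\langle M\rangle_t$ degenerates at least linearly in $X_t$ near zero, and then balance this degeneracy against the uniform positive drift $2\nu\|\partial_x h\|_{L^2}^2$. This balance is precisely the quantitative content that Lemma \ref{lemma:condition_for_non_trivial} (and its small modification) is designed to capture, and that the corollary merely imports.
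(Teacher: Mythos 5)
Your decomposition of $X_t:=\|\partial_x z^\nu_t\|^2_{L^2}$ is the correct It\^o formula (it is Lemma \ref{lemma:ito_formula_square}), but the step you yourself flag as the main obstacle is a genuine gap, not a technicality, and the lemma you hope to import it from does not contain it. The drift of $X_t$ is $2\nu\big(\|\partial_x h\|^2_{L^2}-\|z^\nu_t\times\partial_x^2 z^\nu_t\|^2_{L^2}\big)$, which is \emph{not} a function of $X_t$: the dissipation vanishes exactly on $\{X_t=0\}$, but for arbitrarily small $X_t>0$ it can be arbitrarily large (small-amplitude, high-frequency configurations), because the Poincar\'e inequality \eqref{eq:01} bounds $X_t$ \emph{by} the dissipation and not conversely. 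So the premise of a Feller-type boundary test --- ``a strictly positive drift meets a vanishing diffusion \emph{near} $X=0$'' --- fails; positivity of the drift holds only \emph{at} $X=0$, and $X_t$ is a non-Markovian functional of the SPDE, not a one-dimensional diffusion to which such a test applies. Moreover, Lemma \ref{lemma:condition_for_non_trivial} is not ``designed to capture'' this balance: it is an identity for $\mathbb{E}[|\langle z^\nu_t\rangle|^2]$, i.e.\ for the spatial \emph{average} of the solution, and says nothing about the behaviour of $\|\partial_x z^\nu_t\|^2_{L^2}$ near zero. (As a side remark, the explicit formula $z^\nu_t(x)=R(\sqrt{\nu}\,h(x)W_t)z^\nu_0$ for the noise-only dynamics is also not valid for a genuinely three-dimensional Brownian motion, since the rotation increments do not commute.)

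The paper's argument is quite different and avoids any boundary analysis. It introduces $\Gamma:=\{\omega:\|\partial_x z^\nu_t(\omega)\|_{L^2}=0\ \forall t\ge 0\}$ and reruns the computation of Lemma \ref{lemma:condition_for_non_trivial} (the It\^o formula for $|\langle z^\nu_t\rangle|^2$) multiplied by $1_{\Gamma}$. On $\Gamma$ the trajectory coincides with its spatial average, so the $|\partial_x z^\nu|^2$ term drops, the stochastic integrand $\langle z_r\rangle\cdot\langle z_r\rangle\langle h\rangle\times \dd W_r$ vanishes by orthogonality of the cross product, and the identity collapses to $(\langle h^2\rangle-\langle h\rangle^2)\,\mathbb{P}(\Gamma)=0$. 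Since $\partial_x h\neq 0$ means $h$ has positive spatial variance, this forces $\mathbb{P}(\Gamma)=0$. That algebraic collapse on the event $\Gamma$ --- using the average $\langle z^\nu_t\rangle$ rather than the gradient norm --- is the idea your proposal is missing; without it, or without a rigorous substitute for the hitting-time argument, the proof does not close.
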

The proof of this result is contained in Corollary \ref{cor:inv_measure_LLG}.

\subsection{The second main result: existence of stationary solutions to the stochastic SME}
With an analogous procedure, it is possible to show that the stochastic Schr\"odinger map equation from a one-dimensional interval $D$ to $\mathbb{S}^2$ 
\begin{align}\label{eq:stoch_schroedinger_eq}
 z_t=z_0+\int_{0}^{t} z_r\times \partial_x^2 z_r \dd r+\int_{0}^{t}z_r\times \circ \dd W_r\, ,\quad|z_t|=1\, , \quad \mathrm{for}\,\, t\geq 0 \, ,
\end{align}
 admits stationary solutions. Here $W$ is a three-dimensional Brownian motion in time with constant space component. To show this fact, it is enough to consider the modified stochastic LLG
\begin{align}\label{mod_LLG_nu}
 u^\nu_t=u^\nu_0+\int_{0}^{t} u^\nu_r\times \partial_x^2 u^\nu_r\dd r-\nu \int_{0}^{t} u^\nu_r\times [u^\nu_r\times \partial_x^2 u^\nu_r]\dd r+ \int_{0}^{t} (\sqrt{\nu}h+1) u^\nu_r\times \circ \dd W_r\, ,
\end{align}
with the spherical constraint $|u^\nu_t|=1$, for $t\geq 0$ and where $h$ can also depend on space. 
The map $h$ actually has to depend on space, to allow for statistically stationary solutions which do not coincide with the stationary spherical Brownian motion. By spherical Brownian motion we mean the unique  the $\mathbb{S}^2$-valued solution $y$ to the stochastic differential equation
\begin{align}\label{eq:intro_SBM}
	y_t=y^0+\int_{0}^{t} y_r\times \circ \dd W_r\, ,\quad\quad \mathrm{for}\,\, t\geq 0 \, ,
\end{align}
where $y^0\in \mathbb{S}^2$ (see the Appendix in \cite{LLG_inv_measure} for some properties of this stochastic process and references). The stationary spherical Brownian motion is a stationary solution to \eqref{eq:intro_SBM}. Note that the spherical Brownian motion is a space independent solution to the SSME. 
We state the second main result of the paper: for more details on the notion of martingale solutions and stationary martingale solutions, we refer to Section \ref{sec:stationary_solutions}.
\begin{theorem}\label{th:existence_stationary}
Let $h\in W^{1,\infty}(D;\mathbb{R})$. Then
\begin{itemize}
	\item[a.] There exists a stationary martingale solution $(\hat{\Omega},\hat{\mathcal{F}},(\hat{\mathcal{F}}_t)_t,\hat{\mathbb{P}}, B, Y)$ where $B$ is a $(\hat{\mathcal{F}}_t)_t$-Brownian motion and $Y$ is a stochastic process on the filtered probability space $(\hat{\Omega},\hat{\mathcal{F}},(\hat{\mathcal{F}}_t)_t,\hat{\mathbb{P}})$ such that $\hat{\mathbb{P}}$-a.s. 
	\begin{align*}
	Y(\hat{\omega})\in L^\infty([0,\infty);H^1)\cap L_{\mathrm{loc}}^2([0,\infty);H^2)\cap C([0,\infty);H^1)\, 
	\end{align*}
	and the stochastic process  $Y$ satisfies the SSME equation \eqref{eq:stoch_schroedinger_eq} (with $W$ replaced by $B$).	
	\item[b.] For every $t\geq 0$ and for every $p\in[ 1, +\infty)$
	\begin{align*}
	\hat{\mathbb{E}}[\|Y_t\times \partial_x^2 Y_t\|^2_{L^2}]\lesssim \|\partial_x h\|^2_{L^2}\, ,
	\end{align*}
	\begin{align*}
	\hat{\mathbb{E}}[\| \partial_x^2 Y_t\|^2_{L^2}]+\hat{\mathbb{E}}[\|\partial_x Y_t\|^4_{L^4}]+\hat{\mathbb{E}}[\|\partial_x Y_t\|^p_{L^2}]\lesssim_D \|\partial_x h\|^2_{L^2}+1\, ,
	\end{align*}
	where $\hat{\mathbb{E}}[\, \cdot\, ]$ denotes the expectation with respect to $\hat{\mathbb{P}}$.
	\item[c.]	For every $t\geq 0$ and $\hat{\mathbb{P}}$-a.s., it holds
	\begin{align*}
	\|\partial_x Y_t\|_{L^2}^2= \|\partial_x Y_0\|_{L^2}^2\, ,\;|\langle Y_t \rangle| =|\langle Y_0\rangle| \, .
	\end{align*}
	\item[d.] 	The stationary solution $Y$ exhibits spatially non-trivial dynamics, i.e. $\partial_x Y, \partial_x^2 Y \neq 0$, if and only if $\partial_x h\neq 0$. 
	\item[e.] The stationary spherical Brownian motion is a stationary solution to the SSME. In particular, stationary solutions to the SSME are not unique.
\end{itemize}
\end{theorem}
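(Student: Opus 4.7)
The plan is to mirror the construction of Theorem \ref{th:existence_statistical_solutions}, replacing the vanishing-noise approximation by the modified stochastic LLG equation \eqref{mod_LLG_nu}. The crucial observation is that with noise intensity $\sqrt{\nu}h+1$, the stationary It\^o balance for $\mathbb{E}\|\partial_x u^\nu\|^2_{L^2}$ yields
\begin{equation*}
\nu\,\mathbb{E}\bigl[\|u^\nu \times \partial_x^2 u^\nu\|^2_{L^2}\bigr] \;=\; \tfrac{1}{2}\|\partial_x(\sqrt{\nu}h+1)\|^2_{L^2} \;=\; \tfrac{1}{2}\nu\,\|\partial_x h\|^2_{L^2},
\end{equation*}
so the $\nu$ factors cancel and $\mathbb{E}\|u^\nu \times \partial_x^2 u^\nu\|^2_{L^2}$ is bounded uniformly in $\nu$. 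Combined with the geometric identity $\|\partial_x^2 u^\nu\|^2_{L^2} = \|u^\nu \times \partial_x^2 u^\nu\|^2_{L^2} + \|\partial_x u^\nu\|^4_{L^4}$ and the spherical constraint $|u^\nu|=1$, this delivers all the $\nu$-uniform moment bounds claimed in item b. The existence of an invariant measure $\mu^\nu$ for \eqref{mod_LLG_nu} and of an associated stationary solution $u^\nu$ follows by repeating the argument of \cite{LLG_inv_measure}; the additional spatially constant piece of the noise, $u^\nu\times\circ\,\mathrm{d}W$, is sphere-preserving and does not spoil the dissipative structure needed for the Lyapunov argument.

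For the limiting procedure I would set up tightness of the joint laws of $(u^\nu,W)$ in an appropriate Polish product space encoding the above regularity together with a uniform $C^\alpha([0,T];L^2)$ estimate coming from the stochastic equation, and then invoke the Skorokhod--Jakubowski theorem to obtain a new probability space $(\hat{\Omega},\hat{\mathcal{F}},\hat{\mathbb{P}})$, a filtration $(\hat{\mathcal{F}}_t)_t$, random variables $(\tilde u^\nu,\tilde W^\nu)$ with the same laws as $(u^\nu,W)$ and converging $\hat{\mathbb{P}}$-a.s.\ to some $(Y,B)$. The principal new difficulty, absent from the deterministic-limit case of Theorem \ref{th:existence_statistical_solutions}, is to identify $B$ as a genuine $(\hat{\mathcal{F}}_t)_t$-Brownian motion and to pass to the limit in the Stratonovich integral $\int_0^t(\sqrt{\nu}h+1)\,\tilde u^\nu_r\times\circ\,\mathrm{d}\tilde W^\nu_r$, obtaining $\int_0^t Y_r\times\circ\,\mathrm{d}B_r$. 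I would handle this by the standard martingale-problem route: rewrite the Stratonovich equation in It\^o form, pass to the limit in the It\^o equation using strong $L^p(0,T;H^1)$ convergence of $\tilde u^\nu$ (which controls both the nonlinearity $\tilde u^\nu\times\partial_x^2\tilde u^\nu$ and the vanishing $\sqrt{\nu}h$ part of the noise), verify the Lévy-type characterisation of $B$ under $\hat{\mathbb{P}}$, and finally re-express the limit in Stratonovich form, where the $L^2_{\mathrm{loc}}([0,\infty);H^2)$ regularity of $Y$ legitimises the It\^o--Stratonovich correction. The vanishing viscous term $\nu\,\tilde u^\nu\times(\tilde u^\nu\times\partial_x^2\tilde u^\nu)$ is absorbed by the $\nu$-uniform bound on $\tilde u^\nu\times\partial_x^2\tilde u^\nu$.

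Stationarity in item a.\ passes to the limit because the law of $u^\nu_t$ is independent of $t$, hence so is the law of $Y_t$. The conservation laws in item c.\ follow from It\^o's formula applied directly to \eqref{eq:stoch_schroedinger_eq}: the Stratonovich noise $Y\times\circ\,\mathrm{d}W$ is tangent to $\mathbb{S}^2$ and, being spatially constant, commutes with differentiation in $x$, so both $\|\partial_x Y_t\|^2_{L^2}$ and $|\langle Y_t\rangle|$ are preserved. For item d., the implication $\partial_x h=0\Rightarrow\partial_x Y\equiv 0$ is immediate from item b.\ via the geometric identity, while the converse proceeds by adapting Lemma \ref{lemma:condition_for_non_trivial} so that the conservation law for $\langle Y\rangle$ plays the role that the conservation of the $L^2$-norm played for the deterministic SME, exactly as in Section \ref{sec:proof_theorem_4_stochasticity}. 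Item e.\ is immediate: for $y_0$ distributed according to the invariant measure of \eqref{eq:intro_SBM}, the associated spherical Brownian motion $y$ solves the SSME as a spatially constant process with $y\times\partial_x^2 y\equiv 0$, providing a stationary martingale solution distinct from $Y$ whenever $\partial_x h\neq 0$. The main obstacle throughout is thus the identification of the limiting Brownian motion and the passage to the limit in the solution-dependent Stratonovich integral, rather than any of the geometric or energy estimates.
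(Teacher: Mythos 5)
Your proposal follows essentially the same route as the paper: approximation by the modified stochastic LLG \eqref{mod_LLG_nu}, the observation that $\partial_x(\sqrt{\nu}h+1)=\sqrt{\nu}\,\partial_x h$ makes the gradient-production balance uniform in $\nu$, tightness and Skorokhod--Jakubowski for the joint laws of $(u^\nu,W)$, a martingale-problem identification of the limiting Brownian motion and of the stochastic integral, and parts c.--e.\ handled exactly as you describe via the orthogonality of the Stratonovich noise, Lemma \ref{lemma:condition_for_non_trivial} and the spherical Brownian motion. The only (immaterial) discrepancy is the factor $1/2$ in your stationary energy balance, which by Lemma \ref{lemma:ito_formula_square} should read $\nu\,\mathbb{E}[\|u^\nu\times\partial_x^2u^\nu\|_{L^2}^2]=\nu\,\|\partial_x h\|_{L^2}^2$.
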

Up to the knowledge of the authors, there are no results concerning the stochastic Schr\"odinger map equation in the form \eqref{eq:stoch_schroedinger_eq} in the literature. In the current work we deal with a bounded one-dimensional domain and we can prove existence of stationary strong solutions, whose dynamic is non-trivial (namely there are other solutions besides the spherical Brownian motion).

 Notice that in the case of the stochastic LLG equation, the only stationary solution, if the noise is space independent, is the stationary spherical Brownian motion (see E.~G.~\cite{LLG_inv_measure}). In contrast, the stochastic SME has solutions with non-zero gradient also in presence of a noise which is space independent: this is due to the conservation of the gradient of the stochastic SME.
 \subsection{The third main result: existence of non-trivial statistically stationary solutions to the binormal curvature flow.}
 
 As discussed in R.~L.~Jerrard, D.~Smets \cite{jerrard_smets_T_1_S_2}, the Schr\"odinger map equation is related to the binormal curvature flow equation and to the cubic non-linear Schr\"odinger equation by means of some geometric transformations.
 We address and discuss the following questions:
 \begin{enumerate}
 	\item Can we apply rigorously those transforms to the trajectories of the statistically stationary solutions?
 	\item Are the transformed processes also statistically stationary solutions to the corresponding equations?
 \end{enumerate}
We apply to each trajectory $Z(\omega)$ the transformation 
\begin{align*}
X_t(\omega):=f(Z_t(\omega))\, ,\quad\quad\mathrm{where}\quad f(v):=\int_{0}^{\cdot} v(x)\dd x\, .
\end{align*}
The trajectory $X(\omega)$ is a solution to the binormal curvature flow (BCF), namely its time evolution is described for all $t\geq 0$
\begin{align*}
X_t(\omega)= X_0(\omega)+\int_0^t \partial_x X_r (\omega)\times \partial^2_x X_r (\omega)\dd r\, .
\end{align*}
A natural question arises: \textit{is $X$ also a statistically stationary solution to the BCF?} Since the transformation $f$ is time independent, the answer is positive.
\begin{theorem}\label{th:statisticalBCF}
	Let $Z$ be a statistically stationary solution to the SME on a probability space $(\hat{\Omega},\hat{\mathcal{F}}, \hat{\mathbb{P}})$ constructed as in Theorem \ref{th:existence_statistical_solutions} and let $X_t(\hat{\omega}):=f(Z_t(\hat{\omega}))$ for a.e. $\hat{\omega}\in \hat{\Omega}$ and for all $t\geq 0$. Then it holds
	\begin{itemize}
		\item[a.]  $X(\hat{\omega})\in C([0,+\infty);H^2)$ $\hat{\mathbb{P}}$-a.s. and $X(\hat{\omega})$ is $\hat{\mathbb{P}}$-a.s. a solution to the binormal curvature flow.
		\item[b.]  $X=(X_t)_t$ is a stationary process.
		\item[c.] The solution $X$ has the following properties:
		\begin{itemize}
			\item[i.] it exhibits spatially non-trivial dynamics, i.e.~$\partial_x X,\partial_x^2 X\neq 0$ if and only if $\partial_x h\neq 0$,
			\item[ii.] it is genuinely random, i.e.~$\hat{\omega}\mapsto X(\hat{\omega})$ is not a constant function,
			\item[iii.] it exhibits non-trivial dynamics in time, i.e.~$t\mapsto X_t$ is not a constant function if and only if $\partial_x h\neq 0$.
		\end{itemize}
	\end{itemize}
\end{theorem}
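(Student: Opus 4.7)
The plan is to transport every claim about $X$ from the corresponding property of $Z$, exploiting the fact that $f:v\mapsto \int_0^\cdot v(x)\dd x$ is deterministic, time-independent, linear and bounded from $H^1(D;\mathbb{R}^3)$ into $H^2(D;\mathbb{R}^3)$ (with $\partial_x f(v)=v$ and $\partial_x^2 f(v)=\partial_x v$). Only part (a) requires a genuine computation; (b) and (c) are then inherited from Theorems \ref{th:existence_statistical_solutions} and \ref{th:intro_solution_stochastic} essentially by functoriality.

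For (a), the bound $\|X_t-X_s\|_{H^2}\lesssim_D \|Z_t-Z_s\|_{H^1}$ together with $Z(\hat{\omega})\in C([0,\infty);H^1)$ yield $X(\hat{\omega})\in C([0,\infty);H^2)$ pathwise. To derive the BCF equation I first integrate the SME \eqref{eq:schroedinger_eq} in the spatial variable: using Fubini (justified pathwise since $|Z|_{\mathbb{R}^3}=1$ and $Z\in L^2_{\loc}([0,\infty);H^2)$, so that $(r,x)\mapsto Z_r\times \partial_x^2 Z_r$ is integrable on $[0,t]\times D$) I would obtain
\begin{align*}
X_t(y)-X_0(y)=\int_0^t\!\!\int_0^y \partial_x\bigl(Z_r(x)\times \partial_x Z_r(x)\bigr)\dd x\,\dd r,
\end{align*}
where I use the identity $\partial_x(Z\times \partial_x Z)=Z\times\partial_x^2 Z$. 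Evaluating the inner integral and invoking the null Neumann condition $\partial_x Z|_{\partial D}=0$ to kill the boundary term at $x=0$ gives $Z_r(y)\times \partial_x Z_r(y)=\partial_x X_r(y)\times \partial_x^2 X_r(y)$, which is exactly the BCF equation; the constraint $|\partial_x X_t|_{\mathbb{R}^3}=|Z_t|_{\mathbb{R}^3}=1$ is automatic.

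For (b), since $f$ depends neither on time nor on $\hat{\omega}$, the joint distribution of $(X_{t+s_1},\dots,X_{t+s_n})$ is the pushforward of the joint law of $(Z_{t+s_1},\dots,Z_{t+s_n})$ under $f^{\otimes n}$, and the stationarity of $Z$ is inherited by $X$. For (c), the identity $\partial_x X=Z$ already shows $\partial_x X$ never vanishes (since $|Z|_{\mathbb{R}^3}=1$), while $\partial_x^2 X=\partial_x Z$, which by Theorem \ref{th:intro_solution_stochastic} a.~is nonzero on a set of positive probability precisely when $\partial_x h\neq 0$; this gives (c.i). The map $f$ is injective on trajectories, since $f(v_1)=f(v_2)$ forces $v_1=v_2$ after differentiation, so constancy of $X$ in $\hat{\omega}$ or in $t$ would force the same for $Z$, and (c.ii)--(c.iii) follow respectively from parts c.~and b.~of Theorem \ref{th:intro_solution_stochastic}.

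The only delicate step is the derivation of the BCF equation in (a): one must pathwise exchange the time integral from \eqref{eq:schroedinger_eq} with the spatial antiderivative and then verify that the boundary term of the integration by parts drops out. Both points are routine given the $C([0,\infty);H^1)\cap L^2_{\loc}([0,\infty);H^2)$ regularity supplied by Theorem \ref{th:existence_statistical_solutions} together with the Neumann boundary conditions inherited by $Z$ from the approximants.
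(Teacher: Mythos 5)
Your proposal is correct and follows essentially the same route as the paper: all properties of $X$ are transported from $Z$ through the deterministic, time-independent antiderivative map $f$, with (c.i)--(c.iii) reduced to Theorem \ref{th:intro_solution_stochastic}. You are in fact slightly more explicit than the paper on the one delicate point, namely that the spatially constant drift $g_t$ appearing when one antidifferentiates the SME vanishes because the Neumann condition kills the boundary term $Z_r(0)\times\partial_x Z_r(0)$ at the left endpoint, so that $X=f(Z)$ itself (and not merely a time-shifted correction of it) solves the binormal curvature flow.
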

The binormal curvature flow has a physical interpretation: it describes the evolution in time of a vortex filament in a 3D fluid. The model has been formally derived by L.~S.~Da Rios \cite{da_rios}. We mention R.~L.~Jerrard, C.~Seis \cite{jerrard_seis} for a rigorous derivation. For more results and other properties of the solutions to the BCF we mention the works of V.~Banica and L.~Vega \cite{banica_vega_1,banica_vega_2,banica_vega_3,banica_vega_4}.
 
\subsubsection{Relations with the cubic focusing non-linear Schr\"odinger  equation.}
	A standard approach to study the Schr\"odinger map equation is to convert it into the cubic focusing non-linear Schr\"odinger  equation by means of the Hashimoto transform, introduced by H.~Hashimoto \cite{hashimoto}. The Hashimoto transform is a change of variable of the form
	\begin{align}\label{eq:hashimoto}
		q_t(x)=k_t(x)\exp\left(\mathrm{i}\int_0^x \tau_t(y)\dd y\right),\; k_t(x):=|\partial_x z_t(x)|\, ,\; \tau_t(x):=\frac{z_t(x)\times \partial_x z_t(x)\cdot \partial_x^2 z_t(x)}{|\partial_x z_t(x)|^2}\, ,
	\end{align}
	where $\mathrm{i}$ denotes the imaginary part, $z$ is a smooth solution to the SME and $k$  and $\tau$ are respectively the curvature and the torsion of the solution to the SME. Such transform links the SME with the cubic non-linear Schr\"odinger equation (CNSE),
	\begin{align}\label{eq:cubic_NSE}
		\mathrm{i}  p_t= \mathrm{i} p_0+\int_{0}^{t} [\partial_x^2 p_r+\frac{1}{2}p_r|p_r|^2]\dd r\, .
	\end{align}
	The transformation \eqref{eq:hashimoto} does not lead directly to the cubic non-linear Schr\"odinger equation in the form \eqref{eq:cubic_NSE}. Indeed, the actual equation satisfied by applying the transform \eqref{eq:hashimoto} is the non-local equation
	\begin{align}\label{eq:non_local_cubic_NSE}
		\mathrm{i}  q_t= \mathrm{i}q_0+\int_{0}^{t} [\partial_x^2 q_r+\frac{1}{2}q_r|q_r|^2]\dd r-\int_{0}^{t}A(r)q_r\dd r\, ,\quad A(r):=\left(2\frac{\partial^2_x q_r-k_r\tau_r^2}{k_r}+k_r^2\right)(0)\, .
	\end{align}
	Passing from the non-local equation \eqref{eq:non_local_cubic_NSE} to the CNSE, requires a time shifting: this procedure does not preserve the invariance of the law at every time and we cannot transform a statistically stationary solution to the SME into a statistically stationary solution to the CNSE. For this reason we cannot say whether the statistically stationary solutions to the  the cubic non-linear Schr\"odinger equation (NLS) in $d=1$, as constructed in  S.~Kuksin, A.~Shirikyan \cite{kuksin_shirikyan_2004} or in B.~Ferrario and M.~Zanella \cite{ferrario_zanella_stat_sol_NSE}, correspond to ours after the transform.

\subsection{Literature on statistically stationary solutions.} 
The literature on statistically stationary solutions for dispersive PDEs is vast: we mention here only the results immediately related with the current work. For a more exhaustive bibliography, we refer the interested reader to the introduction in \cite{sy_19}.

The problem of constructing statistically stationary solutions for the NSE has been considered for the first time in one dimension by J. L. Lebowitz, H. A. Rose, and E. R. Speer \cite{lebowitz_rose_speer}. In J.~Bourgain \cite{Bourgain, bourgain_94}, the author constructs a Gibbs invariant measure for the 2D defocusing cubic Schr\"odinger equation on $\mathbb{T}^2$. 
A second methodology, which we employ in this work, is the so called \textit{fluctuation-dissipation method}. This method has been introduced by S.~Kuksin \cite{kuksin_2003} in the framework of fluid-dynamics and has been developed and applied to other equations \cite{kuksin_KdV_2008,kuksin_KdV_2010}.  In relation to this work, we mention the results on existence of stationary solutions obtained by S.~Kuksin and A.~Shirikyan  \cite{kuksin_shirikyan_2004, kuksin_2008, Shirikyan_local_times}. In a recent work, B.~Ferrario and M.~Zanella \cite{ferrario_zanella_stat_sol_NSE} show existence of statistically stationary solutions to the NSE in an extended setting in comparison to the one of S.~Kuksin. 
We mention a new approach based on mixing techniques from S.~Kuksin and J.~Bourgain introduced in \cite{sy_19} by M.~Sy and M.~Sy, X.~Yu \cite{sy_xu_2021} and further employed and developed in M.~Sy, X.~Yu \cite{sy_xu_2021_2,sy_xu_2022}.

Up to the authors knowledge, the existence of statistical solutions for the SME, for the BCF and the existence of stationary solutions to the SSME are established for the first time in this work. The geometry of the problem requires a specific type of multiplicative linear noise, in contrast to the usual additive noise employed in the literature. This poses additional difficulties in the present work in establishing the non-triviality in space and time of a set of positive measure of trajectories. Moreover, we are able to obtain a quantitative non-triviality result and to study the stochastic setting.

\subsection{Organization of the paper.} In Section \ref{sec:notations_setting}, we provide some basic notations. In Section \ref{sec:LLG_known}, we recall some known results and derive new estimates for the stochastic LLG equation. In Section \ref{sec:proof_theorem_1}, we prove the existence of statistically stationary solutions to the SME as stated in Theorem \ref{th:existence_statistical_solutions}. In Section \ref{sec:proof_theorem_4_stochasticity}, we demonstrate that the statistically stationary solutions to the SME exhibit non-trivial dynamics in space, in time and are genuinely random (see Theorem \ref{th:intro_solution_stochastic}). In Section \ref{sec:lower_bound}, we give a lower bound on the probability of having non-trivial dynamics in space and time for the statistically stationary solutions to the SME.
In Section \ref{sec:stationary_solutions}, we establish Theorem \ref{th:existence_stationary}: the existence of stationary solutions to the stochastic SME. In Section \ref{sec:stat_sol_other_eq}, we discuss the relationship between the SME and other equations and we prove Theorem \ref{th:statisticalBCF}: the existence of statistically stationary solutions to the binormal curvature flow. Finally, Appendix \ref{sec:appendix_a} contains some useful inequalities; Appendix \ref{sec:appendix_b} contains some computations for the It\^o lemma for the spherical noise; Appendix \ref{sec:appendix_c} includes some known embedding theorems; Appendix \ref{sec:appendix_d} provides known results on stationary processes.

\subsection{Acknowledgements:} 
The authors gratefully acknowledge the financial support the Deutsche Forschungsgemeinschaft (DFG, German Research Foundation) – SFB 1283/2 2021 – 317210226 (Projects A1-B7).

	\section{Notations and setting.} \label{sec:notations_setting}
	\subsection{Some notations}
	For $a,b\in \R^3$, we denote by $a\cdot b$ the inner product in $\R^3$, and by $|\cdot|$ the norm inherited from it (we will not distinguish between the different dimensions, it will be clear from the context). We recall the definition of cross product
	$a\times b:=(a_2b_3-a_3b_2,a_3b_1-a_1b_3,a_1b_2-a_2b_1)$, for $a\equiv(a_1,a_2,a_3),b\equiv(b_1,b_2,b_3)\in \R^3$.
	We denote by $\mathbb{S}^2:=\{a\in\mathbb{R}^3:|a|_{\mathbb{R}^3}=1 \}$ the unit sphere in $\R ^3$.
	Let $E$ be a Banach space. For $T>0$, we denote the space of continuous functions defined on $[0,T]$ with values in $E$ by $C([0,T];E)$. We denote by $C_{\mathrm{loc}}([0,+\infty);E)$ the space of maps $f\in C([0,T];E)$ for all $T>0$. For $\alpha \in [0,1]$, we denote by $C^\alpha([0,T];E)$ the space of $\alpha$-H\"older continuous functions from $[0,T]$ with values in $E$.\\
	
	We denote by $C_{\mathrm{w}}([0,T]; E)$ the set of continuous functions $f:[0,T]\rightarrow E$ continuous with respect to the weak topology, i.e. such that the scalar functions $t\mapsto \langle g^*, f(t,\cdot) \rangle_{E^*,E} $ belong to $C([0,T];\mathbb{R})$ for any $g^*\in E^*$. We say that a sequence $(f_n)_n$ converges to $f$ in $C_{\mathrm{w}}([0,T]; E)$ provided
	\begin{align*}
	\lim_{n\rightarrow 0}\sup_{t\in [0,T]}|\langle g^*,f_n-f\rangle_{E^*,E}|=0 \quad \forall g^*\in E^*\, .
	\end{align*}
	
	Let $D\subset\mathbb{R}$ be an open bounded interval of $\mathbb{R}$. Denote by $\mathbb{N}$ the space of natural numbers and $\mathbb{N}_0:=\mathbb{N}\cup\{0\}$.
	For $n\in\mathbb{N}$, we consider the usual Lebesgue spaces $L^p:=L^p(D ;\R^n)$, for $p\in[1,+\infty]$ endowed with the norm $\|\cdot\|_{L^p}$ and the classical Sobolev spaces $W^{k,q}:=W^{k,q}(D;\R ^n)$ for integer $q\in [1,+\infty]$ and $k\in\mathbb{N}$ endowed with the norm $\|\cdot\|_{W^{k,q}}$.
	We also denote by $H^k:=W^{k,2}(D ;\R ^n)$. 
	We need to consider also functions taking values in $\mathbb{S}^2\subset\R^3$: we therefore introduce the notation
	\begin{equation*}
	H^k(\mathbb{S}^2):=H^k(D;\R^3)\cap \{g:D \rightarrow \R^3 \, \textrm{s.t.}\, |g(x)|=1\,\text{ a.e.}\, x\in D \}\, ,
	\end{equation*}
	for $k\in\mathbb{N}_0$. 
	Finally, we will denote by $L^p(W^{k,q}):=L^p([0,T];W^{k,q}(D ;\R ^n))$. We indicate with $C^k_0(D)$ the space of real valued functions with compact support on $D$, $k$-times continuously differentiable and such that every derivative is compactly supported on $D$. Let $(\Omega,\mathcal{F},\mathbb{P})$ be a probability space. We denote by $\mathcal{L}^p(\Omega;E)$ the usual Lebesgue space with respect to the probability measure $\mathbb{P}$.

	We recall the basic notion of stationary solution for a stochastic process. 	Let $X\equiv(X_t)_{t\geq 0}$ be a $E$-valued measurable stochastic process. We say that $X$ is stationary provided the laws
		\begin{align*}
		\mathcal{L}(X_{t_1},\, X_{t_2}, \dots, X_{t_n})\, ,\quad \mathcal{L}(X_{t_1+\tau},X_{t_2+\tau},\dots, X_{t_n+\tau})
		\end{align*}
		coincide on $E^{\times n}$ for all $\tau>0$  and for all $t_1,\cdots,t_n\in [0,+\infty)$.

	\section{Some results on the Landau-Lifschitz-Gilbert equation in 1D}\label{sec:LLG_known}
	We recall some known results regarding the Landau-Lifschitz-Gilbert equation on a one-dimensional domain. There are two main approaches to the study of the well posedness of the stochastic equation in one dimension: the martingale approach via the classical Stratonovich calculus (introduced in Z.~Brze\'zniak, B.~Goldys, Jegeraj \cite{brzezniak_LDP}) and the rough paths approach (explored in E.~G., A.~Hocquet \cite{LLG1D}, E.~G.~\cite{CLT} and K.~Fahim, E.~Hausenblas, D.~Mukherjee \cite{fahim_hausenblas}). We mention also the monograph from B.~Guo and X.~Pu \cite{stochastic_LL}. The solutions obtained by means of the two approaches coincide up to a set of null measure (it follows from the fact that the Stratonovich stochastic integral and its rough paths equivalent coincide up to a set of null measure, see e.g.~\cite[Theorem 5.14]{FrizHairer}). Since we do not need to use the rough paths formalism in this work, we do not introduce a rigorous notion of solution via rough paths in this setting: we refer the interested reader to \cite{LLG1D, LLG_inv_measure}.
	We rely on the classical Stratonovich calculus to establish the existence of statistically stationary solutions, thus we introduce only the notion of solution given in  Z.~Brze\'zniak, B.~Goldys, Jegeraj \cite{brzezniak_LDP}. 
	In E.~G.~\cite{LLG_inv_measure}, the existence of an invariant measure to the stochastic LLG equation is proved.
	We remark that the result of existence of invariant measures is obtained via the rough paths construction, but holds also within the formalism of martingale theory. 
	
 	There exists a unique solution to the stochastic LLG, with pathwise properties as well.
 	\begin{proposition} 
 		Let $T\geq 0$, $h\in W^{2,\infty}(\mathbb{D};\mathbb{R})$ and $u^0\in H^1(\mathbb{S}^2)$. Let $(\Omega,\mathcal{F}, (\mathcal{F}_t)_t,\mathbb{P})$ be a filtered probability space let $W$ be an $(\mathcal{F}_t)_t$-Brownian motion with values in $\mathbb{R}^3$.  There exists a unique solution $u\in L^\infty(0,T;H^1)\cap L^2([0,T]; H^2)\cap C^\alpha([0,T]; H^1)$ to 
 		\begin{equation}
 		\label{LLG}
 		u_{t}-u_0+\int_0^t\left[u_r\times(u_r\times\partial_x^2u_r) - u_r\times\partial_x^2 u_r\right] \dd r
 		=\int_{0}^{t}h u_r\times \circ \dd W_r\, 
 		\end{equation}
 		with initial condition $u^0$.
 	\end{proposition}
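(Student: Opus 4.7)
The plan is to follow the Faedo-Galerkin strategy combined with It\^o calculus developed in Brze\'zniak-Goldys-Jegaraj. I would convert \eqref{LLG} to It\^o form, construct approximations via a finite-dimensional projection, derive a priori estimates exploiting the geometric structure of the cross product and the sphere constraint, pass to the limit by stochastic compactness, and finally prove pathwise uniqueness to conclude via Yamada-Watanabe.

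\textbf{It\^o form and Galerkin scheme.} For the noise $\sigma_k(u)=h\,u\times e_k$ driven by an $\R^3$-valued Brownian motion, the vector identity $(u\times e_k)\times e_k = u_k e_k - u$ gives $\sum_{k=1}^{3} D\sigma_k(u)[\sigma_k(u)] = -2h^2 u$, so the It\^o-Stratonovich correction is $-h^2 u\,\dd t$ and \eqref{LLG} rewrites as
\begin{equation*}
u_t = u_0 + \int_0^t\bigl[u_r\times\partial_x^2 u_r - u_r\times(u_r\times\partial_x^2 u_r) - h^2 u_r\bigr]\dd r + \int_0^t h\,u_r\times \dd W_r\, .
\end{equation*}
When $|u|=1$ the Gilbert term rewrites as $-u\times(u\times\partial_x^2 u) = \partial_x^2 u + |\partial_x u|^2 u$, revealing the hidden dissipation $\partial_x^2 u$ essential for the $H^1$-estimate. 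A short It\^o computation (testing with $u$ and using $\sum_k|u\times e_k|^2 = 2|u|^2$) shows that $|u|^2$ is preserved in time. Projecting onto the span $H_n$ of the first $n$ eigenfunctions of the Neumann Laplacian produces a finite-dimensional SDE with locally Lipschitz coefficients, globally well posed thanks to the a priori bounds below.

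\textbf{A priori estimates, compactness, and uniqueness.} Applying It\^o's formula to $\|\partial_x u_n\|_{L^2}^2$ and combining with the geometric identity $\|\partial_x^2 u_n\|_{L^2}^2 = \|u_n\times\partial_x^2 u_n\|_{L^2}^2 + \|\partial_x u_n\|_{L^4}^4$, the Gagliardo-Nirenberg interpolation $\|\partial_x u_n\|_{L^4}^4 \lesssim \|\partial_x u_n\|_{L^2}^3\|\partial_x^2 u_n\|_{L^2}$ (valid in dimension one), and Burkholder-Davis-Gundy, one obtains $n$-uniform bounds in $\mathcal{L}^p(\Omega; L^\infty(0,T;H^1)\cap L^2(0,T;H^2))$. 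Aubin-Lions yields tightness in $C([0,T];L^2)\cap L^2(0,T;H^1)$ and Skorokhod-Jakubowski produces a martingale solution on a new probability space, with the sphere constraint surviving the limit via strong $L^2$ convergence. For pathwise uniqueness, setting $v=u^1-u^2$ and applying It\^o's formula to $\|v\|_{L^2}^2$, the decomposition $u^1\times\partial_x^2 u^1 - u^2\times\partial_x^2 u^2 = v\times\partial_x^2 u^1 + u^2\times\partial_x^2 v$ combined with integration by parts exposes the cancellations of the cross product, and the residual bilinear terms are absorbed by Gronwall using the $L^2(H^2)$-bound. Yamada-Watanabe then upgrades the martingale solution to a strong pathwise solution on the original filtered space, and the $C^\alpha([0,T];H^1)$ regularity follows from Kolmogorov's criterion applied to the increments read off the equation (drift controlled in $L^2$ via the $L^2(H^2)$ estimate, stochastic part via BDG).

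The main obstacle is the interplay between the pointwise constraint $|u|=1$ and the $H^2$-estimate: the geometric identity that makes the damping term dissipative at the $H^1$-level requires the constraint, which is not automatically preserved by a naive Galerkin truncation. One has to either use a Galerkin scheme adapted to $\mathbb{S}^2$ or add a penalization enforcing $|u_n|\to 1$, and then recover the constraint in the limit; this recovery is the most delicate step of the construction.
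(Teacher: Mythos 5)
The paper does not actually prove this proposition: it is recalled from Z.~Brze\'zniak, B.~Goldys, T.~Jegaraj \cite{brzezniak_LDP} (and its rough-path counterpart \cite{LLG1D}), so there is no in-paper argument to compare against. Your outline reproduces the standard construction of those references --- It\^o reformulation with correction term $-h^2u\,\dd t$, Galerkin approximation, energy estimates built on $-u\times(u\times\partial_x^2u)=\partial_x^2u+|\partial_xu|^2u$ and $\|\partial_x^2u\|_{L^2}^2=\|u\times\partial_x^2u\|_{L^2}^2+\|\partial_xu\|_{L^4}^4$, stochastic compactness, pathwise uniqueness and Yamada--Watanabe --- and the individual computations you quote (the value of the correction term, the conservation of $|u|^2$) check out.

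Two places in the sketch are thinner than they appear. First, the uniform-in-$n$ bound in $\mathcal{L}^p(\Omega;L^2(0,T;H^2))$ cannot be obtained at the Galerkin level the way you describe: converting $\|u_n\times\partial_x^2u_n\|_{L^2}^2$ into $\|\partial_x^2u_n\|_{L^2}^2-\|\partial_xu_n\|_{L^4}^4$ requires $|u_n|=1$ pointwise, which the truncation destroys --- you acknowledge this at the end, but your estimate paragraph nevertheless uses the identity before the constraint has been recovered. The workable order of operations (as in \cite{brzezniak_LDP}) is to close the $H^1$ estimate using only the constraint-free dissipation $\|u_n\times\partial_x^2u_n\|_{L^2}^2$ (note $\langle u_n\times(u_n\times\partial_x^2u_n),\partial_x^2u_n\rangle=-\|u_n\times\partial_x^2u_n\|_{L^2}^2$ holds without $|u_n|=1$), pass to the limit, prove $|u|=1$ for the limit via the It\^o computation on $|u|^2$, and only then upgrade to $H^2$ regularity for the limit solution. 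Second, Kolmogorov's criterion applied to increments whose drift is controlled in $L^2$ yields H\"older continuity with values in $L^2$, not in $H^1$; the stated $C^\alpha([0,T];H^1)$ regularity requires an additional interpolation against the $L^\infty(0,T;H^1)\cap L^2(0,T;H^2)$ bounds and is not a direct consequence of the argument you give. Neither point derails the plan, but both must be spelled out for the proof to close.
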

	 Also a pathwise higher order regularity result holds.
	 \begin{proposition}
	 	Let $k\in \mathbb{N}$, $h\in W^{k+1,\infty}$ and $u^0\in H^{k}(\mathbb{S}^2)$. There exists a unique solution to \eqref{LLG} with initial condition $u^0$ and such that $u\in L^\infty([0,T]; H^k)\cap L^2([0,T]; H^{k+1})\cap C^\alpha([0,T]; H^{k-1})$.
	 \end{proposition}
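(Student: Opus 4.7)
The argument proceeds by induction on $k$, the base case $k=1$ being the previous proposition. For the inductive step I would (i) establish a uniform a priori estimate for $\|u\|_{L^\infty(0,T;H^k)}^2+\|u\|_{L^2(0,T;H^{k+1})}^2$ on a sequence of smoother approximants and (ii) transfer it to the unique solution given by the base proposition through a stability/compactness argument.

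The first step is to rewrite the equation in dissipative form. Using $|u|=1$, whence $u\cdot\partial_x u=0$ and $u\cdot\partial_x^2 u=-|\partial_x u|^2$, one has the pointwise identity $-u\times(u\times\partial_x^2 u)=\partial_x^2 u+|\partial_x u|^2 u$; equation \eqref{LLG} becomes
$$
\partial_t u=\partial_x^2 u+u\times\partial_x^2 u+|\partial_x u|^2 u+h\,u\times\circ\dd W,
$$
in which the principal part $\partial_x^2 u$ is coercive. Converting the Stratonovich integral into It\^o adds only a bounded lower-order correction of the form $-\tfrac{1}{2}h^2 u$.

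Assume the result at level $k-1$ and fix $u^0\in H^k(\mathbb{S}^2)$. Apply $\partial_x^k$ to the equation (after a standard smoothing, or via a Galerkin truncation in the basis of Neumann eigenfunctions of $-\partial_x^2$) and then It\^o's formula to $\|\partial_x^k u_t\|_{L^2}^2$. Integration by parts with the zero-Neumann boundary conditions turns the $\partial_x^2 u$ contribution into the coercive term $-2\|\partial_x^{k+1}u\|_{L^2}^2$. The top-order part of $\partial_x^k(u\times\partial_x^2 u)$ paired with $\partial_x^k u$ vanishes after one further integration by parts, thanks to the antisymmetry $(a\times b)\cdot b=0$; all commutator contributions carry at most $k+1$ derivatives of $u$ and are bounded, via Leibniz, the one-dimensional embedding $H^1\hookrightarrow L^\infty$, and Gagliardo--Nirenberg interpolation, by $C\bigl(1+\|u\|_{H^k}^{2q}\bigr)\|u\|_{H^k}^2+\varepsilon\|\partial_x^{k+1}u\|_{L^2}^2$ for some $q=q(k)$, the $\varepsilon$-term being absorbed into the dissipation. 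The semilinear piece $|\partial_x u|^2 u$ is treated analogously using the inductive $L^\infty(0,T;H^{k-1})$ bound, and the stochastic contribution is controlled by BDG, exploiting $h\in W^{k+1,\infty}$. A stochastic Gr\"onwall argument applied up to a suitable stopping time $\tau_R:=\inf\{t:\|u_t\|_{H^k}\geq R\}\wedge T$ then yields
$$
\mathbb{E}\Bigl[\sup_{0\leq t\leq \tau_R}\|u_t\|_{H^k}^2+\int_0^{\tau_R}\|u_s\|_{H^{k+1}}^2\,\dd s\Bigr]\leq C\bigl(T,k,h,\|u^0\|_{H^k}\bigr),
$$
uniformly in $R$, so that $\tau_R\to T$ as $R\to\infty$ almost surely.

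To conclude, approximate $u^0$ in $H^k$ by $u^{0,n}\in H^{k+1}(\mathbb{S}^2)$ (for instance via smoothing followed by pointwise normalisation onto $\mathbb{S}^2$) and pass to the limit along the corresponding solutions using the uniform estimate above and the uniqueness/stability of the base proposition, identifying the weak-$*$ limit with the unique solution $u$. H\"older regularity in $H^{k-1}$ then follows from the equation: the drift belongs to $L^2(0,T;H^{k-1})$ by the above bound, and the It\^o integral has an $H^k$-valued square-integrable integrand, so BDG combined with the Kolmogorov continuity criterion (or the factorisation method) delivers $u\in C^\alpha([0,T];H^{k-1})$ for every $\alpha<1/2$. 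The main obstacle is the bookkeeping of the top-order cancellations in $\langle\partial_x^k(u\times\partial_x^2 u),\partial_x^k u\rangle_{L^2}$: one has to verify that after one integration by parts no $k+2$-derivative term survives and that every $k+1$-derivative remainder can be absorbed into the dissipation $-2\|\partial_x^{k+1}u\|_{L^2}^2$. This is a careful Leibniz expansion combined with the pointwise identity $a\times a=0$, and the intricacy grows with $k$.
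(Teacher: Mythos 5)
The paper itself does not prove this proposition: it is recalled from the cited works on the one-dimensional stochastic LLG equation (Brze\'zniak--Goldys--Jegaraj, Gussetti--Hocquet, Gussetti), where the argument is precisely the induction-and-energy-estimate scheme you describe --- rewrite the equation in dissipative form via $-u\times(u\times\partial_x^2u)=\partial_x^2u+|\partial_xu|^2u$, differentiate $k$ times in a Neumann-eigenbasis Galerkin scheme (which is indeed needed to justify the higher-order integrations by parts, since $\partial_xu|_{\partial D}=0$ alone does not kill the boundary terms for $k\ge2$), exploit the antisymmetric cancellation in $\langle u\times\partial_x^{k+2}u,\partial_x^ku\rangle$, and absorb the commutators into $-\|\partial_x^{k+1}u\|_{L^2}^2$. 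So the skeleton is right, and the concluding $C^\alpha([0,T];H^{k-1})$ step via Kolmogorov is fine. One minor slip: for the three-dimensional noise $hu\times\circ\dd W$ the It\^o--Stratonovich correction is $-h^2u$, not $-\tfrac12h^2u$ (cf.\ Lemma \ref{lemma:ito_stratonovich_correction}); this is harmless.

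The substantive gap is the closure of the Gr\"onwall argument. From a commutator bound of the form $C(1+\|u\|_{H^k}^{2q})\|u\|_{H^k}^2+\varepsilon\|\partial_x^{k+1}u\|_{L^2}^2$ with $q\ge1$ (and $q\ge 1$ is what a naive Leibniz--Agmon estimate produces already for $k=2$), Gr\"onwall yields a constant containing $\exp\bigl(C\int_0^{\tau_R}(1+\|u_s\|_{H^k}^{2q})\dd s\bigr)$, which is not a priori finite in expectation; hence the asserted bound $\mathbb{E}[\sup_{t\le\tau_R}\|u_t\|_{H^k}^2+\int_0^{\tau_R}\|u\|_{H^{k+1}}^2\dd s]\le C(T,k,h,\|u^0\|_{H^k})$ \emph{uniformly in $R$} does not follow as written, and with it neither does $\tau_R\to T$. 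You must use the one-dimensional structure to make the Gr\"onwall coefficient independent of the top-order norm: either invoke the induction hypothesis, which gives $u\in L^\infty(0,T;H^{k-1})\cap L^2(0,T;H^{k})$ $\mathbb{P}$-a.s., or interpolate all intermediate derivatives between $\|\partial_xu\|_{L^2}$ and $\|\partial_x^{k+1}u\|_{L^2}$ with total top-order power strictly below $2$, using that $\|\partial_xu_t\|_{L^2}^2$ is a.s.\ bounded on $[0,T]$ by the energy identity of Lemma \ref{lemma:ito_formula_square}. Either way the differential inequality becomes $\dd\|\partial_x^ku\|_{L^2}^2\le -\|\partial_x^{k+1}u\|_{L^2}^2\dd t+g(t)\|\partial_x^ku\|_{L^2}^2\dd t+\dd M_t$ with $g\in L^1(0,T)$ a.s., and a pathwise (or localized stochastic) Gr\"onwall gives a.s.\ finiteness of $\sup_t\|u_t\|_{H^k}^2+\int_0^T\|u\|_{H^{k+1}}^2\dd t$ --- which is exactly what the proposition asserts; no moment bound with a deterministic constant is claimed, and aiming for one is where your write-up overreaches. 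With that repair the remainder of the proposal goes through.
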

	We recall the results in E.~G. \cite{LLG_inv_measure} on existence of an invariant measure to \eqref{LLG}.
	\begin{proposition}\label{pro:a_priori_approx_nu}
		\begin{itemize}
			\item The Markov semigroup $(P_t)_t$ on $H^1(\mathbb{S}^2)$ associated to \eqref{LLG} admits an invariant measure $\mu$.
			\item Every stationary solution $z$ to \eqref{LLG} fulfils  $\mathbb{E}[\|z\|^2_{H^1}]<+\infty$ and
			\begin{align}\label{eq:equality_u_cross_laplacian}
				\mathbb{E}[\|z\times \partial_x^2 z\|^2_{L^2}]=\|\partial_x h\|^2_{L^2}\, ,\quad \mathbb{E}[\|\partial_x^2 z\|_{L^2}]\lesssim \|\partial_x h\|^2_{L^2}\, .
			\end{align}
		\end{itemize}
	\end{proposition}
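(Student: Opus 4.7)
The plan is to combine an It\^o computation with a Krylov--Bogoliubov procedure. Well-posedness from the preceding propositions makes $(P_t)_t$ a Feller Markov semigroup on $H^1(\mathbb{S}^2)$, so once tightness of the time averages $\nu_T := T^{-1}\int_0^T P_t^*\delta_{u^0}\,\dd t$ is established, Krylov--Bogoliubov delivers an invariant measure $\mu$. Tightness in turn follows from a uniform-in-$T$ bound on the averaged second moment $T^{-1}\int_0^T \mathbb{E}[\|u_t\|^2_{H^2}]\,\dd t$ and the compact embedding $H^2\hookrightarrow H^1$; this averaged $H^2$ bound is itself a by-product of the energy identity derived next.

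The a priori estimates are obtained by applying It\^o's formula to $\|\partial_x u\|^2_{L^2}$ in the equation \eqref{LLG}. Converting the Stratonovich noise $hu\times\circ\,\dd W$ to It\^o form adds the correction $-h^2u$ to the drift (since $\sum_k(u\times e_k)\times e_k=-2u$). Using the sphere constraint $|u|=1$ (hence $u\cdot\partial_x u=0$ and $u\cdot\partial_x^2 u=-|\partial_x u|^2$), the geometric identity $\|\partial_x^2 u\|^2_{L^2}=\|u\times\partial_x^2 u\|^2_{L^2}+\|\partial_x u\|^4_{L^4}$, and the pointwise cancellation $\sum_k (u\times e_k)\cdot(\partial_x u\times e_k)=u\cdot\partial_x u=0$ inside the It\^o trace, the various drift contributions collapse to
\[
\dd\|\partial_x u_t\|^2_{L^2}=\bigl(-2\|u_t\times\partial_x^2 u_t\|^2_{L^2}+2\|\partial_x h\|^2_{L^2}\bigr)\dd t+\dd M_t,
\]
where $M$ is a local martingale. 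Localising, taking expectation, and using that $t\mapsto\mathbb{E}[\|\partial_x z_t\|^2_{L^2}]$ is constant along the stationary solution $z$ yields the equality $\mathbb{E}[\|z\times\partial_x^2 z\|^2_{L^2}]=\|\partial_x h\|^2_{L^2}$. The second estimate then follows by inserting this equality into the geometric identity, controlling $\|\partial_x z\|^4_{L^4}$ by the one-dimensional Gagliardo--Nirenberg interpolation $\|\partial_x z\|^4_{L^4}\lesssim \|\partial_x z\|^3_{L^2}\|\partial_x^2 z\|_{L^2}+\|\partial_x z\|^4_{L^2}$, absorbing $\|\partial_x^2 z\|_{L^2}$ via Young, and applying Jensen. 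The finiteness $\mathbb{E}[\|z\|^2_{H^1}]<\infty$ is inherited from the tightness bound built into the construction of $\mu$.

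The main obstacle is a mild circularity: the derivation above implicitly assumes that $M$ is a true, not merely local, martingale, which requires an a priori second-moment bound on $\|\partial_x^2 z\|_{L^2}$ --- precisely the quantity one is trying to estimate. The standard remedy, followed in \cite{LLG_inv_measure}, is to perform the whole computation at the level of a Galerkin (or vanishing-regularisation) approximation, where all the quantities involved are finite by construction, establish the energy identity for each approximant, and then pass to the limit using weak-$*$ lower semicontinuity of the norms appearing in the estimate, together with the conservative nature of the spherical constraint.
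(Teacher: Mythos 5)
The paper does not prove this proposition at all --- it is explicitly recalled from the reference \cite{LLG_inv_measure} --- but your sketch reproduces exactly the machinery the paper itself deploys for the scaled equation \eqref{LLG_nu}: the It\^o--Stratonovich correction $-h^2u$ and the trace computation (Lemma \ref{lemma:ito_stratonovich_correction}, Lemma \ref{lemma:traccia}, Lemma \ref{lemma:ito_formula_square}), expectation plus stationarity to kill the martingale and the time derivative (as in Lemma \ref{lemma:a_priori_bound_derviative_p}), and the geometric identity of Lemma \ref{lemma:fund_equality} combined with the one-dimensional Ladyzhenskaya interpolation (as in Lemma \ref{cor:laplaciano_nu}), so this is essentially the same approach and it is correct. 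Two trivial remarks: the pointwise cancellation in the trace is $\sum_k(u\times e_k)\cdot(\partial_x u\times e_k)=2\,u\cdot\partial_x u=0$ (factor $2$, same conclusion), and your Young/Jensen step yields a bound by $\|\partial_x h\|_{L^2}^2$ plus higher powers of $\|\partial_x h\|_{L^2}$, which is how the recalled estimate should be read (the implicit constant depends on $h$, consistent with how the paper uses it in Lemma \ref{cor:laplaciano_nu}).
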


	\subsection{A priori estimates and equalities on the perturbed equation.}
	In this section, we prove a priori bounds on a sequence $(z^\nu)_\nu$ of stationary solutions to  \eqref{LLG_nu}. Without loss of generality, we assume that the stationary solutions $(z^\nu)_\nu$ are all constructed on a common probability space with the same Brownian motion. We first recall the following relation on the norm of the gradient $\partial_x z^\nu$.
	\begin{lemma}\label{lemma:u_dot_nablau_0}\cite[Lemma 4.6]{LLG_inv_measure} Let $u\in L^\infty(0,T;H^1)$ such that $|u_t(x)|_{\mathbb{R}^3}=1\,$ for a.e. $(t,x)\in[0,T]\times D\,$, then
		\begin{align*}
		u_t(x)\cdot \partial_x u_t(x)=0\quad a.e.\,\, (t,x)\in[0,T]\times D\, .
		\end{align*}
		In particular, it follows that $|u_t(x)\times \partial_x u_t(x)|_{\mathbb{R}^3}=|\partial_xu_t(x)|_{\mathbb{R}^3}$ for a.e. $(t,x)\in[0,T]\times D\,$.
	\end{lemma}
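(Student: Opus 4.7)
The plan is to exploit the sphere constraint $|u_t(x)|_{\mathbb{R}^3}^2=1$ by differentiating it in $x$ and recognizing the derivative through the usual chain rule for $H^1$ functions.

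First, fix $t\in[0,T]$ such that $u_t\in H^1(D;\mathbb{R}^3)$ and $|u_t(x)|_{\mathbb{R}^3}=1$ for a.e. $x\in D$; by hypothesis this holds for a.e. $t\in[0,T]$. The scalar function $\varphi(x):=|u_t(x)|_{\mathbb{R}^3}^2$ coincides a.e. with the constant $1$, hence $\varphi\in H^1(D;\mathbb{R})$ with $\partial_x\varphi=0$ a.e. On the other hand, the chain rule in $H^1$ (applied componentwise, or equivalently to the $C^1$ map $v\mapsto |v|^2$) gives
\begin{equation*}
\partial_x\varphi(x)=2\,u_t(x)\cdot\partial_x u_t(x)\quad\text{for a.e. }x\in D.
\end{equation*}
Combining the two identities yields $u_t(x)\cdot\partial_x u_t(x)=0$ for a.e. $x\in D$. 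Applying this for a.e. $t\in[0,T]$ and using Fubini gives the first claim on $[0,T]\times D$.

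For the second statement, I would use the standard cross-product identity in $\mathbb{R}^3$,
\begin{equation*}
|a\times b|_{\mathbb{R}^3}^2=|a|_{\mathbb{R}^3}^2\,|b|_{\mathbb{R}^3}^2-(a\cdot b)^2,
\end{equation*}
valid for all $a,b\in\mathbb{R}^3$. Setting $a=u_t(x)$ and $b=\partial_x u_t(x)$ at a point where both $|u_t(x)|_{\mathbb{R}^3}=1$ and $u_t(x)\cdot\partial_x u_t(x)=0$ (which holds for a.e. $(t,x)$ by the first part), we obtain
\begin{equation*}
|u_t(x)\times\partial_x u_t(x)|_{\mathbb{R}^3}^2=|\partial_x u_t(x)|_{\mathbb{R}^3}^2,
\end{equation*}
and taking the square root gives the desired equality.

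The only technical point worth double-checking is the chain-rule step, i.e. that $u\in H^1(D;\mathbb{R}^3)$ implies $|u|_{\mathbb{R}^3}^2\in H^1(D;\mathbb{R})$ with weak derivative $2u\cdot\partial_x u$. This is a standard fact: one may either invoke the Stampacchia/Marcus--Mizel chain rule for Lipschitz functions composed with Sobolev maps, or argue by approximating $u$ by smooth functions in $H^1(D;\mathbb{R}^3)$ and passing to the limit, using the one-dimensional embedding $H^1(D)\hookrightarrow C(\bar D)$ to control products. There is no genuine obstacle here; the content of the lemma is essentially the geometric observation that tangent vectors to $\mathbb{S}^2$ are perpendicular to the position vector.
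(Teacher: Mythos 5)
Your proof is correct. The paper itself does not reprove this lemma (it is quoted from \cite[Lemma 4.6]{LLG_inv_measure} without an argument), and your proof is exactly the standard one that reference would give: differentiate the constraint $|u_t|^2=1$ via the chain rule for Sobolev functions (unproblematic here since $H^1(D)\hookrightarrow C(\bar D)$ in one dimension, so $u_t\cdot\partial_x u_t\in L^2$), then conclude the second identity from Lagrange's identity $|a\times b|^2=|a|^2|b|^2-(a\cdot b)^2$.
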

		\begin{lemma}\label{lemma:a_priori_bound_derviative_p}
		For all $\nu\in(0,1]$ and for all $p\in [2,+\infty)$, and $t\geq 0$
		\begin{align*}
		\mathbb{E}[\|\partial_x z^\nu_t\|_{L^2}^p]\lesssim_p \|\partial_x h\|^{p}_{L^2}\, ,
		\end{align*}
		\begin{align}\label{eq:condition_stationarity_nu}
		\mathbb{E}\left[\sup_{t\in [0,T]}\|\partial_x z^\nu_t\|_{L^2}^p\right]\lesssim_{T,h,p} \|\partial_x h\|^{p}_{L^2}\, ,
		\end{align}
		where the implicit constants are independent on $\nu$.
	\end{lemma}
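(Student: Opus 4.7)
The plan combines a Poincar\'e-type inequality enforced by the geometry with an It\^o moment recursion exploiting stationarity. Introduce $\phi_t := z^\nu_t \times \partial_x z^\nu_t$. Since $|z^\nu_t|=1$ and $z^\nu_t \perp \partial_x z^\nu_t$ by Lemma~\ref{lemma:u_dot_nablau_0}, one has $|\phi_t| = |\partial_x z^\nu_t|$ a.e., so $\|\phi_t\|_{L^2} = \|\partial_x z^\nu_t\|_{L^2}$. Differentiating, $\partial_x \phi_t = z^\nu_t \times \partial_x^2 z^\nu_t$, the $\partial_x z^\nu_t \times \partial_x z^\nu_t$ term vanishing. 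The null Neumann boundary conditions on $z^\nu$ force $\phi_t$ to vanish on $\partial D$, so the Poincar\'e inequality for $H^1_0(D;\mathbb{R}^3)$ yields
\begin{equation*}
\|\partial_x z^\nu_t\|_{L^2} \leq C_D\,\|z^\nu_t \times \partial_x^2 z^\nu_t\|_{L^2},
\end{equation*}
which combined with the stationary identity \eqref{eq:rel_partial_x_h_intro} already covers the case $p=2$.

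For higher integer moments, I apply It\^o's formula to $M_t^p := (\|\partial_x z^\nu_t\|^2_{L^2})^p$. The conservative drift contributes nothing by a triple-product identity, the dissipative drift contributes $-2\nu\|z^\nu_t \times \partial_x^2 z^\nu_t\|^2_{L^2}$, and the Stratonovich-to-It\^o correction of the noise $\sqrt{\nu}hz^\nu \times \circ\dd W$ contributes $+2\nu\|\partial_x h\|^2_{L^2}$, giving
\begin{equation*}
\dd M_t = \bigl[-2\nu\|z^\nu_t \times \partial_x^2 z^\nu_t\|^2_{L^2} + 2\nu\|\partial_x h\|^2_{L^2}\bigr]\,\dd t + \dd N_t.
\end{equation*}
Using $\partial_x z^\nu \cdot (\partial_x z^\nu \times e_i) = 0$ and the identity $\sum_i (a\cdot(b\times e_i))(c\cdot(d\times e_i)) = (a\times b)\cdot(c\times d)$, the quadratic variation is
\begin{equation*}
\tfrac{\dd\langle N\rangle_t}{\dd t} = 4\nu\Bigl|\int_D (\partial_x h)\,(z^\nu_t \times \partial_x z^\nu_t)\,\dd x\Bigr|^2 \leq 4\nu\,\|\partial_x h\|^2_{L^2}\,M_t,
\end{equation*}
by Cauchy--Schwarz and $|z^\nu \times \partial_x z^\nu| = |\partial_x z^\nu|$. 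Expanding $\dd M_t^p$, taking expectation, and using stationarity to annihilate the left-hand side yields
\begin{equation*}
\mathbb{E}\bigl[M_t^{p-1}\|z^\nu_t \times \partial_x^2 z^\nu_t\|^2_{L^2}\bigr] \leq p\,\|\partial_x h\|^2_{L^2}\,\mathbb{E}[M_t^{p-1}].
\end{equation*}
Multiplying the Poincar\'e inequality pointwise by $M_t^{p-1}$ converts this into the recursion $\mathbb{E}[M_t^p] \leq C_D^2\,p\,\|\partial_x h\|^2_{L^2}\,\mathbb{E}[M_t^{p-1}]$. Induction on $p \geq 1$ (base case from \eqref{eq:rel_partial_x_h_intro}) gives $\mathbb{E}[\|\partial_x z^\nu_t\|^{2p}_{L^2}] \lesssim_p \|\partial_x h\|^{2p}_{L^2}$ for every integer $p$; real exponents $p \in [2,+\infty)$ follow by Jensen's inequality in $\omega$. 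For the time-uniform estimate \eqref{eq:condition_stationarity_nu}, I would take $\sup_{t \leq T}$ in the It\^o expansion of $M_t^p$, bound the drift by its stationary moments times $T$, and handle the martingale term via Burkholder--Davis--Gundy, absorbing $\mathbb{E}[\sup_{t \leq T} M_t^p]$ on the left through Young's inequality.

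The main obstacle is the Poincar\'e inequality. The stationary It\^o balance directly sees only the tangential Laplacian $\|z^\nu \times \partial_x^2 z^\nu\|_{L^2}$, which carries no immediate information on $\|\partial_x z^\nu\|_{L^2}$ without exploiting the geometry: the spherical constraint (making $|\phi| = |\partial_x z^\nu|$) together with the Neumann boundary conditions (pinning $\phi$ to zero on $\partial D$) are precisely what one needs to close the loop. Without either ingredient, the moment recursion cannot even be initialized; the remaining computations are essentially routine once this geometric step is in hand.
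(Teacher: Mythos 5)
Your proposal is correct and follows essentially the same route as the paper: It\^o's formula for powers of $\|\partial_x z^\nu_t\|_{L^2}^2$, stationarity to kill the time derivative, Cauchy--Schwarz on the quadratic variation, and the Poincar\'e inequality applied to $z^\nu\times\partial_x z^\nu\in H^1_0$ (whose derivative is $z^\nu\times\partial_x^2 z^\nu$) to close the estimate, with BDG for the supremum bound. The only cosmetic difference is that you close the moment bound by induction on integer powers plus Jensen, whereas the paper absorbs via a weighted Young inequality directly for all real $p\geq 2$; both are fine.
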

	
	\begin{proof}
		We fix $g\in C^2(\mathbb{R};\mathbb{R})$ and we describe the evolution of $g(\|\partial_x z^\nu_t\|^2_{L^2})$ by It\^o's formula as,
		\begin{align*}
		g(\|\partial_x z^\nu_t\|^2_{L^2})&= g(\|\partial_x z^\nu_s\|^2_{L^2})-2\nu\int_s^t g'(\|\partial_x z^\nu_r\|^2_{L^2}) \|z^\nu_r\times \partial_x^2 z^\nu_r\|^2_{L^2} \dd r+ 2\nu \int_s^t g'(\|\partial_x z^\nu_r\|^2_{L^2})\|\partial_x h\|^2_{L^2}\dd r \\
		&\quad+2\int_{s}^{t} g'(\|\partial_x z^\nu_r\|^2_{L^2}) \left(\partial_x z^\nu_r,\sqrt{\nu} \partial_xh z^\nu_r\times \dd W_r\right)+\frac{ 1}{2} \int_s^t g''(\|\partial_x z^\nu_r\|^2_{L^2}) \dd\langle \langle \|\partial_x z^\nu_{\cdot}\|^2_{L^2}\rangle\rangle_r\, .
		\end{align*}
		The quadratic variation $\langle \langle \|\partial_x z^\nu_t\|^2_{L^2} \rangle\rangle_t$ reads
		\begin{align*}
		\langle \langle \|\partial_x z^\nu_\cdot\|^2_{L^2} \rangle \rangle_{t} &= \langle \langle 2\int_0^{\cdot} \int_{D}\sqrt{\nu} \partial_x h z^\nu_r\times \partial_x z^\nu_r\cdot  \dd W_r \rangle \rangle_{t}\\
		&=4\nu\int_0^t \left|\int_{D} \partial_x h z^\nu_r\times \partial_x z^\nu_r\dd x\right|^2\dd r\, .
		\end{align*}
		By choosing $g(x)=x^{p/2}$ for $p\geq 2$, $g'(x)=px^{p/2-1}/2$ and $g''(x)=p/2(p/2-1)x^{p/2-2}$, and from Lemma \ref{lemma:ito_formula_square} (It\^o's formula for $p=2$)
		\begin{align*}
		&\|\partial_x z^\nu_t\|_{L^2}^p=\|\partial_x z^\nu_0\|_{L^2}^p- p\nu\int_{0}^{t} \|\partial_x z^\nu_r\|_{L^2}^{p-2} \|z^\nu_r\times \partial^2_x z^\nu_r\|_{L^2}^2  \dd r +p\nu\int_{0}^{t} \|\partial_x z^\nu_r\|_{L^2}^{p-2}\|\partial_x h\|^2_{L^2}\dd r\\
		&\quad+ p\int_{0}^{t} \langle \|\partial_x z^\nu_r\|_{L^2}^{p-2}  \partial_x z^\nu_r , \sqrt{\nu} \partial_x h   z^\nu_r  \times  \dd W_r\rangle +\frac{ \nu p(p-2)}{2}\int_0^t \|\partial_x z^\nu_r\|_{L^2}^{p-4}\left|\int_{D} \partial_x h z^\nu_r\times \partial_x z^\nu_r\dd x\right|^2 \dd r\, .
		\end{align*}
		As a consequence of the elementary equality $(a\times b)\cdot a=0$, for all $a,b\in \mathbb{R}^3$, the following integral is $0$ and does not appear in the above equality
		\begin{align*}
		p\int_{0}^{t} \langle \|\partial_x z^\nu_r\|_{L^2}^{p-2}  \partial_x z^\nu_r , \sqrt{\nu}  h   \partial_x z^\nu_r  \times  \dd W_r\rangle=0\, .
		\end{align*}
		From the stationarity of $z^\nu_r$ and by applying expectation, we obtain for all $t\geq 0$
		\begin{align}\label{eq:00}
		p\nu t\mathbb{E}[\|\partial_x z^\nu_t\|^{p-2}_{L^2}\|z^\nu_t\times \partial^2_x z^\nu_t\|_{L^2}^2 ]&=\nu p t\mathbb{E}[\|\partial_x z^\nu_t\|^{p-2}_{L^2}\|\partial_x h\|^2_{L^2}]\\
		&\quad+\frac{ \nu p(p-2)t}{2}\mathbb{E}\left[\|\partial_x z^\nu_t\|_{L^2}^{p-4}\left|\int_{D} \partial_x h z^\nu_t\times \partial_x z^\nu_t\dd x\right|^2 \right] \nonumber\, ,
		\end{align}
		where we used that the stochastic integral is a martingale. From the Cauchy-Schwarz inequality,
		\begin{align*}
			\left|\int_{D} \partial_x h z^\nu_t\times \partial_x z^\nu_t\dd x\right|^2\leq \|\partial_x h\|_{L^2}^2\|z^\nu_t\times \partial_x z^\nu_t\|_{L^2}^2=\|\partial_x h\|_{L^2}^2\|\partial_x z^\nu_t\|_{L^2}^2\, .
		\end{align*}
		From the Poincaré's inequality and Lemma \ref{lemma:u_dot_nablau_0},
		\begin{align}\label{eq:01}
		\| \partial_x z^\nu_t\|_{L^2}^2=\|z^\nu_t\times \partial_x z^\nu_t\|_{L^2}^2 \lesssim_{D} \|z^\nu_t\times \partial^2_x z^\nu_t\|_{L^2}^2 \, .
		\end{align} 
		By inserting \eqref{eq:01} into \eqref{eq:00}, for all $\epsilon>0$
		\begin{align*}
		\mathbb{E}[\|\partial_x z^\nu_t\|^{p}_{L^2}]\lesssim \mathbb{E}[\|\partial_x z^\nu_t\|^{p-2}_{L^2}\|\partial_x h\|^2_{L^2}] \lesssim_p \epsilon\mathbb{E}[\|\partial_x z^\nu_t\|_{L^2}^{p}]+C_\epsilon\|\partial_x h\|^p_{L^2}\, ,
		\end{align*}
		where we have used the weighted Young's inequality. By absorbing the term $\mathbb{E}[\|\partial_x z^\nu_t\|^{p}_{L^2}]$ for a small $\epsilon$, we obtain the uniform estimate for all $p\geq 2$
		\begin{align*}
		\mathbb{E}[\|\partial_x z^\nu_r\|^{p}_{L^2}]\lesssim_{D,p} \|\partial_x h\|^{p}_{L^2}\, .
		\end{align*}
		Finally, estimate \eqref{eq:condition_stationarity_nu} follows from the previous uniform bound and the Burkholder-Davis-Gundy inequality.
	\end{proof}
	We recall a well known fundamental equality for the LLG equation.
	\begin{lemma}\label{lemma:fund_equality}
		For every $\nu\in(0,1]$, it holds for a.e. $t\geq 0$ and $\mathbb{P}$-a.s. that
		\begin{align} \label{eq:fundamental_equality}
		\|\partial^2_x z^\nu_t\|^2_{L^2}=	\|z^\nu_t\times \partial^2_x z^\nu_t\|^2_{L^2}+\|\partial_x z^\nu_t\|^4_{L^4}\, .
		\end{align}
	\end{lemma}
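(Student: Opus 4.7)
The plan is to exploit the pointwise constraint $|z^\nu_t(x)|_{\mathbb{R}^3}=1$ to decompose $\partial_x^2 z^\nu_t$ into its component parallel to $z^\nu_t$ and its component tangent to $\mathbb{S}^2$, and then integrate. The parallel component will be controlled by $\|\partial_x z^\nu_t\|_{L^4}^4$, while the tangential component will give $\|z^\nu_t\times \partial_x^2 z^\nu_t\|_{L^2}^2$.

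First I would use the regularity $z^\nu_t\in H^2\hookrightarrow C^1$ together with the constraint $|z^\nu_t(x)|^2=1$ to differentiate twice in $x$. The first differentiation recovers $z^\nu_t\cdot \partial_x z^\nu_t=0$, which is exactly the content of Lemma \ref{lemma:u_dot_nablau_0}. Differentiating once more yields, for a.e. $x\in D$,
\begin{equation*}
z^\nu_t(x)\cdot \partial_x^2 z^\nu_t(x)=-|\partial_x z^\nu_t(x)|_{\mathbb{R}^3}^2.
\end{equation*}

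Next I would apply the standard vector identity $|a\times b|^2=|a|^2|b|^2-(a\cdot b)^2$ with $a=z^\nu_t(x)$ (which has unit length) and $b=\partial_x^2 z^\nu_t(x)$ to obtain
\begin{equation*}
|z^\nu_t(x)\times \partial_x^2 z^\nu_t(x)|^2=|\partial_x^2 z^\nu_t(x)|^2-(z^\nu_t(x)\cdot\partial_x^2 z^\nu_t(x))^2=|\partial_x^2 z^\nu_t(x)|^2-|\partial_x z^\nu_t(x)|^4,
\end{equation*}
for a.e. $x\in D$. Integrating in $x$ and rearranging produces the claimed identity.

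The only subtlety is justifying the second differentiation of $|z^\nu_t|^2=1$. Since $z^\nu_t\in H^2(D;\mathbb{R}^3)$ and $\partial_x z^\nu_t\in H^1\hookrightarrow C^0$, the product $|\partial_x z^\nu_t|^2$ lies in $H^1$ and $z^\nu_t\cdot\partial_x^2 z^\nu_t$ is in $L^2$, so the pointwise identity $z^\nu_t\cdot \partial_x^2 z^\nu_t=-|\partial_x z^\nu_t|^2$ holds a.e. This is the only place where care is required; the rest is a linear-algebraic identity applied pointwise.
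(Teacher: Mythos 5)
Your argument is correct and is essentially the paper's proof in slightly more self-contained form: the paper obtains the same pointwise identity by citing the formula $-u\times(u\times \partial_x^2 u)=\partial_x^2 u+u|\partial_x u|^2$ from the Brze\'zniak--Goldys--Jegaraj paper, testing it against $\partial_x^2 z^\nu_t$, and using $a\times(a\times b)\cdot b=-|a\times b|^2$ together with $z^\nu\cdot\partial_x^2 z^\nu=-|\partial_x z^\nu|^2$, which is exactly your Lagrange-identity computation written in weak form. Your extra care about differentiating the constraint twice at $H^2$ regularity is a welcome addition, since the paper asserts that identity without comment.
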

	\begin{proof}
		From e.g. \cite[Lemma 2.3]{brzezniak_LDP}, for all $u\in H^2(\mathbb{S}^2)$ so that $|u(x)|_{\mathbb{R}^3}=1$  it holds for all $\phi \in L^2$ that
		\begin{align}\label{eq:u_cross_u_cross_Laplace}
		-\langle u\times (u\times \partial_x^2 u), \phi\rangle =\langle \partial_x^2 u+u|\partial_x u|^2 , \phi\rangle\, .
		\end{align}
		By applying this result to $z^\nu_t$ for a.e. $(t,\omega)\in [0,T]\times \Omega$ and recalling that $z^\nu_t\in H^2$ a.e., 
		\begin{align*}
		-\langle z^\nu_t\times (z^\nu_t\times \partial_x^2 z^\nu_t), \partial_x^2 z^\nu_t\rangle =\langle \partial_x^2 z^\nu_t+z^\nu_t|\partial_x z^\nu_t|^2 , \partial_x^2 z^\nu_t\rangle\, .
		\end{align*}
		From the elementary relation $a\times (a\times b)\cdot b=-|a\times b|^2_{\mathbb{R}^2}$ for all $a,b\in \mathbb{R}^3$ and the fact that $z^\nu\cdot \partial_x^2 z^\nu =-|\partial_x z^\nu|^2$ a.s., the claim follows.
	\end{proof}
	As a consequence of  Lemma \ref{lemma:u_dot_nablau_0}, Lemma \ref{lemma:a_priori_bound_derviative_p} and Lemma \ref{lemma:fund_equality}, we obtain the following a priori bounds.
	\begin{lemma} \label{cor:laplaciano_nu}The uniform bounds hold for all $t\geq 0$
		\begin{align*}
		\quad\sup_\nu\mathbb{E}[\|\partial_x z^\nu_t\|^4_{L^4}]\leq C\, , \quad	\sup_\nu\mathbb{E}[\|\partial^2_x z^\nu_t\|^2_{L^2}]\leq C\, ,
		\end{align*}
		where the constant $C=C(T,h)>0$ is independent on $\nu\in(0,1]$.
	\end{lemma}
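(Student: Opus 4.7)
The plan is to use the pointwise identity of Lemma \ref{lemma:fund_equality}, namely
$$\|\partial^2_x z^\nu_t\|^2_{L^2} = \|z^\nu_t \times \partial^2_x z^\nu_t\|^2_{L^2} + \|\partial_x z^\nu_t\|^4_{L^4},$$
as the backbone of the argument. Since both summands on the right are non-negative, controlling the expectation of the left-hand side uniformly in $\nu$ will give both claimed bounds simultaneously. The first summand will be handled by a direct stationarity argument; the second requires an interpolation inequality followed by an absorption step.

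First I would specialise the It\^o computation in the proof of Lemma \ref{lemma:a_priori_bound_derviative_p} to the case $p=2$, i.e.~apply It\^o's formula to $\|\partial_x z^\nu_t\|^2_{L^2}$ (taking $g(x)=x$). The quadratic variation correction is proportional to $p(p-2)$ and thus vanishes; taking expectations and using that $t\mapsto \mathbb{E}[\|\partial_x z^\nu_t\|^2_{L^2}]$ is constant by stationarity together with the martingale property of the stochastic integral yields the clean identity
$$\mathbb{E}[\|z^\nu_t \times \partial^2_x z^\nu_t\|^2_{L^2}] = \|\partial_x h\|^2_{L^2}, \qquad \forall\, \nu \in (0,1],$$
which is already uniform in $\nu$.

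For the $L^4$-norm of the gradient, I would invoke the one-dimensional Gagliardo--Nirenberg inequality (from Appendix \ref{sec:appendix_a}) applied to $\partial_x z^\nu_t$, schematically $\|f\|^4_{L^4} \lesssim \|f\|^4_{L^2} + \|f\|^3_{L^2}\|\partial_x f\|_{L^2}$, and convert the mixed term by Young's inequality with a small parameter $\varepsilon>0$ into $C_\varepsilon \|\partial_x z^\nu_t\|^6_{L^2} + \varepsilon \|\partial^2_x z^\nu_t\|^2_{L^2}$. Inserting Lemma \ref{lemma:fund_equality} on the right to rewrite $\|\partial^2_x z^\nu_t\|^2_{L^2}$ and choosing $\varepsilon$ small enough to absorb $\varepsilon \|\partial_x z^\nu_t\|^4_{L^4}$ into the left-hand side produces
$$\|\partial_x z^\nu_t\|^4_{L^4} \lesssim \|\partial_x z^\nu_t\|^4_{L^2} + \|\partial_x z^\nu_t\|^6_{L^2} + \|z^\nu_t \times \partial^2_x z^\nu_t\|^2_{L^2}.$$

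Taking expectations and using Lemma \ref{lemma:a_priori_bound_derviative_p} with $p=4$ and $p=6$ together with the identity from the previous step gives $\sup_\nu \mathbb{E}[\|\partial_x z^\nu_t\|^4_{L^4}] \leq C$; feeding this back into the fundamental equality then delivers $\sup_\nu \mathbb{E}[\|\partial^2_x z^\nu_t\|^2_{L^2}] \leq C$. The main technical point is the absorption: because Lemma \ref{lemma:fund_equality} expresses $\|\partial^2_x z^\nu\|^2_{L^2}$ itself in terms of $\|\partial_x z^\nu\|^4_{L^4}$, the two unknown quantities are intertwined and one must break the circularity by taking the Young parameter strictly below the implicit Gagliardo--Nirenberg constant. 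The role of Lemma \ref{lemma:u_dot_nablau_0} is implicit, entering through the derivation of the $\nu$-independent identity (and through the Poincar\'e step already used in Lemma \ref{lemma:a_priori_bound_derviative_p}).
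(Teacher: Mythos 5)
Your proposal is correct: the stationarity identity $\mathbb{E}[\|z^\nu_t\times\partial_x^2 z^\nu_t\|^2_{L^2}]=\|\partial_x h\|^2_{L^2}$ you rederive is exactly \eqref{eq:equality_u_cross_laplacian}, and combining it with Lemma \ref{lemma:fund_equality}, the Ladyzhenskaya inequality and the moment bounds of Lemma \ref{lemma:a_priori_bound_derviative_p} is precisely the paper's strategy. The one place where you diverge is the treatment of $\|\partial_x z^\nu_t\|^4_{L^4}$: you interpolate $\partial_x z^\nu_t$ itself, which brings in the uncontrolled quantity $\|\partial_x^2 z^\nu_t\|_{L^2}$ and forces you to re-substitute the fundamental equality and break the resulting circularity by an $\varepsilon$-absorption. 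The paper instead first uses Lemma \ref{lemma:u_dot_nablau_0} to write $\|\partial_x z^\nu_t\|^4_{L^4}=\|z^\nu_t\times\partial_x z^\nu_t\|^4_{L^4}$ and applies Ladyzhenskaya to $z^\nu_t\times\partial_x z^\nu_t$, whose spatial derivative is $z^\nu_t\times\partial_x^2 z^\nu_t$; this lands directly on the quantity controlled by \eqref{eq:equality_u_cross_laplacian}, so no absorption is needed and the estimate closes in one line via Young's inequality. Both routes are valid (your absorption is performed pathwise on a finite quantity, since $z^\nu_t\in H^2$ a.e., so it is legitimate); the paper's choice of which function to interpolate simply buys a shorter, non-circular argument.
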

\begin{proof} 
From Lemma \ref{lemma:u_dot_nablau_0}, the equality $|\partial_x z^\nu_t(x)|_{\mathbb{R}^3}=|z^\nu_t(x)\times \partial_x z^\nu_t(x)|_{\mathbb{R}^3}$ holds for a.e. $(t,x)\in [0,T]\times D$ $\mathbb{P}$-a.s. The Ladyzhenskaya's inequality in one dimension (see Lemma \ref{lemma:interp_ladyz}) implies
\begin{align*}
	\|\partial_x z^\nu_t\|^4_{L^4}=\|z^\nu_t\times \partial_x z^\nu_t\|^4_{L^4}
	&\lesssim \|\partial_x z^\nu_t\|_{L^2}^3\|z^\nu_t\times \partial^2_x z^\nu_t\|_{L^2}\\
	&\lesssim \|\partial_x z^\nu_t\|_{L^2}^6+ \|z^\nu_t\times \partial^2_x z^\nu_t\|_{L^2}^2\, .
\end{align*}
By taking expectation in \eqref{eq:fundamental_equality}, by using \eqref{eq:equality_u_cross_laplacian} (where we remark that the constant appearing is independent on $\nu$) and Lemma \ref{lemma:a_priori_bound_derviative_p},  the assertion follows.
\end{proof}
We look now at the time regularity.
\begin{lemma}\label{lemma:time_regularity}
	The sequence $(z^\nu)_{\nu}$ is uniformly bounded in $\mathcal{L}^2(\Omega;C^{\alpha}([0,T];L^2))$, for all $\alpha\in (1/3,1/2)$.
\end{lemma}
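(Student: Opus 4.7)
The plan is to apply the Kolmogorov continuity criterion to a decomposition of $z^\nu$ into a Bochner drift and an It\^o martingale. I would first rewrite the Stratonovich equation \eqref{LLG_nu} in It\^o form and decompose, for $0\leq s<t\leq T$,
\begin{align*}
z^\nu_t - z^\nu_s = A^\nu(s,t) + M^\nu(s,t)\, ,\qquad M^\nu(s,t):=\sqrt{\nu}\int_s^t h\, z^\nu_r \times \dd W_r\, ,
\end{align*}
where $A^\nu(s,t)=\int_s^t F^\nu_r\dd r$ collects the original drift, the dissipative term (rewritten via $z^\nu\times[z^\nu\times\partial_x^2 z^\nu]=-\partial_x^2 z^\nu - z^\nu|\partial_x z^\nu|^2$ as in Lemma \ref{lemma:fund_equality}), and the It\^o--Stratonovich correction. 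A short calculation along the lines of Appendix \ref{sec:appendix_b} shows that, since $|z^\nu_r|_{\mathbb{R}^3}=1$ pathwise and $h\in W^{1,\infty}(D;\mathbb{R})$, the correction is of the form $c\,\nu h^2 z^\nu_r$ for a universal constant $c$, in particular a Bochner drift whose integrand is bounded in $L^2$ uniformly in $\nu\in(0,1]$.

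For $A^\nu$, Cauchy--Schwarz in the time integral gives
\begin{align*}
\sup_{0\leq s<t\leq T}\frac{\|A^\nu(s,t)\|_{L^2}^2}{|t-s|}\leq \int_0^T \|F^\nu_r\|_{L^2}^2\dd r\, .
\end{align*}
Taking expectation, Fubini together with the stationarity of $z^\nu$ reduce the right-hand side to $T\,\mathbb{E}\|F^\nu_0\|_{L^2}^2$. This quantity is bounded uniformly in $\nu$ thanks to Proposition \ref{pro:a_priori_approx_nu} (which controls $\mathbb{E}\|z^\nu_0\times\partial_x^2 z^\nu_0\|_{L^2}^2$), Corollary \ref{cor:laplaciano_nu} (which controls $\mathbb{E}\|\partial_x^2 z^\nu_0\|_{L^2}^2$ and $\mathbb{E}\|\partial_x z^\nu_0\|_{L^4}^4$), and the spherical constraint. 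This yields a uniform-in-$\nu$ bound for $A^\nu$ in $\mathcal{L}^2(\Omega;C^{1/2}([0,T];L^2))$, hence in $\mathcal{L}^2(\Omega;C^\alpha([0,T];L^2))$ for every $\alpha\leq 1/2$ via the continuous embedding $C^{1/2}\hookrightarrow C^\alpha$ on $[0,T]$.

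For the martingale $M^\nu$, the sphere constraint $|z^\nu_r(x)|_{\mathbb{R}^3}=1$ gives $\|h\,(z^\nu_r\times e_i)\|_{L^2}\leq\|h\|_{L^2}$ for each canonical vector $e_i\in\mathbb{R}^3$, so the Hilbert-valued Burkholder--Davis--Gundy inequality delivers, for every $p\geq 2$,
\begin{align*}
\mathbb{E}\|M^\nu(s,t)\|_{L^2}^p\leq C_p\,\nu^{p/2}\,\|h\|_{L^2}^p\,|t-s|^{p/2}\, ,
\end{align*}
uniformly in $\nu\in(0,1]$. The Kolmogorov continuity criterion then produces a uniform bound on $\mathbb{E}\|M^\nu\|^p_{C^\alpha([0,T];L^2)}$ for every $\alpha<1/2-1/p$; given $\alpha\in(1/3,1/2)$, I would fix $p$ with $p>2/(1-2\alpha)$, and Jensen's inequality downgrades the $\mathcal{L}^p$ bound to the required $\mathcal{L}^2$ bound. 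Combining the two estimates together with the trivial bound $\|z^\nu_0\|_{L^2}\leq|D|^{1/2}$ concludes the proof. The only delicate point is the explicit identification of the It\^o--Stratonovich correction and verifying the uniform $L^2$-bound on its integrand; this, however, is immediate from $|z^\nu|_{\mathbb{R}^3}=1$ and $h\in W^{1,\infty}$.
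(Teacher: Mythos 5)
Your proposal is correct and follows essentially the same route as the paper: rewrite \eqref{LLG_nu} in It\^o form, bound the Bochner drift using Cauchy--Schwarz in time together with the stationary a priori estimates, and treat the martingale part with the Burkholder--Davis--Gundy inequality followed by the Kolmogorov continuity criterion (with $p$ large and Jensen to return to $\mathcal{L}^2(\Omega)$). The only (harmless, slightly cleaner) deviation is that you obtain the $C^{1/2}$ bound for the drift pathwise from Cauchy--Schwarz, whereas the paper also runs the drift through Kolmogorov's theorem and records only a $C^{1/2-}$ bound.
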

\begin{proof}
	To show that $(z^\nu)_\nu$ is uniformly bounded in $\mathcal{L}^2(\Omega;C^{\alpha}([0,T];L^2))$, we prove that the drift is in $\mathcal{L}^2(\Omega;C^{1/2-}([0,T];L^2))$ and the stochastic integral lies in $\mathcal{L}^2(\Omega;C^{\alpha}([0,T];L^2))$, for $\alpha\in (1/3,1/2)$.
	We write the stochastic LLG equation with respect to the It\^o integration theory: we refer to Lemma \ref{lemma:ito_stratonovich_correction} for the computations of the It\^o-Stratonovich correction term.
For all $s\leq t\in [0,T]$, by taking the $L^2$-norm of the drift,
\begin{align*}
	&\mathbb{E}\left\|\int_{s}^{t}\left[ z^\nu_r\times \partial_x^2 z^\nu_r-\nu z^\nu_r\times (z^\nu_r\times \partial_x^2 z^\nu_r)  -\nu h^2 z^\nu_r\right]\dd r \right \|^2_{L^2}\\
	&\quad\lesssim (t-s)\int_{s}^{t}[2\mathbb{E}[\|z^\nu_r\times \partial_x^2 z^\nu_r\|^2_{L^2}]+\mathbb{E}[\|h^2 z^\nu_r\|^2_{L^2}]]\dd r\\
	&\quad\lesssim (t-s)^2[2\|\partial_x h\|^2_{L^2}+\|h^2\|^2_{L^2}]\, .
\end{align*}
The Kolmogorov continuity theorem (see e.g. \cite[Theorem 2.3.11]{martina_book}) implies that the drift of the equation associated to $(z^\nu)_\nu$ is uniformly bounded in $\mathcal{L}^2(\Omega;C^{1/2-}([0,T];L^2))$. We estimate the stochastic integral. From Burkholder-Davis-Gundy's inequality and from Jensen's inequality, for $p>2$ and $\nu\in (0,1]$
\begin{align*}
\mathbb{E}\left\|\sqrt{\nu}\int_s^t h z^\nu_r\times \dd W_r\right\|^p_{L^2}
&\lesssim\mathbb{E}\left[\int_{s}^{t} \|hz^\nu_r\|^2_{L^2}\dd r\right]^{\frac{p}{2}} \\
&\lesssim(t-s)^{\frac{p}{2}-1}\int_{s}^{t}\mathbb{E}\left[ \|hz^\nu_r\|^{p}_{L^2}\right]\dd r\leq\|h\|^{p}_{L^2} (t-s)^{\frac{p}{2}}\, .
\end{align*}
	 From the Kolmogorov continuity theorem, there exists a modification of each stochastic integral in $z^\nu$ such that has $\mathbb{P}$-a.s.~H\"older continuous trajectories with exponent $\alpha\in (0,1/2)$ and such that, uniformly in $\nu$, the bound holds
	\begin{align*}
	\mathbb{E}\left[\left\|\sqrt{\nu}\int_0^\cdot h z^\nu_r\times\dd W_r\right\|_{C^{\alpha}(L^2)}^2\right]\lesssim  \|h\|^2_{L^2}\, .
	\end{align*}
	In conclusion, $(z^\nu)_\nu$ is uniformly bounded in $\mathcal{L}^2(\Omega;C^{\alpha}([0,T];L^2))$, for $\alpha \in (0,1/2)$.
\end{proof}

\subsection{Some Lemmata on the space average of the solution.}
For all $f\in L^1(D;\mathbb{R})$, we denote the space average by
	\begin{align*}
		\langle f\rangle :=\frac{1}{|D|}\int_D f(x)\dd x\, .
	\end{align*}
	We introduce a fundamental inequality to show the genuine randomness of the statistically stationary solutions.
\begin{lemma}\label{lemma:stochasticity_partial_x_h_0}
	Let $h\neq 0,\,\partial_x h=0$. For all fixed $\nu\in(0,1]$, let $z^\nu$ be a stationary solution to \eqref{LLG_nu}. Then for all $t\geq 0$, the equality holds
	\begin{align*}
	h^2 \mathbb{E}[z^\nu_t]=h^2 \mathbb{E}[\langle z^\nu_t\rangle]=0\, .
	\end{align*}
\end{lemma}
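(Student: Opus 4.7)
The starting observation is that Lemma \ref{lemma:a_priori_bound_derviative_p} provides the estimate $\mathbb{E}[\|\partial_x z^\nu_t\|_{L^2}^p] \lesssim_p \|\partial_x h\|_{L^2}^p$ \emph{without any stray additive constant}, so the hypothesis $\partial_x h = 0$ forces $\mathbb{E}[\|\partial_x z^\nu_t\|_{L^2}^p] = 0$ for every $t\ge 0$ and every $p\ge 2$. In particular $\partial_x z^\nu_t = 0$ $\mathbb{P}$-a.s.~for each fixed $t$, so the field $x\mapsto z^\nu_t(x,\omega)$ is almost surely constant in $x$ and therefore coincides with its space average $\langle z^\nu_t(\omega)\rangle$. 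This already proves the first equality $h^2\mathbb{E}[z^\nu_t] = h^2\mathbb{E}[\langle z^\nu_t\rangle]$ pointwise in $x$; the factor $h^2$ is in fact immaterial for this part.

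For the second equality, the identity $\partial_x z^\nu\equiv 0$ makes both the dispersive drift $z^\nu\times\partial_x^2 z^\nu$ and the dissipative drift $z^\nu\times(z^\nu\times\partial_x^2 z^\nu)$ vanish identically, so equation \eqref{LLG_nu} collapses to the $\mathbb{S}^2$-valued SDE
\begin{align*}
z^\nu_t = z^\nu_0 + \sqrt{\nu}\,h\int_0^t z^\nu_r\times \circ\,\dd W_r,
\end{align*}
namely a (rescaled) spherical Brownian motion. The plan is then to rewrite this SDE in It\^o form using the Stratonovich-to-It\^o correction already isolated in Lemma \ref{lemma:time_regularity}, which contributes the extra drift $-\nu h^2 z^\nu_r\,\dd r$. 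Taking expectation and invoking the martingale property of the It\^o integral gives
\begin{align*}
\mathbb{E}[z^\nu_t] = \mathbb{E}[z^\nu_0] - \nu h^2\int_0^t \mathbb{E}[z^\nu_r]\,\dd r.
\end{align*}
By stationarity, $\mathbb{E}[z^\nu_r]$ is independent of $r$; denoting this common constant vector by $M$, the identity reduces to $\nu h^2 t\,M = 0$ for every $t>0$, and since $\nu>0$ this forces $h^2 M = 0$, i.e.\ $h^2\mathbb{E}[z^\nu_t] = 0$, as desired.

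The only genuinely delicate ingredient in this plan is the first step: the whole argument hinges on the \emph{sharp} (constant-free) form of the a priori estimate in Lemma \ref{lemma:a_priori_bound_derviative_p}, which in turn relies on the Poincar\'e-type inequality used inside its proof and hence on the null Neumann boundary conditions. Once $\partial_x z^\nu\equiv 0$ has been established, the SDE degenerates into a spherical Brownian motion and the remainder of the proof is a routine linear computation on the mean.
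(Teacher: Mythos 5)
Your proposal is correct and follows essentially the same route as the paper: deduce $\partial_x z^\nu_t=0$ a.s.\ from the constant-free bound in Lemma \ref{lemma:a_priori_bound_derviative_p}, observe that the drift terms vanish so the equation reduces to the It\^o SDE with correction $-\nu h^2 z^\nu_r$, and then take expectations and use stationarity together with the martingale property of the It\^o integral. The only cosmetic difference is that the It\^o--Stratonovich correction is stated in Lemma \ref{lemma:ito_stratonovich_correction} rather than in Lemma \ref{lemma:time_regularity}, which merely uses it.
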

\begin{proof} What follows holds for all $t\geq 0$. We consider the stochastic LLG equation in the It\^o's formulation (see Lemma \ref{lemma:ito_stratonovich_correction}).
	If $\partial_x h=0$, then $\partial_x z^\nu_t=0$ $\mathbb{P}$-a.s. from Lemma \ref{lemma:a_priori_bound_derviative_p}. Thus for all $t\geq 0$ and for a.e. $x\in D$:
	\begin{align*}
	 z^\nu_t-z^\nu_s= \sqrt{\nu}\int_{s}^{t} h z^\nu_r\times \dd W_r-\nu \int_{s}^{t} h^2 z^\nu_r \dd r\, .
	\end{align*}
	
	We take the expectation and, from the stationarity of the solutions, we obtain the equality
	\begin{align*}
	\nu \mathbb{E}[h^2 z^\nu_t]=\nu h^2\mathbb{E}[ z^\nu_t]=0\, .
	\end{align*}
\end{proof}
 We need to show the following identity to prove that the solutions are non-trivial in space an time.
\begin{lemma}\label{lemma:condition_for_non_trivial} Let $h\in W^{1,\infty}(D;\mathbb{R})$ and $z^\nu$ be a stationary solution to \eqref{LLG_nu}. For every $t\geq 0$ and for all $\nu\in(0,1]$, the equality holds
	\begin{align*}
	\mathbb{E}[\langle z^\nu_t|\partial_x z^\nu_t|^2\rangle \cdot \langle z^\nu_t\rangle]-\mathbb{E}[\langle h^2 z^\nu_t \rangle\cdot \langle z^\nu_t\rangle]+\mathbb{E}[|\langle  h z^\nu_t\rangle |^2]=0\, .
	\end{align*}
	Let $u^\nu$ be a stationary solution to \eqref{mod_LLG_nu}. For every $t\geq 0$ and for all $\nu\in(0,1]$, the equality holds
	\begin{align*}
	\mathbb{E}[\langle u^\nu_t|\partial_x u^\nu_t|^2\rangle \cdot \langle u^\nu_t\rangle]-\mathbb{E}[\langle h^2 u^\nu_t \rangle\cdot \langle u^\nu_t\rangle]+\mathbb{E}[|\langle  h u^\nu_t\rangle |^2]=0\, .
	\end{align*}
\end{lemma}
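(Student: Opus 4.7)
The plan is to derive a closed equation for the spatial average $\langle z^\nu_t\rangle$, apply It\^o's formula to its squared Euclidean norm, and then exploit the stationarity of $z^\nu$ to force the drift to vanish in expectation.

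First I would rewrite \eqref{LLG_nu} in It\^o form. Using the identity $u\times(u\times\partial_x^2 u)=-\partial_x^2 u-u|\partial_x u|^2$ valid on $\mathbb{S}^2$ (as in the proof of Lemma \ref{lemma:fund_equality}) and the It\^o--Stratonovich correction associated to the cross-product noise (see Lemma \ref{lemma:ito_stratonovich_correction}), which contributes $-\nu h^2 z^\nu_r$, one gets
\begin{align*}
z^\nu_t=z^\nu_0+\int_0^t\!\!\bigl[z^\nu_r\times\partial_x^2 z^\nu_r+\nu\partial_x^2 z^\nu_r+\nu z^\nu_r|\partial_x z^\nu_r|^2-\nu h^2 z^\nu_r\bigr]\dd r+\sqrt{\nu}\int_0^t h z^\nu_r\times \dd W_r.
\end{align*}
Next I would integrate over $D$ and divide by $|D|$. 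Integration by parts together with the null Neumann boundary conditions kills two terms: $\int_D z\times\partial_x^2 z\,\dd x=\int_D \partial_x z\times\partial_x z\,\dd x=0$, and $\int_D \partial_x^2 z\,\dd x=0$. The multiplicative noise simplifies because $W$ is spatially constant, so $\langle hz^\nu_r\times \dd W_r\rangle=\langle hz^\nu_r\rangle\times \dd W_r$. This yields the $\mathbb{R}^3$-valued SDE
\begin{align*}
\langle z^\nu_t\rangle=\langle z^\nu_0\rangle+\nu\!\int_0^t\!\!\bigl[\langle z^\nu_r|\partial_x z^\nu_r|^2\rangle-\langle h^2 z^\nu_r\rangle\bigr]\dd r+\sqrt{\nu}\!\int_0^t\!\langle hz^\nu_r\rangle\times \dd W_r.
\end{align*}

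The key step is then It\^o's formula applied to $t\mapsto|\langle z^\nu_t\rangle|^2$. For the quadratic variation, writing $v\times \dd W$ as the action of the skew-symmetric cross-product matrix $[v]_\times$, I would use $\mathrm{tr}([v]_\times[v]_\times^\top)=2|v|^2$ to obtain $\dd\langle\langle |\langle z^\nu_\cdot\rangle|^2\rangle\rangle_t=2\nu|\langle hz^\nu_t\rangle|^2\dd t$. Taking expectation annihilates the stochastic integral, and stationarity gives $\mathbb{E}[|\langle z^\nu_t\rangle|^2]=\mathbb{E}[|\langle z^\nu_0\rangle|^2]$ for all $t$, so the right-hand side integrand must vanish identically. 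Dividing by $2\nu t>0$ yields the first identity.

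For the modified equation \eqref{mod_LLG_nu} the same procedure applies with the noise coefficient $\sqrt{\nu}h$ replaced by $\sqrt{\nu}h+1$; the It\^o correction is $-(\sqrt{\nu}h+1)^2 u^\nu$ and the quadratic variation term is $2|\langle(\sqrt{\nu}h+1)u^\nu_t\rangle|^2$. Expanding $(\sqrt{\nu}h+1)^2=\nu h^2+2\sqrt{\nu}h+1$ and $|\langle(\sqrt{\nu}h+1)u^\nu\rangle|^2=\nu|\langle hu^\nu\rangle|^2+2\sqrt{\nu}\langle u^\nu\rangle\cdot\langle hu^\nu\rangle+|\langle u^\nu\rangle|^2$, the $|\langle u^\nu\rangle|^2$ terms and the cross terms $2\sqrt{\nu}\langle u^\nu\rangle\cdot\langle hu^\nu\rangle$ cancel exactly; dividing what remains by $\nu$ yields the second identity. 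No genuine obstacle is expected: the main care lies in computing the It\^o--Stratonovich correction and the quadratic variation of the cross-product noise correctly, and in verifying that the boundary terms from integration by parts indeed drop.
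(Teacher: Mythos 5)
Your proposal is correct and follows essentially the same route as the paper: pass to the It\^o formulation, average over $D$ (using the Neumann boundary conditions and the identity $-u\times(u\times\partial_x^2u)=\partial_x^2u+u|\partial_xu|^2$ to reduce the drift), apply It\^o's formula to $|\langle z^\nu_t\rangle|^2$ with the trace computation for the cross-product noise, and invoke stationarity plus the martingale property; the cancellation you describe for the $(\sqrt{\nu}h+1)$ noise is exactly the one carried out in the paper. The only nitpick is terminological: the term $2\nu|\langle hz^\nu_t\rangle|^2\,\dd t$ is the second-order It\^o correction $\tfrac12\mathrm{tr}(\sigma^\top F''\sigma)\,\dd t$ rather than the quadratic variation of $|\langle z^\nu_\cdot\rangle|^2$ itself (which would carry a factor $4\nu$ and the square of the full integrand), but the value you use in the formula is the right one.
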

\begin{proof}
	Consider the equation for the space average of the solution of the associated It\^o's formulation of the equation
	\begin{align*}
	\langle z^\nu_t\rangle-\langle z^\nu_s\rangle=\int_{s}^{t} A(z^\nu_r)\dd r+ \int_{s}^{t} \langle \sqrt{\nu} h z^\nu_r\rangle\times \dd W_r \, ,
	\end{align*}
	where $A(z^\nu_r)=\langle z^\nu_r\times \partial_x^2 z^\nu_r\rangle -\nu  \langle z^\nu_r\times [z^\nu_r\times \partial_x^2 z^\nu_r]\rangle -\nu \langle h^2z^\nu_r \rangle $. In the previous equality, we applied the stochastic Fubini's theorem to the stochastic integral (see e.g. \cite[Theorem IV.65]{protter}).
	We apply It\^o's formula to $F(x)=|x|^2$: we obtain that $F'(z^\nu)=2 z^\nu$ and $F''(z^\nu)=2I\in \mathbb{R}^{3\otimes 3}$. From It\^o's formula, 
	\begin{align*}
		 |\langle z^\nu_t\rangle|^2- |\langle z^\nu_s\rangle|^2=2\int_{s}^{t}
		A(z^\nu_r)\cdot \langle z^\nu_r\rangle \dd r&+\frac{1}{2}\int_{s}^{t}\mathrm{tr}\left(\Gamma(\langle \sqrt{\nu} h z^\nu_r\rangle )^T 2 I \Gamma(\langle \sqrt{\nu} h z^\nu_r\rangle )\right)\dd r\\
		&+2\int_{s}^{t}   \langle z^\nu_r\rangle\cdot\langle \sqrt{\nu} h z^\nu_r\rangle  \times \dd W_r\, ,
	\end{align*}
	where $\Gamma$ is defined in \eqref{eq:def_B}.
	As a consequence of Lemma \ref{lemma:traccia}, 
	\begin{align*}
		\mathrm{tr}\left(\Gamma(\langle \sqrt{\nu} h z^\nu_r\rangle )^T 2I \Gamma(\langle \sqrt{\nu} h z^\nu_r\rangle )\right)=4|\langle \sqrt{\nu} h z^\nu_r\rangle|^2\, .
	\end{align*}
	The drift has the form
	\begin{align*}
		A(z^\nu_r)\cdot \langle z^\nu_r\rangle&=[\langle z^\nu_r\times \partial_x^2 z^\nu_r\rangle -\nu  \langle z^\nu_r\times [z^\nu_r\times \partial_x^2 z^\nu_r]\rangle -\nu \langle h^2z^\nu_r \rangle ]\cdot \langle z^\nu_r\rangle \\
		&=[\langle z^\nu_r\times \partial_x^2 z^\nu_r\rangle +\nu  \langle \partial_x^2 z^\nu_r+z^\nu_r|\partial_x z^\nu_r|^2\rangle -\nu \langle h^2z^\nu_r \rangle ]\cdot \langle z^\nu_r\rangle \, ,
	\end{align*}
	where in the last equality we used \eqref{eq:u_cross_u_cross_Laplace}.
	Observe that, from the null Neumann boundary conditions on the gradient (which imply in one dimension that $\partial_x z^\nu=0$ on the boundary in the sense of the trace)
	\begin{align*}
	\langle z^\nu_r\times \partial_x^2 z^\nu_r\rangle  =\langle \partial_x(z^\nu_r\times \partial_x z^\nu_r) \rangle=0\, , \quad \quad \langle \partial_x^2 z^\nu_r\cdot \langle z^\nu_r\rangle \rangle=0\, .
	\end{align*}
	In conclusion the drift can be rephrased as
	\begin{align*}
	A(z^\nu_r)\cdot \langle z^\nu_r\rangle&=\nu  \langle z^\nu_r|\partial_x z^\nu_r|^2\rangle\cdot \langle z^\nu_r\rangle-\nu \langle h^2z^\nu_r \rangle \cdot \langle z^\nu_r\rangle \, .
\end{align*}
	This leads to the following expression 
	\begin{align*}
	|\langle z^\nu_t\rangle|^2 -|\langle z^\nu_s\rangle|^2&=2\nu\int_{s}^{t}\langle z^\nu_r|\partial_x z^\nu_r|^2\rangle \cdot \langle z^\nu_r\rangle \dd r-2\nu \int_{s}^{t}\langle h^2 z^\nu_r \rangle\cdot \langle z^\nu_r\rangle \dd r+2\nu \int_{s}^{t}|\langle  h z^\nu_r\rangle |^2\dd r\\
	&+2\int_{s}^{t}   \langle z^\nu_r\rangle\cdot\langle \sqrt{\nu} h z^\nu_r\times \dd W_r\rangle \, .
	\end{align*}
	By applying expectation, using the stationarity of the solutions and the fact that the stochastic integral is a martingale, the assertion follows.
	
	We proceed in the same way with $u^\nu$ and we obtain the equality
	\begin{align*}
		|\langle u^\nu_t\rangle|^2& -|\langle u^\nu_s\rangle|^2=2\nu\int_{s}^{t}\langle u^\nu_r|\partial_x u^\nu_r|^2\rangle \cdot \langle u^\nu_r\rangle \dd r- 2\int_{s}^{t}\langle (\sqrt{\nu}h+1)^2 u^\nu_r \rangle\cdot \langle u^\nu_r\rangle \dd r\\
		&+2 \int_{s}^{t}|\langle  (\sqrt{\nu}h+1) u^\nu_r\rangle |^2\dd r
		+2\int_{s}^{t}   \langle u^\nu_r\rangle\cdot\langle \sqrt{\nu} h u^\nu_r\times \dd W_r\rangle  +2\int_{s}^{t}   \langle u^\nu_r\rangle\cdot\langle  u^\nu_r \rangle\times  \dd W_r\, .
	\end{align*}
	Observe that the last stochastic integral vanishes by orthogonality. It also holds that
	\begin{align*}
		&\quad- \int_{s}^{t}\langle (\sqrt{\nu}h+1)^2 u^\nu_r \rangle\cdot \langle u^\nu_r\rangle \dd r+ \int_{s}^{t}|\langle  (\sqrt{\nu}h+1) u^\nu_r\rangle |^2\dd r\\
		&=- \int_{s}^{t}\langle (\nu h^2+1+2\sqrt{\nu}h) u^\nu_r \rangle\cdot \langle u^\nu_r\rangle \dd r+ \int_{s}^{t}\left[|\langle  \sqrt{\nu}h u^\nu_r\rangle|^2+2\langle  \sqrt{\nu}h u^\nu_r\rangle\cdot \langle   u^\nu_r\rangle +|\langle   u^\nu_r\rangle |^2\right]\dd r\\
		&= - \int_{s}^{t}\langle \nu h^2 u^\nu_r \rangle\cdot \langle u^\nu_r\rangle \dd r + \int_{s}^{t}|\langle  \sqrt{\nu}h u^\nu_r\rangle|^2\dd r\, .
	\end{align*}
	We can now apply expectation, use the martingale property of the stochastic integral and the stationarity of the solutions.
\end{proof}
The equality in Lemma \ref{lemma:condition_for_non_trivial} allows to show a further property of each stationary solution $z^\nu$.
\begin{corollary}\label{cor:inv_measure_LLG}
Let $h\in W^{1,\infty}(D;\mathbb{R})$ such that $\partial_x h \neq 0$ and $z^\nu$ be a stationary solution to \eqref{LLG_nu}. For every fixed $\nu\in(0,1]$, 
\begin{align*}
\mathbb{P}(\{\omega \in \Omega: \|\partial_x z^\nu_t\|^2_{L^2}>0\quad \forall t\geq 0\})=1\, .
\end{align*}
\end{corollary}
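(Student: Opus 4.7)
The plan is to reformulate the claim as a scalar non-attainability problem and then invoke a Feller-type analysis. Combining the pointwise constraint $|z^\nu(x)|_{\mathbb{R}^3}=1$ with Cauchy--Schwarz, the spatial mean satisfies $|\langle z^\nu_t\rangle|\leq 1$ with equality if and only if $z^\nu_t(\omega,\cdot)$ is a.e.~equal to a fixed unit vector (at equality, $\alpha\cdot z^\nu_t(x)=|\alpha||z^\nu_t(x)|$ pointwise with $\alpha=\langle z^\nu_t\rangle$ of unit norm, forcing $z^\nu_t\equiv \alpha$). Hence
\[
\{\omega:\|\partial_x z^\nu_t(\omega)\|_{L^2}^2=0\}=\{\omega:|\langle z^\nu_t(\omega)\rangle|^2=1\},
\]
and it suffices to show that the scalar semi-martingale $Y_t:=1-|\langle z^\nu_t\rangle|^2\geq 0$ is strictly positive for every $t\geq 0$, $\mathbb{P}$-almost surely.

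Next I would extract the pathwise It\^o decomposition of $Y$ already implicit in the proof of Lemma \ref{lemma:condition_for_non_trivial}:
\[
\dd Y_t=-2\nu\,\phi(z^\nu_t)\,\dd t-\dd M_t,\qquad \dd M_t=2\sqrt{\nu}\,\bigl(\langle z^\nu_t\rangle\times\langle hz^\nu_t\rangle\bigr)\cdot \dd W_t,
\]
with $\phi(v):=\langle v|\partial_x v|^2\rangle\cdot\langle v\rangle-\langle h^2 v\rangle\cdot\langle v\rangle+|\langle hv\rangle|^2$. Inserting the constant-in-$x$ ansatz $z^\nu_t\equiv c\in\mathbb{S}^2$ (i.e.~$Y=0$) shows that on the boundary the drift of $Y$ equals $-2\nu\phi(c)=2\nu(\langle h^2\rangle-\langle h\rangle^2)$, which is strictly positive exactly because $\partial_x h\neq 0$, while the quadratic variation of $M$ vanishes. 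To quantify how the noise degenerates as $Y\to 0$, I would use the decomposition $\langle hz^\nu\rangle=\langle h\rangle\langle z^\nu\rangle+\langle(h-\langle h\rangle)(z^\nu-\langle z^\nu\rangle)\rangle$ (valid since $\langle h-\langle h\rangle\rangle=0$) and apply Cauchy--Schwarz together with the identity $\langle|z^\nu_t-\langle z^\nu_t\rangle|^2\rangle=1-|\langle z^\nu_t\rangle|^2=Y_t$ to obtain the refined bound
\[
|\langle z^\nu_t\rangle\times\langle hz^\nu_t\rangle|^2\leq \bigl(\langle h^2\rangle-\langle h\rangle^2\bigr)\,Y_t.
\]

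Finally, applying It\^o's formula to the Lyapunov function $V(y)=-\log y$ and inserting the two estimates above, the potentially singular contributions to the drift and to the second order It\^o correction cancel exactly at $Y=0$, reducing the analysis to the borderline CIR situation in which the drift $2a=4\nu(\langle h^2\rangle-\langle h\rangle^2)$ precisely matches the $Y$-coefficient $c^2$ of the quadratic variation. A localisation at the hitting time $\tau:=\inf\{t\geq 0:Y_t=0\}$, combined with the uniform moment bounds of Lemma \ref{lemma:a_priori_bound_derviative_p} and the martingale property of $M$, then yields $\mathbb{E}[V(Y_{t\wedge\tau})]<\infty$ for every $t\geq 0$, forcing $\mathbb{P}(\tau<\infty)=0$. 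The main obstacle is exactly this Feller-type non-attainability step: since $Y_t$ is a scalar functional of the infinite-dimensional process $z^\nu$ and does not satisfy a closed one-dimensional SDE, the classical scale-function argument is not directly applicable, and it is the refined Cauchy--Schwarz estimate above (vanishing linearly in $Y$) which reduces the problem to the critical CIR boundary and allows the log-Lyapunov approach to close.
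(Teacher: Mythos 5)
Your reduction to the scalar process $Y_t:=1-|\langle z^\nu_t\rangle|^2$ is sound as far as it goes: the identification $\{\|\partial_x z^\nu_t\|_{L^2}=0\}=\{|\langle z^\nu_t\rangle|=1\}$, the It\^o decomposition of $Y$, and the refined bound $|\langle z^\nu_t\rangle\times\langle hz^\nu_t\rangle|^2\leq(\langle h^2\rangle-\langle h\rangle^2)\,Y_t$ all check out. The genuine gap is in the final Feller/log-Lyapunov step. Writing $v\cdot\langle v\rangle=1-\tfrac12\bigl(Y+|v-\langle v\rangle|^2\bigr)$, the term $\langle v|\partial_x v|^2\rangle\cdot\langle v\rangle$ in your $\phi$ is, to leading order near the boundary, $\langle|\partial_x z^\nu_t|^2\rangle$; by Poincar\'e--Wirtinger this is bounded \emph{below} by $cY_t$, but it admits no upper bound in terms of $Y_t$ (a state can be $L^2$-close to a constant while carrying a large gradient). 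Hence the drift of $Y$ near, but not at, the boundary is $2\nu(\langle h^2\rangle-\langle h\rangle^2)-2\nu\langle|\partial_x z^\nu_t|^2\rangle+O(Y_t^{1/2})$ and can be strongly negative, and in $\dd(-\log Y_t)$ the uncontrolled singular term $+2\nu\langle|\partial_x z^\nu_t|^2\rangle/Y_t$ appears with the unfavourable sign and is not dominated by the It\^o correction $\tfrac12\sigma_t^2/Y_t^2\leq 2\nu(\langle h^2\rangle-\langle h\rangle^2)/Y_t$. The ``exact cancellation'' you invoke holds only on the boundary itself, whereas a non-attainability argument needs it in a neighbourhood. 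Two further problems: the $O(Y^{1/2})$ cross terms leave a $Y_t^{-1/2}$ singularity in the drift of $-\log Y_t$ whose time-integrability must still be justified in the critical case; and initialising the localised bound requires $\hat{\mathbb{E}}[-\log Y_0]<\infty$ under the stationary law, which is essentially a quantitative form of the statement to be proved, so the argument is circular at $t=0$.

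The paper's proof is far softer and bypasses any boundary analysis: it restricts attention to the event $\Gamma=\{\omega:\|\partial_x z^\nu_t(\omega)\|_{L^2}=0\ \forall t\geq 0\}$ of trajectories that are spatially constant for \emph{all} times, multiplies the identity underlying Lemma \ref{lemma:condition_for_non_trivial} by $1_{\Gamma}$, and notes that on $\Gamma$ the gradient term vanishes identically while the stochastic integrand reduces to $\langle z_r\rangle\cdot\bigl(\langle h\rangle\langle z_r\rangle\times \dd W_r\bigr)=0$ by orthogonality; what survives is the algebraic identity $(\langle h^2\rangle-\langle h\rangle^2)\,\mathbb{P}(\Gamma)=0$, whence $\mathbb{P}(\Gamma)=0$. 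Your first two displays are exactly the ingredients that feed this indicator-function argument, so the elementary route is available to you without any SDE boundary classification; to rescue your route instead you would need to control $\mathbb{E}\bigl[\langle|\partial_x z^\nu_t|^2\rangle/Y_t\bigr]$, for which there is no apparent mechanism.
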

\begin{proof}
With a slight abuse of notation, we assume that the stationary solutions $(z^\nu)_{\nu}$ are defined on the whole real line: this can be achieved via a small modification of the Krylov-Bogolioubov argument.
We fix $\nu=1$ without loss of generality and we consider $z\equiv z^1$. Given a stationary solution $y$ to \eqref{LLG_nu}, it holds from Proposition \ref{pro:a_priori_approx_nu} and from the Poincaré inequality that $\mathbb{P}(\|\partial_x  y_t\|^2_{L^2}=0)=1  \iff \partial_x h=0$. 
This translates to $\mathbb{P}(\|\partial_x y_t\|^2_{L^2}=0)<1  \iff \partial_x h\neq 0$. In particular, there exists a set of positive measure $\mathbb{P}(\|\partial_x z_t\|^2_{L^2}>0)>0$. We prove that $\mathbb{P}(\|\partial_x z_t\|^2_{L^2}>0)=1$, provided $\partial_x h\neq0$. 
Denote by $\Gamma:=\{\omega\in \Omega: \|\partial_x z_t(\omega)\|_{L^2}=0 \quad \forall t\geq 0\}$, namely the set of events supporting solutions with trivial dynamic in space and time. By following the computations in Lemma \ref{lemma:condition_for_non_trivial} to the random variable $1_{\Gamma}z_t$, it follows 
\begin{align*}
\mathbb{E}[1_{\Gamma}\langle h^2 z_t \rangle\cdot \langle z_t\rangle]-\mathbb{E}[1_{\Gamma}|\langle  h z_t\rangle |^2]=\mathbb{E}\left[1_{\Gamma} \int_{0}^{t}   \langle z_r\rangle\cdot\langle  h z_r\times \dd W_r\rangle \right]\, ,
\end{align*}
for all $t\geq 0$. Now, using the fact that $z_t(\omega)=\langle z_t(\omega)\rangle$ for a.e. $x\in D$ and for all $\omega\in \Gamma$, we have
\begin{align*}
\mathbb{E}\left[1_{\Gamma} \int_{0}^{t}   \langle z_r\rangle\cdot\langle  h z_r\times \dd W_r\rangle \right]=\mathbb{E}\left[1_{\Gamma} \int_{0}^{t}   \langle z_r\rangle\cdot\langle z_r\rangle\langle h \rangle \times \dd W_r\right]=0\, ,
\end{align*}
from the elementary equality $a\,\cdot\, a\times b=0$ for all $a,b\in \mathbb{R}^3$.
Finally, by applying the fact that $z_t(\omega)=\langle z_t(\omega)\rangle$ for a.e. $x\in D$ and for all $\omega\in \Gamma$, we get the equation
\begin{align*}
(\langle h^2\rangle -\langle h \rangle^2) \mathbb{P}(\Gamma)=\mathbb{E}[1_{\Gamma}\langle h^2 z_t \rangle\cdot \langle z_t\rangle]-\mathbb{E}[1_{\Gamma}|\langle  h z_t\rangle |^2]=0\, .
\end{align*}
 Since $\partial_x h\neq 0$, we have $\langle h^2\rangle -\langle h \rangle^2\neq0$, which leads to the conclusion $\mathbb{P}(\Gamma)=0$.
\end{proof}

	\section{Proof of Theorem \ref{th:existence_statistical_solutions}}\label{sec:proof_theorem_1}

	\subsection{Proof of Theorem \ref{th:existence_statistical_solutions} a., b.}
	Consider the spaces
	\begin{align*}
		\mathcal{Y}^1_T:=L^2(0,T;H^1) \, ,
	\end{align*}
	\begin{align*}
		\mathcal{X}^1_T:=L^2 (0,T;H^2)\cap W^{\alpha,2}(0,T; L^2)\, ,
	\end{align*}
	for $\alpha\in (1/3,1/2)$. We prove that the family of invariant measures to the stochastic LLG equation \eqref{LLG_nu}, denoted by $(\mu^\nu)_{\nu}$, is tight on $\mathcal{Y}^1_T$. From Lemma \ref{lemma:tornstein}, the 
	ball $B_{1,R}:=\{ f\in \mathcal{X}^1_T: \|f\|_{\mathcal{X}^1_T}\leq R\}$ is relatively compact in $\mathcal{Y}^1_T$. From Chebyshev's inequality
	\begin{align*}
		\mu^\nu(B_{1,R}^C)\leq \frac{\int_{0}^{T}\mathbb{E}[\|z^\nu_r\|^2_{H^2}]\dd r+\mathbb{E}[\|z^\nu\|_{W^{\alpha,2}(0,T;L^2)}]}{R}\, .
	\end{align*}
	From Lemma \ref{cor:laplaciano_nu} and Lemma \ref{lemma:time_regularity}, the expectations in the numerator are uniformly bounded in $\nu$. 
	Consider the spaces  
	\begin{align*}
	\mathcal{Y}^2_T:= C_\mathrm{w}([0,T]; H^1)\, ,
	\end{align*}
	\begin{align*}
	\mathcal{X}^2_T:= L^\infty (0,T;H^1)\cap C^{\alpha}([0,T]; L^2)\, ,
	\end{align*}
	for $\alpha\in (1/3,1/2)$.
	From Lemma \ref{th:embedding_c_w_appendix}, the ball $B_{2,R}:=\{ f\in \mathcal{X}^2_T: \|f\|_{\mathcal{X}^2_T}\leq R\}$ is relatively compact in $\mathcal{Y}^2_T$. Again from Chebyshev's inequality and the uniform bounds in \eqref{eq:condition_stationarity_nu} in Lemma \ref{lemma:a_priori_bound_derviative_p} and Lemma \ref{lemma:time_regularity} of $(z^\nu)_\nu$ in $\mathcal{X}^2_T$,
		\begin{align*}
		\mu^\nu(B_{2,R}^C)\leq \frac{\mathbb{E}\left[\|z^\nu\|^2_{L^\infty (0,T;H^1)}\right]+\mathbb{E}[\|z^\nu\|_{C^{\alpha}(0,T;L^2)}]}{R}\, ,
		\end{align*}
	 and the sequence $(\mu^\nu)_\nu$ is tight on $\mathcal{Y}^2_T$.

	In conclusion, 
	$(\mu^\nu)_\nu$ is tight on $\mathcal{Y}_T:=\mathcal{Y}^1_T\cap \mathcal{Y}^2_T$ for all $T>0$. The tightness of $(\mu^\nu)_\nu$ on $\mathcal{Y}_T$ for every $T>0$ coincides with the tightness of $(\mu^\nu)_\nu$ on the locally convex topological space
	\begin{align}\label{eq:Y_grande}
		\mathcal{Y}:= L^2_{\mathrm{loc}}([0,+\infty);H^1)  \cap  C_{\mathrm{w}}([0,+\infty); H^1)\, .
	\end{align}
	where each space is endowed with the following topology:
	\begin{itemize}
		\item $L^2_{\mathrm{loc}}([0,+\infty);H^1)$ is endowed with $d_1(f,g):=\sum_{k\in \mathbb{N}}\frac{1}{2^k}\frac{\|f-g\|_{L^2(0,k;H^1)}}{1+\|f-g\|_{L^2(0,k;H^1)}}$,
		\item  $C_{\mathrm{w}}([0,+\infty); H^1)$ is endowed with semi-norms $\|f\|_{k,g}:=\sup_{t\in [0,k]}|\langle f_t,g\rangle|$, for $k\in\mathbb{N}$ and $g\in H^{-1}$.
	\end{itemize}
	The topology on $\mathcal{Y}$ is generated by the supremum of the topologies on each space.\\
	
	From the Skorokhod-Jakubowski Theorem (see e.g. \cite[Theorem 2.7.1]{martina_book}), there exists a probability space $(\hat{\Omega},\hat{\mathcal{F}},\hat{\mathbb{P}})$, a sequence $(\nu_j)_j$ converging to $0$ and random variables $(Z^{j})_j$, $Z$ on $(\hat{\Omega},\hat{\mathcal{F}},\hat{\mathbb{P}})$ and with values in $\mathcal{Y}$ such that 
	\begin{itemize}
		\item[1.] for every $j\in \mathbb{N}$, $Z^{j}$ and $z^{\nu_j}$ have the same law,
		\item[2.] $Z^{j}$ converges $\hat{\mathbb{P}}$-a.s. to $Z$ in $\mathcal{Y}$ as $j\rightarrow +\infty$.
	\end{itemize}
	Hence $(Z^j)_j$ converges in $C_\mathrm{w}([0,\infty); H^1)$ to $Z$. From \eqref{eq:condition_stationarity_nu}, 
	\begin{align*}
	\hat{\mathbb{E}}\left[\sup_{t\in [0,T]}\|\partial_x Z^j_t\|^2_{L^2}\right]=\mathbb{E}\left[\sup_{t\in [0,T]}\|\partial_x z^{\nu_j}_t\|^2_{L^2}\right]\lesssim\|\partial_x h\|^2_{L^2}\, .
	\end{align*}
	Thus $Z$ is stationary on $H^1$ from Proposition \ref{pro:sufficient_condition_stationary}.\\
	
	The considerations that follow hold for a.e. $t\geq 0$ and we make use of the stationarity of $z^{\nu_j}$. Recall from Lemma \ref{lemma:a_priori_bound_derviative_p} the uniform bound 
	\begin{align*}
	\hat{\mathbb{E}}[\|\partial_x Z^{j}_t\|_{L^2}^p]=\mathbb{E}[\|\partial_x z^{\nu_j}_t\|_{L^2}^p]\lesssim \|\partial_x h\|^p_{L^2}\, ,
	\end{align*}
	which holds for all $\nu\in(0,1]$ and for all $p\geq 2$. From Fatou's Lemma, the lower semi-continuity of the norm and the strong convergence in $L^2_{\mathrm{loc}}([0,\infty);H^1)$, the bound holds also for the limit $Z$
	\begin{align}\label{eq:inequality_partia_x_z}
	\hat{\mathbb{E}}[\|\partial_x Z_t\|_{L^2}^p]\lesssim \|\partial_x h\|^p_{L^2}\, .
	\end{align}
	  Since the sequences $(\partial_x^2 Z^{j})_j$, $(Z^{j}\times\partial_x^2 Z^{j})_j$ are uniformly bounded in $\mathcal{L}^2(\hat{\Omega}; L^2_{\mathrm{loc}}([0,\infty);L^2))$, they admit a weakly convergent subsequence (we identify from now on the sequence $(Z^j)_j$  with such a convergent subsequence). 
	We show that the weak limit of $(Z^{j}\times\partial_x^2 Z^{j})_j$ coincides with $Z\times\partial_x^2 Z$. 
	For a.e. $t>0$ and $\hat{\mathbb{P}}$-a.s., it holds that for all $\phi\in L^2$
	\begin{equation}\label{eq:equazione_limite}
	\begin{aligned}
	\lim_{j\rightarrow +\infty}\int_{0}^{T}\langle Z^j_t\times \partial_x^2 Z^j_t - Z_t\times \partial_x^2 Z_t ,\phi \rangle\dd t =\lim_{j\rightarrow +\infty }&-\int_{0}^{T}\langle Z_t\times (\partial_x  Z^j_t  -\partial_x Z_t ),\partial_x \phi \rangle \dd t\\
	&-\int_{0}^{T} \langle (Z^j_t-Z_t)\times \partial_x Z^j_t ,\partial_x \phi \rangle \dd t\, ,
	\end{aligned}
	\end{equation}
	where we used that $\partial_x (Z_t\times \partial_x Z_t)=Z_t\times \partial^2_x Z_t$, the integration by parts and the null Neumann boundary conditions.
	We look at the two expectations above separately. For the first term, we notice that
	\begin{align*}
	\int_{0}^{T}\left|\langle Z_t\times (\partial_x  Z^j_t  -\partial_x  Z_t ),\partial_x\phi \rangle \right|\dd t&=\left|(\langle Z_t\times \partial_x( Z^j_t  - Z_t )),\partial_x \phi \rangle \right|\\
	&\leq \int_{0}^{T} \|\partial_x (Z^j_t-Z_t)\|_{L^2}\|\partial_x \phi\|_{L^2}\dd t\, .
	\end{align*}
	From the Cauchy-Schwarz inequality and the strong convergence of $(Z^j)_j$ to $Z$ in $L^2_{\mathrm{loc}}([0,\infty);H^1)$, the first integral on the right hand side of \eqref{eq:equazione_limite} converges to $0$. For the second integral on the right hand side of \eqref{eq:equazione_limite}, we observe that
	\begin{align*}
	\int_{0}^{T}\left|\langle (Z^j_t-Z_t)\times \partial_x Z^j_t ,\partial_x \phi \rangle \right|\dd t\leq\int_{0}^{T} \|Z^j_t-Z_t\|_{L^\infty}\|\partial_x Z^j\|_{L^2}\|\partial_x \phi\|_{L^2}\dd t\, .
	\end{align*}
	Since  $H^1$ is continuously embedded in $L^\infty$ in one dimension, the above inequality implies for $T\geq 0$ and $\hat{\mathbb{P}}$-a.s. that
	\begin{align*}
	\lim_{j\rightarrow +\infty} \int_{0}^{T}\langle (Z^j_t-Z_t)\times \partial_x Z^j_t ,\partial_x \phi \rangle \dd t=0\, .
	\end{align*}
	In conclusion for a.e. $t\geq 0$ and $\hat{\mathbb{P}}$-a.s. 
	\begin{align*}
	\lim_{j\rightarrow +\infty} \int_{0}^{T}\langle Z^j_t\times \partial_x^2 Z^j_t - Z_t\times \partial_x^2 Z_t ,\phi \rangle \dd t=0\, .
	\end{align*}
	From the above considerations and from the weak convergence of $(Z^j\times \partial_x^2 Z^j)_{j}$ to $F$ in $\mathcal{L}^2(\hat{\Omega};\allowbreak L^2_{\mathrm{loc}}([0,\infty);L^2))$, it follows that for a.e. $t\geq 0$ and $\hat{\mathbb{P}}$-a.s. 
	\begin{align*}
	\lim_{j\rightarrow +\infty}\int_{0}^{T}\langle Z_t\times \partial_x^2 Z_t - F_t,\phi \rangle\dd t &=\lim_{j\rightarrow +\infty} \int_{0}^{T}\langle \langle Z_t\times \partial_x^2 Z_t -Z^j_t\times \partial_x^2 Z^j_t,\phi \rangle\dd t\\ &\quad+\lim_{j\rightarrow +\infty} \int_{0}^{T}\langle Z^j_t\times \partial_x^2 Z^j_t - F_t ,\phi \rangle\dd t \, .
	\end{align*}
	Hence we can identify the limit $F=Z\times \partial_x^2 Z$ as an equality in $L^2$. 	
	For the Laplacian, since it is a linear operator, the identification can be done as in the previous steps. From Fatou's Lemma, the lower semi-continuity of the norm on $\mathcal{L}^2(\hat{\Omega}; L^2_{\mathrm{loc}}([0,\infty);L^2))$ and the weak convergence, it holds that
	\begin{align*}
		\hat{\mathbb{E}} [\|\partial_x^2 Z_t\|^2_{L^2}]\leq C\, , \quad \hat{\mathbb{E}} [\|Z_t\times \partial_x^2 Z_t\|^2_{L^2}]\leq \|\partial_x h\|^2_{L^2}\, .
	\end{align*}
	From the equality \eqref{eq:fundamental_equality}, it follows that
	\begin{align*}
	\hat{\mathbb{E}} [\|\partial_x Z_t\|^4_{L^4}]\leq C\, .
	\end{align*}
	For all $j>0$, and for all $t\geq 0$ and $\hat{\mathbb{P}}$-a.s., it holds that $Z^{j}_t\in \mathbb{S}^2$,  hence also the limit $Z_t\in \mathbb{S}^2$ $\hat{\mathbb{P}}$-a.s. In particular $\hat{\mathbb{P}}$-a.s.~$Z\in L^\infty_{\mathrm{loc}}([0,+\infty);H^1)\cap L^2_{\mathrm{loc}}([0,+\infty);H^2)\cap C([0,+\infty);H^1)$. The previous bounds on the expectation prove Theorem \ref{th:existence_statistical_solutions} b.\\

	We now identify the limit equation satisfied by $Z$.  We recall that for every $T\geq 0$ and $\hat{\mathbb{P}}$-a.s., $Z^j$ fulfils the equation as the equality
	\begin{align}\label{eq:equazione_j_teorema}
		Z^j_T-Z^j_0-\int_{0}^{T}Z^j_t\times \partial_x^2 Z^j_t \dd t= -\nu_j\int_{0}^{T}Z^j_t\times(Z^j_t\times \partial_x^2 Z^j_t )\dd t + \Phi^{\nu_j}_{0,T}\, 
	\end{align}
	in $L^2$, where  $\Phi^{\nu_j}$ is defined implicitly by the equation and has the same law as
	\begin{align}\label{eq:stoch_int_z_nu_j}
		\sqrt{\nu_j}\int_{0}^{T} h z^{\nu_j}_t\times \circ \dd W_t\, .
	\end{align}
	The $\hat{\mathbb{P}}$-a.s.~limit for $j\rightarrow +\infty$ in $L^2$ of the left hand side of \eqref{eq:equazione_j_teorema} converges to 
	\begin{align*}
		Z_T-Z_0-\int_{0}^{T}Z_t\times \partial_x^2 Z_t \dd t
	\end{align*}
	from the $\hat{\mathbb{P}}$-a.s.~convergence of $(Z^j)_j$ to $Z$ in $C([0,+\infty);L^2)$ and the previous identification of the limit in the non-linearity. 
	The right hand side of \eqref{eq:equazione_j_teorema}  converges 
	 for $j\rightarrow +\infty$ to $0$ in $\mathcal{L}^1(\hat{\Omega};L^2)$: indeed, since $Z^j_t\in\mathbb{S}^2$, it holds
	\begin{align*}
	\nu^2_j\int_{0}^{T}\hat{\mathbb{E}}\|Z^j_t\times (Z^j_t\times \partial^2_x Z^j_t)\|^2_{L^2}\dd t\leq \nu^2_j\int_{0}^{T}\hat{\mathbb{E}}\|Z^j_t\times \partial^2_x Z^j_t\|^2_{L^2}\dd t \leq T\nu^2_j\|\partial_x h\|^2_{L^2}\, .
	\end{align*}
	To show that the term $\Phi^{\nu_j}_{0,T}$ in \eqref{eq:equazione_j_teorema} converges to $0$ for $j\rightarrow +\infty$, we recall that the law of $\Phi^{\nu_j}_{0,T}$ coincides with the law of \eqref{eq:stoch_int_z_nu_j}. Thus we can establish a vanishing bound in $\nu_j$ for the stochastic integral in \eqref{eq:stoch_int_z_nu_j}. From the It\^o-Stratonovich correction (see Lemma \ref{lemma:ito_stratonovich_correction}) 
	\begin{align}\label{eq:equazione_rumore}
	\sqrt{\nu_j} \int_{0}^{T} h z^{\nu_j}_t\times \circ \dd W_t=\sqrt{\nu_j} \int_{0}^{T} h z^{\nu_j}_t\times  \dd W_t-\nu_j\int_0^T h^2 z^{\nu_j}_t \dd t\, .
	\end{align}
	We apply the $L^2$-norm in space and take expectation in \eqref{eq:equazione_rumore}. We obtain
	\begin{align*}
		\mathbb{E}\left\|\sqrt{\nu_j} \int_{0}^{T} h z^{\nu_j}_t\times \circ \dd W_t\right\|_{L^2}\leq \mathbb{E}\left\|\sqrt{\nu_j} \int_{0}^{T} h z^{\nu_j}_t\times  \dd W_t\right\|_{L^2}+ \mathbb{E}\left\|\nu_j\int_0^T h^2 z^{\nu_j}_t \dd t\right\|_{L^2}\, .
	\end{align*}
	From the Burkholder-Davis-Gundy inequality and the properties of the Lebesgue integral, 
	\begin{align}\label{eq:bound_stoch_int_nu}
		\mathbb{E}\left\|\sqrt{\nu_j} \int_{0}^{T} h z^{\nu_j}_t\times \circ \dd W_t\right\|_{L^2}\leq (\sqrt{\nu_j}+\nu_j)\left(\int_0^T \mathbb{E}\left\|(h^2+h) z^{\nu_j}_t \right\|^2_{L^2} \dd t\right)^{1/2} \nonumber\\
		\leq (\sqrt{\nu_j}+\nu_j) T^{1/2}\| h^2 +h\|_{L^2}\, .
	\end{align}
	Thus the bound in \eqref{eq:bound_stoch_int_nu} holds also for $\hat{\mathbb{E}}[\|\Phi^{\nu_j}_{0,T}\|_{L^2}]$. This implies that $Z$ is a solution to the deterministic (SME) and the proof of Theorem \ref{th:existence_statistical_solutions} a.~is concluded.
	
	We show that the limit $Z$ has null Neumann boundary conditions. For all $\phi\in H^1$,
	\begin{align*}
		\int_D Z^{\nu_j}_t\times \partial_x^2 Z^{\nu_j}_t\cdot \phi \dd x=-\int_D Z^{\nu_j}_t\times \partial_x Z^{\nu_j}_t\cdot \partial_x\phi \dd x+\int_D\partial_x[ Z^{\nu_j}_t\times \partial_x Z^{\nu_j}_t\cdot \phi ]\dd x\, .
	\end{align*}
	For all $j\in \mathbb{N}$ it holds from the divergence theorem and the null Neumann boundary conditions on $Z^{\nu_j}$,
	\begin{align*}
	\int_D\partial_x[ Z^{\nu_j}_t\times \partial_x Z^{\nu_j}_t\cdot \phi ]\dd x=0\, .
	\end{align*}
	By passing to the limit $j\rightarrow +\infty$, the null Neumann boundary conditions pass to the limit solution $Z$.
	We prove now Theorem \ref{th:existence_statistical_solutions} c.: we discuss the existence of conservation laws for the limit solution $Z$. In what follows, we work for all $T\geq 0$ and $\hat{\mathbb{P}}$-a.s. From the previous step, $Z$ is a solution to the deterministic (SME) and by considering its spatial average 
	\begin{align*}
		\langle Z_T\rangle = \langle Z_0\rangle+\int_{0}^{T}\langle Z_t\times \partial^2_x Z_t\rangle \dd t=\langle Z_0\rangle \, ,
	\end{align*}
	since $ \langle Z_t\times \partial^2_x Z_t\rangle=\langle \partial_x(Z_t\times \partial_x Z_t)\rangle=0$ for a.e. $t> 0$ and $\hat{\mathbb{P}}$-a.s.~from the null Neumann boundary conditions.
	We look now at the time evolution of $\|\partial_x Z_T\|^2_{L^2}$, 
	\begin{align*}
		\|\partial_x Z_T\|^2_{L^2}=\|\partial_x Z_0\|^2_{L^2}+\int_{0}^{T}\langle \partial_x [Z_t\times \partial_x^2 Z_t], \partial_x Z_t\rangle \dd t\, . 
	\end{align*}
	The Lebesgue integral on the right hand side vanishes from the integration by parts formula, the null Neumann boundary conditions and the elementary equality $(a\times b)\cdot b=0$ for all $a,b\in \mathbb{R}^3$. In conclusion, $t\mapsto \|\partial_x Z_t\|^2_{L^2}$ is a constant map.
	
	By looking at the other conservation law in the statement, the equality $\|Z_T\|_{L^2}=\|Z_0\|_{L^2}$ holds trivially from the spherical bound. Moreover, $\langle Z_t,Q\rangle_{L^2}=\langle Z_0,Q\rangle_{L^2}$ for all $t\geq 0$. Hence the map $t\mapsto \|Z_t-Q\|^2_{L^2}$ is constant $\hat{\mathbb{P}}$-a.s.

\section{Proof of Theorem \ref{th:intro_solution_stochastic}.}\label{sec:proof_theorem_4_stochasticity}
This section is devoted to the proof of Theorem \ref{th:intro_solution_stochastic}. We underline the core elements of each step.
\begin{itemize}
\item[a.] Theorem \ref{th:intro_solution_stochastic} a.: The easy implication is $\partial_x h=0$ implies $\partial_x Z=0$: we use Lemma \ref{lemma:a_priori_bound_derviative_p} on $\partial_x Z^j=0$ and the claim follows, by passing to the limit.
The other implication is less trivial and it relies on the equality in Lemma \ref{lemma:condition_for_non_trivial}: Lemma \ref{lemma:condition_for_non_trivial} is used at the level of the approximations, then the property is passed to the limit equation. The resulting equality allows to conclude that $h$ can be viewed as a random variable with null variance, thus it must be constant.
\item[b.] Theorem \ref{th:intro_solution_stochastic} b.: is a consequence of Theorem \ref{th:intro_solution_stochastic} a.~and other immediate considerations on the expectation of the statistically stationary solution.
\item[c.] Theorem \ref{th:intro_solution_stochastic} c.: is a consequence of Theorem \ref{th:solution_stochastic} below and of the Poincaré inequality.
\item[d.] Theorem \ref{th:intro_solution_stochastic} d., e.: are corollaries of the previous steps and of Lemma \ref{lemma:condition_for_non_trivial}.

\end{itemize}
\subsection{Proof of Theorem \ref{th:intro_solution_stochastic} a.}
We prove the following assertion: fix  $h\neq 0$, $\partial_x h=0$ if and only if $\partial_x Z=0$ for a.e.~$x\in D$ and $\hat{\mathbb{P}}$-a.s.
Then the claim follows. \\

We start from the easier case: assume $\partial_x h=0$. 	
In this case, by Lemma \ref{lemma:a_priori_bound_derviative_p}, $\partial_x Z^{j}=0$ for all $j\in \mathbb{N}$, for a.e.~$x\in D$ and $\hat{\mathbb{P}}$-a.s.: hence, from the strong convergence in $L^2_{\mathrm{loc}}([0,\infty);H^1)$, also $\partial_x Z=0$ for a.e.~$x\in D$ and $\hat{\mathbb{P}}$-a.s. \\

Assume now that $\partial_x Z=0$. From the Poincaré-Wirtinger inequality (see Lemma \ref{teo:Poin_wirt}), $Z=\langle Z\rangle$ $\hat{\mathbb{P}}$-a.s.~and for a.e.~$x\in D$.  In this case, for all $j\in \mathbb{N}$ and for all $t\geq 0$ recall the identity in Lemma \ref{lemma:condition_for_non_trivial} 
\begin{align}\label{eq:label_IIF_a}
\hat{\mathbb{E}}[\langle Z^j_t|\partial_x  Z^j_t|^2\rangle \cdot \langle Z^j_t\rangle]-\hat{\mathbb{E}}[\langle h^2 Z^j_t \rangle\cdot \langle Z^j_t\rangle]=-\hat{\mathbb{E}}[|\langle  h Z^j_t\rangle |^2]\, .
\end{align}
Since $Z^j_t\in \mathbb{S}^2$ $\hat{\mathbb{P}}$-a.s., for all $t\geq 0$ and for $x\in D$ and $h(x)\in \mathbb{R}$ for a.e.~$x\in D$, the sequences $(\langle h^2 Z^j_t \rangle\cdot \langle Z^j_t\rangle)_j$ and $(|\langle  h Z^j_t\rangle |^2)_j$ are uniformly integrable. From Proposition \ref{lemma:a_priori_bound_derviative_p} and the spherical bounds, the sequence $(\langle Z^j_t|\partial_x  Z^j_t|^2\rangle \cdot \langle Z^j_t\rangle)_j$ is uniformly integrable, indeed 
\begin{align*}
\hat{\mathbb{E}}[|\langle Z^j_t|\partial_x Z^j_t|^2\rangle \cdot \langle Z^j_t\rangle|^2]\lesssim_D \hat{\mathbb{E}}[\|\partial_x Z^j_t\|^4_{L^4}]\, .
\end{align*}
We can pass to the limit under the sign of expectation in equality \eqref{eq:label_IIF_a} and in the space integral. From the strong convergence in $L^2_{\mathrm{loc}}([0,\infty);H^1)$ and the hypothesis $\partial_x Z=0$,  
\begin{align}\label{eq:equality_h_constant}
\hat{\mathbb{E}}[\langle h^2 Z_t \rangle\cdot \langle Z_t\rangle]=\hat{\mathbb{E}}[|\langle  h Z_t\rangle |^2]\, .
\end{align}
Recall now that $Z_t=\langle Z_t  \rangle$ $\hat{\mathbb{P}}$-a.s., for a.e.~$x\in D$ and $t\geq 0$, thus $|Z_t|=|\langle Z_t  \rangle|=1$. We can rewrite the left hand side of \eqref{eq:equality_h_constant} as
\begin{align*}
\hat{\mathbb{E}}[\langle h^2 Z_t \rangle\cdot \langle Z_t\rangle]= \hat{\mathbb{E}}[\langle h^2\rangle |\langle  Z_t \rangle|^2]=\langle h^2\rangle \, .
\end{align*}
With analogous considerations, we rewrite the right hand side of \eqref{eq:equality_h_constant} as
\begin{align*}
\hat{\mathbb{E}}[|\langle  h Z_t\rangle |^2]=\hat{\mathbb{E}}[|\langle  h\rangle |^2| \langle Z_t\rangle |^2]= |\langle  h\rangle |^2\, .
\end{align*}
This implies that $\langle h^2\rangle =|\langle  h\rangle |^2$, which holds only if $x\mapsto h(x)$ is constant in space. Indeed, we view $x\mapsto h(x)$ as a random variable defined on the probability space $(D,\mathcal{B}_D, \dd x/|D|)$: the relation $\langle h^2\rangle =|\langle  h\rangle |^2$ says that $h$ is a random variable with null variance. Thus $\langle (h-\langle  h\rangle )^2 \rangle =0$, which implies that $h-\langle  h\rangle =0$ for a.e.~$x\in D$ and therefore $h$ is constant in space.

\subsection{Theorem \ref{th:intro_solution_stochastic} b. }
We show that the map $t\mapsto Z_t$ is constant for a.e. $x\in D$ and $\hat{\mathbb{P}}$-a.s. if and only if $\partial_x h=0$. We start by showing that if $t\mapsto Z_t$ is constant for a.e. $x\in D$ and $\hat{\mathbb{P}}$-a.s. then $\partial_x h=0$: the assumption implies that $Z_t\times \partial_x^2 Z_t=0$ for a.e.~$x\in D$, $\hat{\mathbb{P}}$-a.s.~and for all $t\geq0$. From the equality \eqref{eq:label_IIF_a} and the same considerations as in the proof of Theorem 1.2 a., it follows that $h=\langle h\rangle$ for a.e.~$x\in D$ and thus $\partial_x h=0$.
Assume now that $\partial_x h=0$, then from inequality \eqref{eq:inequality_partia_x_z} it holds $\partial_x Z_t=0$ $\hat{\mathbb{P}}$-a.s., for a.e.~$x\in D$ and for all $t\geq 0$. Thus the non-linearity in the SME vanishes and the map $t\mapsto Z_t$ is constant for a.e. $x\in D$ and $\hat{\mathbb{P}}$-a.s.
Note that in the above we have shown that
\begin{align*}
\hat{\mathbb{P}}\left(\{\hat{\omega}\in \hat{\Omega}:t\mapsto Z_t(\hat{\omega})\; \mathrm{is }\; \mathrm{constant\;  for\; a.e. } \; x\in D\}\right)= \hat{\mathbb{P}}\left(\{\hat{\omega}\in \hat{\Omega}:\|\partial_x  Z_t(\hat{\omega})\|_{L^2}=0 \quad \forall t\geq 0 \}\right)\, .
\end{align*}
The second part of Theorem \ref{th:intro_solution_stochastic} a.~together with the proved assertion implies that $\partial_x h\neq 0$ if and only if
\begin{align*}
\hat{\mathbb{P}}\left(\{\hat{\omega}\in \hat{\Omega}:t\mapsto Z_t(\hat{\omega})\; \mathrm{is }\; \mathrm{constant\;  for\; a.e. } \; x\in D\}\right)=0\, .
\end{align*}

\subsection{Theorem \ref{th:intro_solution_stochastic} c.}
We show that the solutions are genuinely stochastic.
\begin{theorem}\label{th:solution_stochastic}
	Assume that $\partial_x h=0$, then $\partial_x Z=0$, $Z=\langle Z\rangle$ $\hat{\mathbb{P}}$-a.s. and for all $t\geq 0$
	\begin{align*}
	\hat{\mathbb{E}}[Z_t]=0\, .
	\end{align*}
	Assume that $\partial_x h\neq 0$,  then for all $t\geq 0$ it holds  $\hat{\mathbb{P}}(\|\partial_x Z_t\|_{L^2}>0)>0$ and
	\begin{align*}
	\hat{\mathbb{E}}[Z_t\times \partial_x^2 Z_t]=0\, .
	\end{align*}
\end{theorem}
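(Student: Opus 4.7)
My plan is to split the statement into its two conditional regimes and treat each using results already at hand: Theorem \ref{th:intro_solution_stochastic} a. (just proved), the conservation laws of Theorem \ref{th:existence_statistical_solutions} c., Lemma \ref{lemma:stochasticity_partial_x_h_0} at the level of the approximations $z^{\nu_j}$, and the uniform bound $\hat{\mathbb{E}}[\|Z_r\times\partial_x^2 Z_r\|_{L^2}^2]\leq \|\partial_x h\|_{L^2}^2$ from Theorem \ref{th:existence_statistical_solutions} b. The common thread is that a quantity which is conserved by the deterministic SME forces, under stationarity, the corresponding expectation to vanish.

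In the regime $\partial_x h=0$ (with $h\neq 0$ standing), Theorem \ref{th:intro_solution_stochastic} a. gives $\partial_x Z=0$ $\hat{\mathbb{P}}$-a.s.~and the Poincaré--Wirtinger inequality then yields $Z_t=\langle Z_t\rangle$. To identify the expectation, I apply Lemma \ref{lemma:stochasticity_partial_x_h_0} to each stationary approximant $z^{\nu_j}$; since $h$ is a nonzero constant, the identity $h^2\mathbb{E}[z^{\nu_j}_t]=0$ forces $\mathbb{E}[z^{\nu_j}_t]=0$. The $C_\mathrm{w}([0,\infty);H^1)$-convergence $Z^j\to Z$ combined with the uniform bound $|Z^j_t|_{\mathbb{R}^3}=1$ (yielding dominated convergence when tested against $\phi\in L^2$) transfers this to the limit: for every $\phi\in L^2(D;\mathbb{R}^3)$,
$$\int_D \hat{\mathbb{E}}[Z_t]\cdot\phi\,\dd x=\lim_{j\to\infty}\int_D \mathbb{E}[z^{\nu_j}_t]\cdot\phi\,\dd x=0,$$
so $\hat{\mathbb{E}}[Z_t]=0$ in $L^2(D;\mathbb{R}^3)$.

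In the regime $\partial_x h\neq 0$, the first conclusion is a direct consequence of Theorem \ref{th:intro_solution_stochastic} a. combined with the conservation $\|\partial_x Z_t\|_{L^2}^2=\|\partial_x Z_0\|_{L^2}^2$ of Theorem \ref{th:existence_statistical_solutions} c.: the set where the gradient is nonzero has positive measure and, being $t$-independent on each trajectory, is exactly $\{\hat{\omega}: \|\partial_x Z_t(\hat{\omega})\|_{L^2}>0\ \forall t\geq 0\}$. For the second conclusion, I exploit that $Z$ solves the SME $\hat{\mathbb{P}}$-a.s.; testing the equation in $L^2$ and applying Fubini (justified by Theorem \ref{th:existence_statistical_solutions} b.) gives
$$\hat{\mathbb{E}}[Z_t]-\hat{\mathbb{E}}[Z_0]=\int_0^t \hat{\mathbb{E}}[Z_r\times\partial_x^2 Z_r]\,\dd r.$$
Stationarity of $Z$ forces the left-hand side to vanish for every $t\geq 0$, while the integrand on the right is $r$-independent. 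Evaluating the identity for any $t>0$ then yields $\hat{\mathbb{E}}[Z_0\times\partial_x^2 Z_0]=0$, whence $\hat{\mathbb{E}}[Z_t\times\partial_x^2 Z_t]=0$ for every $t$ by stationarity.

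The main technical point to argue carefully is that $\hat{\mathbb{E}}[Z_r\times\partial_x^2 Z_r]$ is genuinely $r$-independent: stationarity is stated in $H^1$, while the map $u\mapsto u\times\partial_x^2 u$ naturally lives on $H^2$. The safest route is to test against a smooth $\phi\in H^2$ and use the integration-by-parts identity $\langle Z_r\times\partial_x^2 Z_r,\phi\rangle=-\langle Z_r\times\partial_x Z_r,\partial_x\phi\rangle$, which relies on the null Neumann boundary conditions and is continuous in the $H^1$-topology; $H^1$-stationarity of $Z$ then makes this functional's expectation $r$-independent, and a density argument in $\phi$ produces the desired $L^2$-valued identity.
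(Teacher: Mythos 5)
Your proof is correct and follows essentially the same route as the paper: split on $\partial_x h=0$ versus $\partial_x h\neq 0$, use Theorem \ref{th:intro_solution_stochastic} a.\ for the gradient statements, Lemma \ref{lemma:stochasticity_partial_x_h_0} (with $h\neq 0$) for $\hat{\mathbb{E}}[Z_t]=0$, and take expectation in the SME plus stationarity to get $\hat{\mathbb{E}}[Z_t\times\partial_x^2 Z_t]=0$. Your extra care in passing the identity $\mathbb{E}[z^{\nu_j}_t]=0$ to the limit and in justifying the $r$-independence of $\hat{\mathbb{E}}[Z_r\times\partial_x^2 Z_r]$ via testing against $\phi\in H^2$ only makes explicit steps the paper leaves implicit.
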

\begin{proof}
	What follows holds for all $t\geq 0$.	
	If $\partial_x h \neq 0$, then from Theorem \ref{th:intro_solution_stochastic} a.~$\hat{\mathbb{P}}(\|\partial_x Z_t\|^2_{L^2}>0)>0$. From the Poincaré inequality, $\|Z_t\times \partial_x^2 Z_t\|^2_{L^2}>0$ on a set of positive probability. We take expectation, 
	\begin{align*}
		\hat{\mathbb{E}}[Z_t]=\hat{\mathbb{E}}[Z_0]+\int_{0}^{t} \hat{\mathbb{E}}[Z_r\times \partial_x^2 Z_r ]\dd r
	\end{align*}
	and, by using the stationarity of the solution $Z$, 
	\begin{align*}
	\hat{\mathbb{E}}[Z_t\times \partial_x^2 Z_t]=0\, .
	\end{align*}
	Thus $Z_t\times \partial_x^2 Z_t\neq 0$ with positive probability, but its space average is $0$: this implies that the map $\hat{\omega} \mapsto Z_t(\hat{\omega})\times \partial_x^2 Z_t(\hat{\omega})$ is stochastic.\\
	
	If $\partial_x h=0$, it holds $Z_t=\langle Z_t \rangle$ $\hat{\mathbb{P}}$-a.s. and $|Z_t|=|\langle Z_t \rangle|=1$. From Lemma \ref{lemma:stochasticity_partial_x_h_0}, it holds that $\hat{\mathbb{E}}[\langle Z_t \rangle]=0$: hence $\hat{\omega}\mapsto Z_t(\hat{\omega})=\langle  Z_t(\hat{\omega}) \rangle $ is stochastic.
\end{proof}

\subsection{Theorem \ref{th:intro_solution_stochastic} d. }
\textcolor{black}{We prove that $Z$ is not a sum of a countable number of indicator functions if $\partial_x h\neq 0$.}
Assume that $Z(\hat{\omega})=\sum_{j=1}^{\infty}g^j1_{\hat{\Omega}_j}$ for all $\hat{\omega}\in \hat{\Omega}$, where $\{\hat{\Omega}_j\}_j$ is a partition of $\hat{\Omega}$ such that $\hat{\Omega}=\dot{\bigcup} \hat{\Omega}_j$ and $g^j$ solutions to the SME. 
We compute the expectation of the random variables $Z_t 1_{\hat{\Omega}_j}$ and we use the stationarity of the process, for all $t\geq 0$
\begin{align*}
	g^j_t \hat{\mathbb{P}}(\hat{\Omega}_j)=\hat{\mathbb{E}}[Z_t 1_{\hat{\Omega}_j}]=\hat{\mathbb{E}}[Z_0 1_{\hat{\Omega}_j}]=g^j_0\hat{\mathbb{P}}(\hat{\Omega}_j)\, ,\quad \forall j\in \{1,\cdots,n\}\, .
\end{align*}
Hence every $t\mapsto g^j_t$ is constant. On the other hand, from Theorem \ref{th:intro_solution_stochastic} b., the map $t\mapsto Z_t$ is not constant. Thus $Z$ cannot be decomposed in a countable sum of indicator functions.

\subsection{Theorem \ref{th:intro_solution_stochastic} e. }
Assume now that $Z_t(x,\hat{\omega})=\sum_{j=1}^n g^j_t(\hat{\omega}) 1_{A_j}(x)$ for $A_j\subset\mathcal{B}_D$ disjoint and constituting a partition of $D$ and for $x\mapsto g^j_t(x,\omega)$ constant $\hat{\mathbb{P}}$-a.s.~in $A_j$. On each $A_j$, from a small modification of Lemma \ref{lemma:condition_for_non_trivial}, 
\begin{align*}
\hat{\mathbb{E}}[\langle 1_{A_j} Z_t|\partial_x Z_t|^2\rangle \cdot \langle 1_{A_j} Z_t\rangle]-\hat{\mathbb{E}}[\langle 1_{A_j}  h^2 Z_t \rangle\cdot \langle 1_{A_j} Z_t\rangle]+\hat{\mathbb{E}}[|\langle 1_{A_j}  h Z_t\rangle |^2]=0\, .
\end{align*}
Since $Z_t1_{A_j}=g^j_t$ is constant in space, then $Z_t1_{A_j}=\langle Z_t1_{A_j}\rangle =\langle g^j_t \rangle$ and the previous equation reads
\begin{align*}
-\hat{\mathbb{E}}[\langle 1_{A_j}  h^2 Z_t \rangle\cdot \langle 1_{A_j} Z_t\rangle]+\hat{\mathbb{E}}[|\langle 1_{A_j}  h Z_t\rangle |^2]=-\hat{\mathbb{E}}[\langle  h^2  \rangle|g^j_t|^2]+\hat{\mathbb{E}}[|\langle  h \rangle |^2|g^j_t|^2]=0\, .
\end{align*}
This leads to $h=\langle h\rangle$: this cannot be, since $\partial_x h\neq 0$.

\section{An estimate from below for the set of time and space dynamic trajectories.}\label{sec:lower_bound}
Let $Z$ be a statistical solution as constructed in Theorem \ref{th:intro_solution_stochastic} from the stochastic LLG equation with $h\in W^{1,\infty}$, with $\partial_x h\neq 0$.
The stationary statistical solution $Z$ satisfies for all $t\geq 0$ the equality
\begin{align}\label{eq:label_IIF_limit}
\hat{\mathbb{E}}[\langle Z_t|\partial_x  Z_t|^2\rangle \cdot \langle Z_t\rangle]-\hat{\mathbb{E}}[\langle h^2 Z_t \rangle\cdot \langle Z_t\rangle]=-\hat{\mathbb{E}}[|\langle  h Z_t\rangle |^2]\, .
\end{align}
We partition $\hat{\Omega}=\hat{\Gamma}\; \dot{\cup}\; \hat{\Gamma}^C$, where
$\hat{\Gamma}:=\{\hat{\omega}\in \hat{\Omega}: \|\partial_x Z_t(\hat{\omega})\|^2_{L^2}=0  \quad\forall t\geq 0\}\, .$ We can then rewrite \eqref{eq:label_IIF_limit} as
\begin{align}\label{eq:proof_lower_bound}
2\hat{\mathbb{E}}[1_{\hat{\Gamma}^C}\langle Z_t|\partial_x  Z_t|^2\rangle \cdot \langle Z_t\rangle]-2\hat{\mathbb{E}}[1_{\hat{\Gamma}^C}\langle h^2 Z_t \rangle\cdot \langle Z_t\rangle]+2\hat{\mathbb{E}}[1_{\hat{\Gamma}^C}|\langle  h Z_t\rangle |^2]=2(\langle h^2\rangle -\langle h \rangle^2) \hat{\mathbb{P}}(\hat{\Gamma})\, .
\end{align}
Observe that we can rewrite $\langle h^2 Z_t \rangle\cdot \langle Z_t\rangle$ as
\begin{align*}
-2\langle h^2 Z_t \rangle\cdot \langle Z_t\rangle
&=\langle |h Z_t - h \langle Z_t\rangle|^2\rangle -\langle |h \langle Z_t\rangle|^2 \rangle-\langle |hZ_t|^2\rangle\, .
\end{align*}

Note that $\hat{\mathbb{E}}[|\langle  h Z_t\rangle |^2]-\hat{\mathbb{E}}[|
\langle  |h Z_t|^2\rangle ]\leq 0$, since we can see it as the expectation of minus the spatial variance of the random variable $ h Z_t$ on the probability space $(D,\mathcal{B}_D, \dd x/|D|)$.
In view of the previous considerations, we rewrite \eqref{eq:proof_lower_bound} as
\begin{align}\label{eq:main_ineq}
2\hat{\mathbb{E}}[1_{\hat{\Gamma}^C}\langle Z_t|\partial_x  Z_t|^2\rangle \cdot \langle Z_t\rangle]+\hat{\mathbb{E}}[1_{\hat{\Gamma}^C}\langle |h Z_t - h \langle Z_t\rangle|^2\rangle]+\hat{\mathbb{E}}[1_{\hat{\Gamma}^C}|\langle  h Z_t\rangle |^2]>2(\langle h^2\rangle -\langle h \rangle^2) \hat{\mathbb{P}}(\hat{\Gamma})\, .
\end{align}
From the Cauchy-Schwarz inequality,
\begin{align*}
2\hat{\mathbb{E}}[1_{\hat{\Gamma}^C}\langle Z_t|\partial_x  Z_t|^2\rangle \cdot \langle Z_t\rangle]\leq 2\frac{\hat{\mathbb{P}}(\hat{\Gamma}^C)^{1/2}}{|D|} \hat{\mathbb{E}}[\|\partial_x Z_t\|^4_{L^2}]^{1/2}\, .
\end{align*}
We follow the lines of  Lemma \ref{lemma:a_priori_bound_derviative_p}: by evaluating \eqref{eq:00} for $p=4$, it holds
\begin{align}
\mathbb{E}[\|\partial_x Z_t\|^{2}_{L^2}\|Z_t\times \partial^2_x Z_t\|_{L^2}^2 ]&=\mathbb{E}[\|\partial_x Z_t\|^{2}_{L^2}\|\partial_x h\|^2_{L^2}]+\mathbb{E}\left[\left|\int_{D} \partial_x h Z_t\times \partial_x Z_t\dd x\right|^2 \right] \nonumber\, .
\end{align}
Using this fact together with the Poincaré inequality and the Cauchy-Schwarz inequality, 
\begin{align*}
\hat{\mathbb{E}}[\|\partial_x Z_t\|^4_{L^2}]\leq C_p \hat{\mathbb{E}}[\|\partial_x Z_t\|^2_{L^2} \|Z_t\times\partial^2_x Z_t\|^2_{L^2}]\leq 2C_p \|\partial_x h\|^2_{L^2} \hat{\mathbb{E}}[ \|\partial_x Z_t\|^2_{L^2}]\leq 2C_p^2 \|\partial_x h\|^4_{L^2} \, ,
\end{align*}
from which the estimate
\begin{align}\label{eq:ineq_1}
2\hat{\mathbb{E}}[1_{\hat{\Gamma}^C}\langle Z_t|\partial_x  Z_t|^2\rangle \cdot \langle Z_t\rangle]\leq 2\frac{\hat{\mathbb{P}}(\hat{\Gamma}^C)^{1/2}}{|D|} [2C_p^2\|\partial_x h\|^4_{L^2}]^{1/2}\, .
\end{align}
Again from the Cauchy-Schwarz inequality and the fact that $|Z_t|_{\mathbb{R}^3}=1$, it holds
\begin{align}\label{eq:ineq_2}
\hat{\mathbb{E}}[1_{\hat{\Gamma}^C}|\langle  h Z_t\rangle |^2]\leq  \langle |h| \rangle^2\hat{\mathbb{P}}(\hat{\Gamma}^C)\, .
\end{align}
The Cauchy-Schwarz inequality, the Poincaré-Wirtinger inequality (see Theorem \ref{teo:Poin_wirt}) and the fact that $h\in W^{1,\infty}$ imply
\begin{equation}
\begin{aligned}\label{eq:ineq_3}
\hat{\mathbb{E}}[1_{\hat{\Gamma}^C}\langle |h Z_t - h \langle Z_t\rangle|^2\rangle]
&\leq C_p\|h\|^2_{L^\infty} \frac{\hat{\mathbb{P}}(\hat{\Gamma}^C)^{1/2}}{|D|} \hat{\mathbb{E}}[\|\partial_x Z_t\|^4_{L^2}]^{1/2}\\
&\leq C_p \|h\|^2_{L^\infty} \frac{\hat{\mathbb{P}}(\hat{\Gamma}^C)^{1/2}}{|D|} [2C_p^2\|\partial_x h\|^4_{L^2}]^{1/2}\, .
\end{aligned}
\end{equation}
By inserting \eqref{eq:ineq_1}, \eqref{eq:ineq_2} and \eqref{eq:ineq_3} into \eqref{eq:main_ineq}, and by substituting $\hat{\mathbb{P}}(\hat{\Gamma})=1-\hat{\mathbb{P}}(\hat{\Gamma}^C)$ we the estimate from below on $\hat{\mathbb{P}}(\hat{\Gamma^C})$
\begin{align}\label{eq:estimate_below}
(2+\|h\|^2_{L^\infty}C_p)\frac{\hat{\mathbb{P}}(\hat{\Gamma}^C)^{1/2}}{|D|} \sqrt{2}C_p\|\partial_x h\|^2_{L^2}+[ \langle|h| \rangle^2+2(\langle h^2\rangle -\langle h \rangle^2)]\hat{\mathbb{P}}(\hat{\Gamma}^C)>2(\langle h^2\rangle -\langle h \rangle^2) \, .
\end{align}
By fixing $h$, inequality \eqref{eq:estimate_below} can be used to compute a lower bound for the probability $\hat{\mathbb{P}}(\hat{\Gamma}^C)$. Observe that the lower bound obtained in inequality \eqref{eq:estimate_below} can be improved, for instance, by using different weights in the Cauchy-Schwarz inequality in \eqref{eq:ineq_1} and \eqref{eq:ineq_3}: giving a power closer $1$ to $\hat{\mathbb{P}}(\hat{\Gamma}^C)$ can optimize the estimate. We leave this fact to the interested reader.

As an explicit example, consider $h(x)=\alpha\cos(x)$ on $D:=[0,2\pi k]$, for $\alpha>0$ and $k\in \mathbb{N}$. In this case $\langle h\rangle=0$,  $\langle h^2\rangle =\langle [\partial_x h]^2\rangle=k\pi/(2 k \pi)=1/2 $, $C_p=1$, $\langle|h| \rangle^2=(4k)^2/(2 k \pi)^2=4/\pi^2$. By setting $\lambda=\hat{\mathbb{P}}(\hat{\Gamma}^C)$, inequality \eqref{eq:estimate_below} reads
\begin{align*}
\lambda\left(1+\frac{4}{\pi^2}\right)+\frac{\sqrt{2}(2+\alpha^2)}{2}\sqrt{\lambda}-1>0\, .
\end{align*}
By choosing $\alpha=0.1$, we conclude that $\hat{\mathbb{P}}(\hat{\Gamma}^C)>0.2298$.

\section{Existence of stationary solutions to the stochastic Schr\"odinger map equation.}\label{sec:stationary_solutions}
A small modification of the previous argument allows us to prove existence of stationary solutions to the stochastic SME, 
\begin{align}\label{SSME}
 u_{t}-u_s=\int_{s}^{t} u_r\times \partial_x^2 u_r \dd r+\int_{s}^{t} u_r\times \circ \dd W_r\, ,
\end{align}
with null Neumann boundary conditions.
We approximate the SSME by means of the LLG equation in 1D with spherical noise, i.e.~for every $\nu\in(0,1]$ we consider a stationary solution $z^\nu$  to the stochastic Landau-Lifschitz-Gilbert equation in 1D (see Proposition \ref{pro:a_priori_approx_nu}),
\begin{align}\label{LLG_2_nu}
 z^\nu_t-z^\nu_s=\int_{s}^{t} z^\nu_r\times \partial_x^2 z^\nu_r\dd r-\nu \int_{s}^{t} z^\nu_r\times [z^\nu_r\times \partial_x^2 z^\nu_r]\dd r+ \int_{s}^{t} [\sqrt{\nu} h+1] z^\nu_r\times \circ \dd W_r\, .
\end{align}
The a priori estimates are the same as above, thus it follows that there exists a stationary solution in this case. Before stating the main result of this section, we provide a definition of stationary martingale solution that we employ.
\begin{definition} (Stationary martingale solution)
We say that $(\hat{\Omega}, \hat{\mathcal{F}},(\hat{\mathcal{F}}_t)_t,\hat{\mathbb{P}}, B,Z )$ is a \textit{stationary martingale solution} to \eqref{SSME} if
\begin{itemize}
\item $(\hat{\Omega}, \hat{\mathcal{F}}, (\hat{\mathcal{F}}_t)_t,\hat{\mathbb{P}})$ is a complete filtered probability space,
\item $B$ is a $\mathbb{R}^3$-valued $(\hat{\mathcal{F}}_t)_t$-Brownian motion,
\item $Z$ is a $H^1$-valued stochastic process stationary on $H^1$, whose trajectories belong $\hat{\mathbb{P}}$-a.s.~to $C([0,+\infty);L^2)$ and such that for all $t>0$ the process $(Z,B)$ fulfils \eqref{SSME} $\hat{\mathbb{P}}$-a.s.~in $L^2$.
\end{itemize}
\end{definition}
In the remaining part of this section, we prove Theorem \ref{th:existence_stationary}.
\begin{proof} (of Theorem \ref{th:existence_stationary}) Note that $Y$ in the statement corresponds to $Z$ in the proof that follows.
	We approximate the stochastic Schr\"odinger map equation by means of the stochastic Landau-Lifschitz-Gilbert equation \eqref{LLG_2_nu}.  The energy equalities read
	\begin{align*}
	|z^\nu_t|_{\mathbb{R}^3}^2=|z^\nu_0|_{\mathbb{R}^3}^2\, ,
	\end{align*}
	\begin{align*}
	\mathbb{E}\|\partial_x z^\nu_t\|^2_{L^2}=\mathbb{E}\|\partial_x z^\nu_0\|^2_{L^2}-\nu\int_{0}^{t}\mathbb{E}\|z^\nu_r\times \partial_x^2 z^\nu_r\|^2_{L^2}\dd r+t\nu\|\partial_x h\|^2_{L^2}\, .
	\end{align*}
	The a priori estimates coincide and the tightness argument is very similar to the proof of Theorem \ref{th:existence_statistical_solutions}: the main difference consists in the fact that we need to identify a martingale solution and the stochastic integral on the new probability space. We prove in detail this last assertion. Define $\mu_W$ to be the law of the Brownian motion on $(\Omega,\mathcal{F},\mathbb{P})$. Define the space $\mathcal{Y}_W:=C([0,+\infty); \mathbb{R}^3)$ and let $\mathcal{Y}$ be as in \eqref{eq:Y_grande}. Then $\mu_W$ is tight on $\mathcal{Y}_W$. 
	From Tichonoff's Theorem, $\mu_\nu \otimes \mu_W$ is tight on $\mathcal{Y}\times \mathcal{Y}_W$.  From the Skorokhod-Jakubowski theorem, there exists a new probability space $(\hat{\Omega}, \hat{\mathcal{F}}, \hat{\mathbb{P}})$, a sequence $(\nu_j)_j$ converging to $0$ and random variables $(Z^j, B^j)$, $(Z,B)$ with values in  $\mathcal{Y}\times \mathcal{Y}_W$ such that
	\begin{itemize}
		\item $(Z^j, B^j)$ and $(z^{\nu_j}, W)$ have the same law
		\item $(Z^j, B^j)$  converges $\hat{\mathbb{P}}$-a.s. to $(Z, B)$ on $\mathcal{Y}\times \mathcal{Y}_W$.
	\end{itemize}
	Define $(\hat{\mathcal{F}}^j_t)_t$ to be the $\hat{\mathbb{P}}$-augmented canonical filtration generated by $(Z^j, B^j)$ and $(\hat{\mathcal{F}}_t)_t$ to be the $\hat{\mathbb{P}}$-augmented canonical filtration generated by $(Z, B)$.
	Since $Z$ is a continuous stochastic process, it is progressively measurable with respect to its canonical filtration. Thus $Z$ is progressively measurable with respect to $(\hat{\mathcal{F}}_t)_t$. 
	
	We discuss now the identification of the limit for $j\rightarrow +\infty$, namely we prove that $Z$ is a solution to the SSME with multiplicative noise.
	Define the additional processes $M^{z^j}$, $M^j$ and $M$, given for all $j\in \mathbb{N}$ and for all $\phi\in H^1$ by
	\begin{align*}
	M^{z^j}_t:=\big \langle z^{\nu_j}_t-z^{\nu_j}_0 -\int_{0}^{t}z^{\nu_j}_r\times \partial^2_x z^{\nu_j}_r \dd r-(\sqrt{\nu_j}h+1)^2\int_{0}^{t}z^{\nu_j}_r \dd r+\nu_j \int_{0}^{t} z^{\nu_j}_r\times [z^{\nu_j}_r\times \partial_x^2 z^{\nu_j}_r]\dd r, \phi\big \rangle\, ,
	\end{align*}
	\begin{align*}
	M^{j}_t:= \big \langle Z^j_t-Z^j_0 -\int_{0}^{t}Z^j_r\times \partial^2_x Z^j_r \dd r-(\sqrt{\nu_j}h+1)^2\int_{0}^{t}Z^j_r \dd r+\nu_j \int_{0}^{t} Z^j_r\times [Z^j_r\times \partial_x^2 Z^j_r]\dd r\, , \phi\big \rangle\, ,
	\end{align*}
	\begin{align*}
	M_t:= \big \langle Z_t-Z_0 -\int_{0}^{t}Z_r\times \partial^2_x Z_r \dd r-\int_{0}^{t}Z_r \dd r \, , \phi\big \rangle\, .
	\end{align*}
	We need to prove that 
	\begin{itemize}
		\item[1.] $B$ is a Brownian motion with respect to $(\hat{\mathcal{F}}_t)_t$,
		\item[2.]  the following processes are $(\hat{\mathcal{F}}_t)_t$-martingales:
		\begin{align*}
		M\, , \quad G_{\cdot}:=\quad M^2-\int_{0}^{\cdot}\left|\int_DZ_r\times \phi \dd x\right|^2\dd r\, ,\quad \quad MB^k-\int_{0}^{\cdot}\int_D(Z_r\times \phi )^k\dd x\dd r\, ,
		\end{align*}
	\end{itemize}
	for $k=1,2,3$.
	The two above statements together imply that the quadratic variation of
	\begin{align*}
	M-\int_{0}^{\cdot} \langle  Z_r \times \dd B_r, \phi \rangle 
	\end{align*}
	is $0$ and thus that $\hat{\mathbb{P}}$-a.s.
	\begin{align*}
	\langle Z_t-Z_0,\phi\rangle -\int_{0}^{t}\langle Z_r\times \partial_x^2 Z_r ,\phi \rangle \dd r -\int_{0}^{t}\langle  Z_r,\phi \rangle \dd r=M_t=\int_{0}^{t} \langle  Z_r \times \dd B_r, \phi \rangle \, .
	\end{align*}
	We show that the quadratic variation of $M$ has the displayed form in the computations below.
	The first assertion follows from the fact that a Brownian motion is defined completely by its law: hence $B$ is a Brownian motion with respect to its canonical filtration. To show that  $B$ is a $(\hat{\mathcal{F}}_t)_t$-Brownian motion, we need to prove that the filtration $(\hat{\mathcal{F}}_t)_t$ is non-anticipative with respect to $B$. This follows as in Lemma \cite[Lemma 2.9.3]{martina_book}, since $B^j$ is a $(\hat{\mathcal{F}}^j_t)_t$-Brownian motion and thus the filtration $(\hat{\mathcal{F}}^j_t)_t$ is non-anticipative with respect to $B^j$. By \cite[Corollary 2.1.36]{martina_book},  $B$ is a $(\hat{\mathcal{F}}_t)_t$-Brownian motion.
	
	We prove that $G$ is an $(\hat{\mathcal{F}}_t)_t$-martingale. Since $M^{z^j}$ is a continuous and square integrable $(\mathcal{F}_t)_t$-martingale such that $M^{z^j}_0=0$, also the stochastic process
	\begin{align*}
	G^{z^j}:= M^{z^j, 2}-\int_{0}^{\cdot}\left|\int_D (\sqrt{\nu_j}h+1) z^{\nu^j}_r\times\phi\dd x\right|^2\dd r
	\end{align*}
	is an $(\mathcal{F}_t)_t$-martingale, indeed the quadratic variation of  $M^{z^j}$ is 
	\begin{align*}
	\int_{0}^{\cdot}\left|\int_D (\sqrt{\nu_j}h+1) z^{\nu^j}_r\times\phi\dd x\right|^2\dd r\, .
	\end{align*}
	From the property of the cross product $a\times b\cdot c= -a\times c\cdot b$, which holds for all $a,b,c \in \mathbb{R}^3$, we infer that 
	\begin{align*}
	\int_{0}^{t} \langle (\sqrt{\nu_j}h+1) z^{\nu^j}_r \times \dd W_r, \phi \rangle &=-\int_{0}^{t} \int_D (\sqrt{\nu_j}h+1) z^{\nu^j}_r \times \phi\dd x \cdot \dd W_r\\
	&=-\sum_{i=1}^{3}\int_{0}^{t} \left[\int_D (\sqrt{\nu_j}h+1) z^{\nu^j}_r \times \phi\dd x\right]^i  \dd W^{i}_r=:\sum_{i=1}^{3} I^i_t \, .
	\end{align*}
	From the independence of the components of the Brownian motion $W^i, W^k$, for $i\neq k\in \{1,2,3\}$, the stochastic processes $I^iI^k$ are $(\mathcal{F}_t)_t$-martingales for $i\neq k\in \{1,2,3\}$ such that the quadratic covariation $\langle\langle I^k, I^i \rangle  \rangle=0$ (see e.g. \cite[Lemma 8.1]{Baldi}). From \cite[Theorem 7.3]{Baldi}, 
	\begin{align*}
	(I^{i}_t)^2 -\int_{0}^{t}\left(\left[\int_D (\sqrt{\nu_j}h+1) z^{\nu^j}_r \times \phi\dd x\right]^{i}\right)^2 \dd r
	\end{align*}
	is an $(\mathcal{F}_t)_t$-martingale. Hence, 
	\begin{align*}
	\langle \langle \sum_{i=1}^{3} I^i_t  \rangle \rangle &= \sum_{i=1}^{3} \int_{0}^{t}\left(\left[\int_D (\sqrt{\nu_j}h+1) z^{\nu^j}_r \times \phi\dd x\right]^{i}\right)^2 \dd r= \int_{0}^{t}\left|\int_D (\sqrt{\nu_j}h+1) z^{\nu^j}_r \times \phi\dd x\right|^{2} \dd r\, .
	\end{align*}
	Fix $t \in [0,T]$ and let $\tilde{h}:\mathcal{Y}|_{[0,t]}\times \mathcal{Y}_W |_{[0,t]}\mapsto [0,1]$ be a continuous function. From the martingale property and for all $s\leq t$
	\begin{align*}
	\mathbb{E}\left[\tilde{h}(\rho_t z^{\nu^j},\rho_t W)G^{z^j}_t\right]=\mathbb{E}\left[\tilde{h}(\rho_s z^{\nu^j},\rho_s W)G^{z^j}_s\right]\, ,
	\end{align*}
	where $\rho_t: C([0,T]; \tilde{B})\rightarrow C([0,t]; \tilde{B})$ is a restriction operator for a Banach space $\tilde{B}$.
	We want to pass the martingale property from the approximation to the limit process $Z$. Define the map
	\begin{align*}
	G^{j}:= (M^{j})^2-\int_{0}^{\cdot}\left|\int_D (\sqrt{\nu_j}h+1) Z^j_r\times\phi\dd x\right|^2\dd r\, .
	\end{align*}
	From the equality in law due to the Skorokhod's theorem, it holds
	\begin{align*}
	\hat{\mathbb{E}}\left[\tilde{h}(\rho_t Z^{j},\rho_t W^j)G^j_t\right]=\hat{\mathbb{E}}\left[\tilde{h}(\rho_s Z^{j},\rho_sW^j)G^j_s\right]\, .
	\end{align*}
	To conclude the proof, we apply Vitali's convergence theorem to pass to the limit for $j\rightarrow +\infty$ under the sign of expectation. For $\phi\in H^1$, 
	\begin{align*}
	\mathbb{E}[|M^{z^j}_t|^4]\leq 2|D|^4\|\phi\|_{L^2}^4+\|\phi\|_{H^1}^4\mathbb{E}[\|\partial_x z^{\nu^j}\|^4_{L^4(L^2)}]<C\, ,
	\end{align*}
	which is uniformly bounded in $j$. Hence, by passing to the limit under the sign of expectation, $G$ is a $(\hat{\mathcal{F}}_t)_t$-martingale (the identification of the limit in the non-linearities is analogous as in the previous section on statistically stationary solutions to the SME).
	With analogous calculations, $M$ and $MB^k-\langle \langle MB^k \rangle \rangle$ are $(\hat{\mathcal{F}}_t)_t$-martingales. In conclusion, $(\hat{\Omega},\hat{\mathcal{F}}, (\hat{\mathcal{F}}_t)_t,\hat{\mathbb{P}},B, Z)$ is a martingale solution to the SSME.
	
	For part c., the proof coincides with the proof of Theorem \ref{th:solution_stochastic}: in particular, the conservation law $|\langle Z_t \rangle |^2$ is achieved by noticing that, since the stochastic integral in the SSME is space independent, the stochastic integral in the evolution of  $|\langle Z_t \rangle |^2$ 
	\begin{align*}
		\int_0^t \langle Z_r \rangle\cdot \langle Z_r \rangle\times \circ \dd W_r=0\, ,
	\end{align*}
	vanishes, since $a\times b\cdot a=0$ $\forall a,b,\in \mathbb{R}^3$. An analogous argument holds for the conservation law $\|\partial_x Z\|_{L^2}$. 
	Concerning Theorem \ref{th:existence_stationary} d., from Lemma \ref{lemma:condition_for_non_trivial} and the same proof of Theorem \ref{th:intro_solution_stochastic} a., the stationary solution  $Z$ such that $\hat{\mathbb{P}}(\|\partial_x Z_t\|_{L^2}^2>0)>0$ if and only if  $\partial_x h\neq 0$. 
	
	To prove the assertion in Theorem \ref{th:existence_stationary} e., we observe that the stationary spherical Brownian motion is a stationary solution to the stochastic LLG equation. 
	We construct a non-constant in space stationary solution by taking a space dependent $h$: from Theorem \eqref{th:solution_stochastic} d., this choice implies the existence of a stationary solution with non-trivial dynamics and thus different from the stationary Brownian motion. This shows that there are more stationary solutions to the SSME.
\end{proof}

\section{Relations with the statistically stationary solutions to other equations.}\label{sec:stat_sol_other_eq}
As discussed in R.~L.~Jerrard, D.~Smets \cite{jerrard_smets_T_1_S_2}, the Schr\"odinger map equation is related to the binormal curvature flow equation and to the cubic non-linear Schr\"odinger equation by means of certain geometric transformations, which we discuss in Section \ref{sec:other_equations}. 
We approach and discuss the following questions:
\begin{enumerate}
	\item Can we apply rigorously those transforms to the trajectories of the statistically stationary solutions?
	\item Are the transformed processes also statistically stationary solutions to the corresponding equations?
\end{enumerate}

\subsection{On relations of the Schr\"odinger map equation with the binormal curvature flow equation.}\label{sec:other_equations}
We follow the considerations in Section 1.1 in R.~L.~Jerrard, D.~Smets \cite{jerrard_smets_T_1_S_2}. From Theorem \ref{th:existence_statistical_solutions}, each trajectory $Z$ of the statistically stationary solutions to the SME belongs to $C([0,T];H^1)\cap L^\infty(0,T;H^2)$. Define for all $x\in \textcolor{black}{[0,2\pi]}$ and for all $t\in [0,T]$,
\begin{align*}
	\Gamma_t(x):=\int_{0}^{x} Z_t(z)\dd z \, .
\end{align*} 
By definition $Z_t(x)=\partial_x \Gamma_t (x)$, hence it holds
\begin{align*}
	\partial_t  \partial_x \Gamma_t (x)  =\partial_x[\partial_x \Gamma_t (x)\times \partial_x \partial_x \Gamma_t (x)] \, \implies \partial_x[	\partial_t  \Gamma_t (x)- \partial_x \Gamma_t (x)\times \partial^2_x  \Gamma_t (x)]=0\, .
\end{align*}
For any continuous map $t\mapsto g_t$ constant in space, by integrating in time
\begin{align*}
	\Gamma_t=\Gamma_0+\int_0^t \partial_x \Gamma_r\times \partial^2_x  \Gamma_r\dd r+\int_0^t g_r \dd r\, .
\end{align*}
By defining $\gamma_t(x):=\Gamma_t(x)-\int_{0}^{t}g_r\dd r$, each solution $\gamma$ fulfils
\begin{align}\label{eq:binormal_curvature_flow}
\gamma_{t}-\gamma_0=\int_{0}^{t} \partial_x \gamma_r \times \partial^2_x  \gamma_r \dd r\, ,
\end{align}
under the arc length condition $|\partial_z \gamma|^2=1$. 
Define the map $f:H^2\rightarrow H^3$ as 
\begin{align}\label{eq:map_SME}
f(v):=\int_{0}^{\cdot} v(x)\dd x\, .
\end{align}
We denote by $X_t(\hat{\omega}):=f(Z_t(\hat{\omega}))$ for $\hat{\mathbb{P}}$-a.e.~$\hat{\omega}\in \hat{\Omega}$, where $Z$ is a statistically stationary solution to \eqref{eq:schroedinger_eq}. 
We answer to the following question: is $X$ a statistically stationary solution to the binormal curvature flow \eqref{eq:binormal_curvature_flow}? The answer is affirmative and it is contained in Theorem \ref{th:statisticalBCF}. The proof is below.
\begin{proof} (proof of Theorem \ref{th:statisticalBCF})
 Since $Z(\hat{\omega})$ is in $C([0,T];H^1)\cap L^\infty(0,T;H^2)$ $\hat{\mathbb{P}}$-a.s., we can apply the transformation \eqref{eq:map_SME} $\hat{\mathbb{P}}$-a.s. and conclude that each trajectory $X(\hat{\omega})$ is a solution to the BCF. From a small adaptation of \cite[Corollary 2.11.9]{martina_book}, $X$ is stationary.
 
 We show the non-triviality results. Observe that $\partial_x X$ is never null, since $Z$ is sphere valued: in particular $|\partial_x X_t|^2=1$ for a.e.~$x\in D$ and for all $t\geq 0$.
 To prove (i), $\partial_x^2 X\neq 0$ if and only if $\partial_x Z\neq 0$. Then from Theorem \ref{th:intro_solution_stochastic} a.,  $\partial_x Z,\partial_x^2 Z= 0$ if and only if $\partial_x h=0$ and the claim follows. 
 
To prove (ii) under the assumption $\partial_x h\neq0$, observe that from (i) there exists a set of positive probability where $\partial_x X\times\partial_x^2 X\neq 0$. On the other hand, by taking expectation in the equation $\hat{\mathbb{E}}[\partial_x X\times\partial_x^2 X]=0$  and thus the map $\hat{\omega}\mapsto X(\hat{\omega})$ is not constant. 
We prove (ii) under the assumption that  $\partial_x h=0$: in this case, $Z=\langle Z\rangle$ and we can rewrite $X_t(x)=Z_t x$. From Lemma \ref{lemma:stochasticity_partial_x_h_0}, $\hat{\mathbb{E}}[X_t]=x\hat{\mathbb{E}}[Z_t]=0$ and, since $|\partial_x X_t|^2=1$, the process $X$ must be stochastic.

To prove (iii), observe that if $\partial_x h\neq 0$ then we have non-trivial dynamics in time for the map $t\mapsto X_t$ from (i) on a set of positive measure. To show the other implication, we prove that if $\partial_x h=0$, then $t\mapsto X_t$ is constant: if $\partial_x h=0$, then from Theorem \ref{th:intro_solution_stochastic} c. $t\mapsto Z_t$ is constant and, by definition, also $t\mapsto X_t$.
\end{proof}

\appendix 
\section{Useful inequalities.}\label{sec:appendix_a}
\subsection{On Poincaré and Poincaré-Wirtinger inequalities.}
We recall the classical Poincaré inequality 
\begin{theorem}\label{teo:poincare}(Poincaré inequality) Let $D\subset \mathbb{R}^n$ be an open bounded domain. Then for every $p\in [1,\infty)$ there exists a constant $C\equiv C(p,D,n)>0$, depending only on $p,D,n$, such that for all $v\in W^{1,p}_0(D)$
	\begin{align*}
	\|v\|_{L^p(D)}\leq C \|\nabla v\|_{L^p(D)}\, .
	\end{align*}
\end{theorem}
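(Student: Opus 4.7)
The plan is to prove the inequality first for smooth compactly supported functions and then extend it to all of $W^{1,p}_0(D)$ by the density of $C^\infty_c(D)$ in $W^{1,p}_0(D)$. Since $D$ is bounded, there exists $d>0$ such that $D$ is contained in a slab of the form $\{x\in\mathbb{R}^n : 0<x_1<d\}$ after a translation; extending any $v\in C^\infty_c(D)$ by zero, the extension belongs to $C^\infty_c(\mathbb{R}^n)$ and vanishes outside this slab.

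The key step is the one-dimensional fundamental theorem of calculus in the $x_1$ direction: for $x=(x_1,x')\in D$, write
\begin{align*}
v(x_1,x')=\int_0^{x_1}\partial_{y_1}v(y_1,x')\,\dd y_1.
\end{align*}
For $p>1$, apply Hölder's inequality with exponent $p/(p-1)$ to the right-hand side to obtain the pointwise bound
\begin{align*}
|v(x_1,x')|^p\leq d^{p-1}\int_0^d |\partial_{y_1}v(y_1,x')|^p\,\dd y_1,
\end{align*}
and integrate in $x$ over the slab, using Fubini, to reach $\|v\|_{L^p}^p\leq d^{p}\|\partial_{x_1}v\|_{L^p}^p\leq d^p\|\nabla v\|_{L^p}^p$. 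The case $p=1$ is even simpler, since no Hölder estimate is needed; the same computation yields $\|v\|_{L^1}\leq d\,\|\partial_{x_1}v\|_{L^1}$.

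Finally, to pass from $v\in C^\infty_c(D)$ to a general $v\in W^{1,p}_0(D)$, approximate $v$ by a sequence $(v_k)_k\subset C^\infty_c(D)$ converging to $v$ in $W^{1,p}(D)$ (which exists by the very definition of $W^{1,p}_0(D)$ for $p\in[1,\infty)$). Apply the inequality to each $v_k$ and pass to the limit in both sides, which is legitimate because convergence in $W^{1,p}$ implies convergence of both $\|v_k\|_{L^p}$ and $\|\nabla v_k\|_{L^p}$. One obtains the same constant $C=d$, depending only on the diameter (hence on $D$), with no dependence on $p$ beyond what is hidden in the bounded domain. The argument is classical and presents no real obstacle; the only delicate point to flag is that the density of $C^\infty_c(D)$ in $W^{1,p}_0(D)$ is the definition of the latter space, so no regularity assumption on $\partial D$ is needed.
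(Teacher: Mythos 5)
Your proof is correct and complete: the reduction to $C^\infty_c(D)$ by the definition of $W^{1,p}_0(D)$, the fundamental-theorem-of-calculus representation in the $x_1$ direction over a slab of width $d$ containing $D$, the Hölder step giving $|v(x_1,x')|^p\leq d^{p-1}\int_0^d|\partial_{y_1}v|^p\,\dd y_1$, and the final integration via Fubini are all the standard argument, and the density passage to the limit is handled properly. Note that the paper itself states this theorem in the appendix as a recalled classical fact and gives no proof, so there is no in-paper argument to compare against; your write-up would serve as a perfectly adequate self-contained proof, with the pleasant byproduct that the constant $C=d$ depends only on the width of a slab containing $D$ and not on $p$ or $n$.
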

The application of the Poincaré inequality is restricted to functions which are null on the boundary in the sense of the trace. The Poincaré-Wirtinger inequality is an extension of the Poincaré's inequality to the whole $W^{1,p}(D)$ (see e.g. \cite[Corollary 5.4.1]{poincare_wirtinger}), which we recall.
\begin{theorem}\label{teo:Poin_wirt}
	Let $D\subset\mathbb{R}^n$ be an open connected bounded subset with $C^1$ boundary. For all $p\geq 2$ there exists a constant $C\equiv C(p,D,n)>0$, depending only on $p,D,n$,  such that for all $v\in W^{1,p}(D)$
	\begin{align*}
	\bigg\|v-\frac{1}{|D|}\int_{D}v(y)\dd y\bigg\|_{L^p(D)}\leq C \|\nabla v\|_{L^p(D)}\, .
	\end{align*}
\end{theorem}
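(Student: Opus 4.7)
\medskip

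\textbf{Proof proposal for Theorem \ref{teo:Poin_wirt}.} The plan is to argue by contradiction, using the compact embedding $W^{1,p}(D) \hookrightarrow L^p(D)$ (Rellich--Kondrachov), which is where the $C^1$ regularity of $\partial D$ and the boundedness of $D$ enter. Suppose the claimed inequality fails for some fixed $p \geq 2$. Then for every $n \in \mathbb{N}$ there exists $v_n \in W^{1,p}(D)$ such that
\begin{equation*}
\bigl\| v_n - \langle v_n\rangle \bigr\|_{L^p(D)} > n \,\|\nabla v_n\|_{L^p(D)},
\end{equation*}
where $\langle v_n\rangle := |D|^{-1}\int_D v_n(y)\,dy$. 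After translating and rescaling, we set
\begin{equation*}
w_n := \frac{v_n - \langle v_n\rangle}{\bigl\| v_n - \langle v_n\rangle \bigr\|_{L^p(D)}},
\end{equation*}
which is well-defined because the failure of the inequality forces $\|v_n - \langle v_n\rangle\|_{L^p} > 0$. By construction $\|w_n\|_{L^p(D)} = 1$, $\langle w_n\rangle = 0$, and $\|\nabla w_n\|_{L^p(D)} < 1/n$.

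Next I would extract a limit. The sequence $(w_n)$ is bounded in $W^{1,p}(D)$, so by the Rellich--Kondrachov theorem a subsequence (not relabelled) converges strongly in $L^p(D)$ to some $w \in L^p(D)$, with $\|w\|_{L^p(D)} = 1$ and $\langle w\rangle = 0$. Since $\nabla w_n \to 0$ strongly in $L^p(D;\mathbb{R}^n)$ while $w_n \to w$ in $L^p(D)$, a standard distributional argument (testing against $C_c^\infty(D;\mathbb{R}^n)$ vector fields and integrating by parts) shows $\nabla w = 0$ in the sense of distributions, hence $w \in W^{1,p}(D)$ with vanishing weak gradient.

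The last step uses connectedness of $D$: a $W^{1,p}$ function with zero weak gradient on a connected open set is (equivalent to) a constant. Combined with $\langle w \rangle = 0$, this forces $w \equiv 0$, contradicting $\|w\|_{L^p(D)} = 1$. The main obstacle is the compactness step: one needs the $C^1$ regularity of $\partial D$ to ensure the Rellich--Kondrachov embedding $W^{1,p}(D) \hookrightarrow L^p(D)$ is compact (otherwise one would be limited to $W^{1,p}_0$ and would recover only Theorem \ref{teo:poincare}). Once that compactness is in hand, the contradiction scheme is routine; note also that the constant $C$ produced in this way is non-explicit, which is acceptable here since the statement is qualitative.
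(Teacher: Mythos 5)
Your proof is correct: the contradiction--normalization--compactness scheme is the standard argument for the Poincar\'e--Wirtinger inequality, and every step (well-definedness of $w_n$, the Rellich--Kondrachov extraction, the distributional identification $\nabla w=0$, and the use of connectedness to conclude $w$ is constant, hence zero by the mean-zero condition) is sound. Note that the paper does not prove this statement at all; it simply recalls it with a citation to \cite[Corollary 5.4.1]{poincare_wirtinger}, so there is no in-paper argument to compare against. Your remark that the constant obtained this way is non-explicit is apt, and worth keeping in mind since the quantitative estimate \eqref{eq:estimate_below} in Section \ref{sec:lower_bound} uses a Poincar\'e-type constant $C_p$ numerically (there on a specific domain where an explicit constant is available); for the qualitative statement as posed, your argument suffices.
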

\subsection{Interpolation inequalities on a one-dimensional bounded domain.}
Recall that from the Gagliardo-Nirenberg-Sobolev inequality on a bounded one-dimensional domain, the following inequality holds.
\begin{lemma}\label{lemma:interp_ladyz}
	Let $D\subset \mathbb{R}$ be a bounded connected open domain with $C^1$ boundary.
	For all $v\in W^{1,2}(D)$ there exists a constant $C>0$ depending on the domain such that it holds
	\begin{align*}
	\|v\|_{L^4(D)}\leq C \|v\|^{\frac{1}{4}}_{H^1(D)}\|v\|^{\frac{3}{4}}_{L^2(D)}\, .
	\end{align*}
\end{lemma}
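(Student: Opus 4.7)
The plan is to reduce the one-dimensional Ladyzhenskaya inequality to the $L^\infty$ interpolation in 1D, which in turn follows from the fundamental theorem of calculus. Since the statement is on a bounded interval rather than on $\mathbb{R}$, the only genuine preliminary work is to dispose of the boundary.

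First, I would fix a continuous linear extension operator $E: H^1(D) \to H^1(\mathbb{R})$, which exists for any bounded open interval $D$, and which satisfies
\begin{equation*}
\|Ev\|_{L^2(\mathbb{R})}\leq C_1\|v\|_{L^2(D)},\qquad \|Ev\|_{H^1(\mathbb{R})}\leq C_1\|v\|_{H^1(D)},
\end{equation*}
with $C_1>0$ depending only on $D$. Writing $w=Ev$, the one-dimensional identity $w(x)^2=2\int_{-\infty}^{x} w(z)w'(z)\dd z$ combined with the Cauchy-Schwarz inequality yields
\begin{equation*}
\|w\|_{L^\infty(\mathbb{R})}^2\leq 2\|w\|_{L^2(\mathbb{R})}\|w'\|_{L^2(\mathbb{R})}\leq 2\|w\|_{L^2(\mathbb{R})}\|w\|_{H^1(\mathbb{R})}.
\end{equation*}

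Next, I would interpolate $L^4$ between $L^\infty$ and $L^2$ on $D$ itself via the elementary pointwise bound $|v|^4\leq \|v\|_{L^\infty(D)}^2|v|^2$, giving
\begin{equation*}
\|v\|_{L^4(D)}^4\leq \|v\|_{L^\infty(D)}^2\|v\|_{L^2(D)}^2\leq \|w\|_{L^\infty(\mathbb{R})}^2\|v\|_{L^2(D)}^2.
\end{equation*}
Combining the two displays above and using the bounds for $E$ gives
\begin{equation*}
\|v\|_{L^4(D)}^4\leq 2\|w\|_{L^2(\mathbb{R})}\|w\|_{H^1(\mathbb{R})}\|v\|_{L^2(D)}^2\leq 2C_1^2\|v\|_{L^2(D)}^3\|v\|_{H^1(D)},
\end{equation*}
and taking the fourth root produces the claimed inequality with $C=(2C_1^2)^{1/4}$.

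There is no real obstacle here: the only step that requires any care is ensuring the existence of the extension operator $E$, which is standard for a bounded interval (a reflection-and-cutoff construction suffices). All subsequent manipulations are elementary. A purely intrinsic alternative would be to prove the $L^\infty$ interpolation directly on $D$ using the identity $v(x)^2-v(y)^2=2\int_y^x vv'$ and integrating in $y$ over $D$, which avoids the extension altogether but yields an inequality with the full $H^1$ norm on the right, as required by the statement.
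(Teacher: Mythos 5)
Your argument is correct: the extension to $H^1(\mathbb{R})$, the one-dimensional Agmon bound $\|w\|_{L^\infty(\mathbb{R})}^2\leq 2\|w\|_{L^2(\mathbb{R})}\|w'\|_{L^2(\mathbb{R})}$, and the interpolation $\|v\|_{L^4}^4\leq\|v\|_{L^\infty}^2\|v\|_{L^2}^2$ combine exactly as you say to give the stated exponents $\|v\|_{L^4}\leq C\|v\|_{H^1}^{1/4}\|v\|_{L^2}^{3/4}$. The paper, by contrast, does not prove the inequality at all: its ``proof'' is a one-line citation of the Gagliardo--Nirenberg--Sobolev theorem in Adams--Fournier with $n=1$, $q=4$, $m=1$, $p=2$. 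So you have taken a genuinely different (and more informative) route: a self-contained elementary derivation from the fundamental theorem of calculus, at the cost of invoking the existence of a bounded extension operator --- which, as you note, is immediate for a bounded interval by reflection, and which your intrinsic alternative (averaging $v(x)^2-v(y)^2=2\int_y^x vv'$ over $y\in D$) avoids entirely. Either variant is complete and yields a constant depending only on $D$, as required.
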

\begin{proof}
	The assertion follows from e.g. \cite[Theorem 5.8]{Adams_Fournie} with $n=1$, $q=4$, $m=1$, $p=2$.
\end{proof}
We recall also the Agmon's interpolation inequality in one dimension.
\begin{lemma}
	Let $D\subset \mathbb{R}$ be an open bounded domain with $C^1$ boundary. There exists a constant $C>0$ such that for all $v\in H^1(D)$,
	\begin{align*}
	\|v\|_{L^\infty(D)}\leq C\|v\|_{H^1(D)}^{1/2}\|v\|_{L^2(D)}^{1/2}\, .
	\end{align*}
\end{lemma}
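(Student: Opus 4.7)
The plan is to reduce by density to smooth functions and then exploit the one-dimensional fundamental theorem of calculus, averaging out an auxiliary base point. First, I would reduce to $v\in C^1(\overline{D})$: since $C^1(\overline{D})$ is dense in $H^1(D)$ and the embedding $H^1(D)\hookrightarrow C(\overline{D})$ holds in one dimension, it is enough to prove the estimate on the dense subspace and then pass to the limit in the $L^\infty$-norm of the continuous representative.

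For $v\in C^1(\overline{D})$ with $D=(a,b)$ and arbitrary $x,y\in\overline{D}$, the fundamental theorem of calculus applied to $v^2$ yields
\[
v(x)^2 = v(y)^2 + 2\int_y^x v(z)\,v'(z)\,\dd z,
\]
and the Cauchy--Schwarz inequality bounds the last integral by $\|v\|_{L^2(D)}\|v'\|_{L^2(D)}$. I would then average the resulting estimate $v(x)^2\leq v(y)^2 + 2\|v\|_{L^2(D)}\|v'\|_{L^2(D)}$ in $y\in D$ to obtain
\[
v(x)^2 \leq \frac{1}{|D|}\|v\|_{L^2(D)}^2 + 2\|v\|_{L^2(D)}\|v'\|_{L^2(D)}.
\]

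Finally, I would take the supremum over $x\in\overline{D}$, use $\|v\|_{L^2(D)}\leq \|v\|_{H^1(D)}$ and $\|v'\|_{L^2(D)}\leq \|v\|_{H^1(D)}$ to absorb both terms into the single bound $C\,\|v\|_{L^2(D)}\|v\|_{H^1(D)}$ with $C=C(|D|)$, and extract a square root. No substantive obstacle arises: the argument is purely one-dimensional calculus combined with Cauchy--Schwarz, and the only dependence on the domain enters through the averaging factor $|D|^{-1}$, which is precisely what the statement allows.
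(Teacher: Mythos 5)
Your proof is correct. Note that the paper itself states this lemma (the one-dimensional Agmon inequality) without any proof, citing it as a known interpolation result, so there is no argument in the paper to compare against. Your route --- density of $C^1(\overline{D})$ in $H^1(D)$, the identity $v(x)^2=v(y)^2+2\int_y^x v v'\,\dd z$, Cauchy--Schwarz, and averaging over the base point $y$ to dispose of the boundary term --- is the standard elementary derivation and yields the explicit constant $C=(2+|D|^{-1})^{1/2}$ after the final estimates $\|v\|_{L^2}\leq\|v\|_{H^1}$ and $\|v'\|_{L^2}\leq\|v\|_{H^1}$. The only point worth making explicit is the limit passage: once the inequality is established on the dense subspace, applying it to differences $v_n-v_m$ of an $H^1$-Cauchy sequence of smooth approximants shows the sequence is Cauchy in $L^\infty$, which simultaneously justifies the continuous representative and transfers the bound to general $v\in H^1(D)$; your appeal to the embedding $H^1(D)\hookrightarrow C(\overline{D})$ accomplishes the same thing.
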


\section{The It\^o-Stratonovich correction term for the spherical noise.} \label{sec:appendix_b}
 We derive the It\^o-Stratonovich correction term for the equation fulfilled by spherical Brownian motion $y$, 
 \begin{align}\label{eq:SBM_appendix}
 y_t=y^0+\int_{0}^{t} y_r\times \circ \dd W_r\, ,\quad\quad \mathrm{for}\,\, t\geq 0 \, .
 \end{align}
 Recall that $W\equiv(W^1,W^2,W^3)$ is an $\mathbb{R}^3$-valued Brownian motion. Define 
 \begin{align*}
 \Gamma(v)\equiv (\Gamma^{i,j})_{i,j\in\{1,2,3\}}:=\sqrt{\nu} h v\times\cdot \, , \quad \mathrm{for} \quad v\equiv(v_1,v_2,v_3)\in \mathbb{R}^3\, ,
 \end{align*}
 which we rewrite for practical reasons as
 \begin{align}\label{eq:def_B}
 \Gamma(v)= \sqrt{\nu} h
 \left[ {\begin{array}{ccc}
 	0& -v_3  & v_2\\
 	v_3 & 0 &-v_1\\
 	-v_2	& 	v_1	&0 \\
 	\end{array} } \right]\, .
 \end{align}
 To pass from the Stratonovich integral to the It\^o integral in \eqref{eq:SBM_appendix}, we compute the quadratic covariation between $y$ and $W$. We fix $i\in \{1,2,3\}$ and we write
 \begin{align*}
 	\dd y^i= \sum_{k=1}^3\Gamma^{i,k}(y)\dd W^k\, , \quad \dd \Gamma^{i,k}(y)= \sum_{j=1}^3\frac{\partial \Gamma^{i,k}}{\partial y^j}(y)\dd y^j= \sum_{j,\ell=1}^3\frac{\partial \Gamma^{i,k}}{\partial y^j}(y) \Gamma^{j,\ell} (y)\dd W^\ell\, .
 \end{align*}
 By computing the covariation for $i\in \{1,2,3\}$, we obtain
 \begin{align*}
 	C^i(y):=\sum_{j,k=1}^3 \frac{\partial \Gamma^{i,k}}{\partial y^j}(y) \Gamma^{j,k} (y)\, .
 \end{align*}
 Then we can convert \eqref{eq:SBM_appendix} to its It\^o's formulation by
 \begin{align*}
 \int_{0}^{t} y_r\times \circ \dd W_r=\int_{0}^{t} y_r\times  \dd W_r-\frac{1}{2}\int_{0}^{t} C(y_r) \dd r\, .
 \end{align*}
 We compute the precise form of $C(y)$ in the following Lemma \ref{lemma:ito_stratonovich_correction}.
	\begin{lemma}\label{lemma:ito_stratonovich_correction}
	For all $t\geq 0$, it holds
	\begin{align*}
	\sqrt{\nu}\int_{0}^{t} h y_r\times \circ\dd W_r=\sqrt{\nu}\int_{0}^{t} h y_r\times \dd W_r-\nu \int_{0}^{t} h^2 y_r \dd r\, .
	\end{align*}
\end{lemma}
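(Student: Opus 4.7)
The plan is to apply directly the It\^o--Stratonovich correction recipe set up in the preceding paragraphs of the appendix, i.e.~to compute the vector $C^i(y) = \sum_{j,k=1}^3 \frac{\partial \Gamma^{i,k}}{\partial y^j}(y)\,\Gamma^{j,k}(y)$ with $\Gamma(v)$ given by \eqref{eq:def_B} and then substitute into the conversion formula. Since each entry $\Gamma^{i,k}$ is linear in $v$, the partials are simply constants (each equal to $0$ or to $\pm\sqrt{\nu}h$), so the calculation reduces to a finite sum of products whose signs are dictated by the antisymmetry of the cross product.

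To keep the bookkeeping minimal I would not enumerate the nine entries of $\Gamma$ by hand but instead work with the vector fields $V_k(y):=\sqrt{\nu}\,h\,(y\times e_k)$, $k=1,2,3$, where $\{e_k\}$ denotes the canonical basis of $\mathbb{R}^3$. Then $V_k$ is precisely the $k$-th column of $\Gamma(y)$, and the vector with components $C^i(y)$ coincides with $\sum_{k=1}^3 DV_k(y)V_k(y)$. Since $DV_k(y)w = \sqrt{\nu}\,h\,(w\times e_k)$, one obtains
\begin{align*}
\sum_{k=1}^3 DV_k(y)V_k(y) \;=\; \nu\, h^2 \sum_{k=1}^3 (y\times e_k)\times e_k.
\end{align*}
The triple-product identity $(a\times b)\times c = (a\cdot c)\,b-(b\cdot c)\,a$ gives $(y\times e_k)\times e_k = y_k\, e_k - y$, and summing over $k\in\{1,2,3\}$ yields $y-3y=-2y$, hence $C(y) = -2\nu h^2 y$.

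Plugging this expression into the Stratonovich-to-It\^o conversion identity for the $\mathbb{R}^3$-valued noise then produces exactly the correction term $-\nu\int_0^t h^2 y_r\,\dd r$, which is the content of the lemma. The only potential obstacle is careful sign-tracking inside the cross product when differentiating the skew-symmetric matrix $\Gamma$; the triple-product shortcut above sidesteps this entirely and reduces the proof to essentially one line of vector algebra.
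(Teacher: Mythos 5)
Your computation is correct and follows essentially the same route as the paper: both proofs reduce the lemma to evaluating the correction vector $C(y)=\sum_{k}DV_k(y)V_k(y)$ for the columns $V_k(y)=\sqrt{\nu}\,h\,(y\times e_k)$ of $\Gamma(y)$, and both arrive at $C(y)=-2\nu h^2 y$; the paper does this by enumerating the partial derivatives of the entries of $\Gamma$ component by component, whereas your triple-product identity $(y\times e_k)\times e_k = y_k e_k - y$ collapses the bookkeeping into one line, which is a cleaner way of evaluating the same sum. One small caution: make sure the final substitution uses the standard convention $\int \sigma(y)\circ\dd W=\int\sigma(y)\dd W+\tfrac12\int\sum_k DV_k(y)V_k(y)\dd r$, which yields the stated $-\nu\int_0^t h^2 y_r\dd r$; the conversion formula displayed in the appendix preamble carries a $-\tfrac12$ in front of $C$, and taking it literally would flip the sign of the correction.
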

\begin{proof}
	
	Define the vector $C(v):=(C^1(v),C^2(v),C^3(v))$, where
	\begin{align*}
	C^i(v):=\sum_{j=1}^{3}\sum_{k=1}^{3} \frac{\partial \Gamma^{i,k}(v)}{\partial v^j} \Gamma^{j,k}(v)=\sum_{j=1}^{3} C^{i,j}(v)\, 
	\end{align*}
	for $i=1,2,3$.
	We start by computing $C^1$ and we note that
	\begin{align*}
	&C^{1,1}(v)=\sum_{k=1}^{3} \frac{\partial \Gamma^{1,k}(v)}{\partial v^1} \Gamma^{1,k}(v)= 0  \, ,\\
	&C^{1,2}(v)=\sum_{k=1}^{3}  \frac{\partial \Gamma^{1,k}(v)}{\partial v^2} \Gamma^{2,k}(v)=  -\nu h^2 v_1 \, ,\\
	&C^{1,3}(v)=\sum_{k=1}^{3}  \frac{\partial \Gamma^{1,k}(v)}{\partial v^3} \Gamma^{3,k}(v)=  -\nu h^2 v_1 \, .
	\end{align*}
	Hence $C^1(v)=-2\nu h^2 v_1$. With analogous computations, we can conclude that $C(v)=-2\nu h^2 v$. Hence, since the It\^o-Stratonovich correction term is $C(v)/2$. This applies to the case $v=y$.
\end{proof}
\begin{lemma}\label{lemma:traccia}For all $v\equiv(v_1,v_2,v_3)\in \mathbb{R}^3$, where $\Gamma(v)$ is defined in \eqref{eq:def_B}, 
\begin{align*}
\mathrm{tr}(\Gamma(v) I \Gamma(v)^T)=2 \nu h^2 (v_1^2+v_2^2+v_3^2)\, .
\end{align*}
\end{lemma}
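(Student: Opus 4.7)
The plan is to exploit the skew-symmetry of $\Gamma(v)$ and the classical identity for the square of a cross-product matrix. Writing $\Gamma(v)=\sqrt{\nu}\,h\,[v]_\times$, where
\[
[v]_\times:=\begin{pmatrix}0&-v_3&v_2\\ v_3&0&-v_1\\ -v_2&v_1&0\end{pmatrix}
\]
is the skew-symmetric matrix implementing $w\mapsto v\times w$, one sees at once that $\Gamma(v)^T=-\Gamma(v)$, so
\[
\Gamma(v)\,I\,\Gamma(v)^T=\Gamma(v)\Gamma(v)^T=-\Gamma(v)^2=-\nu h^2\,[v]_\times^{\,2}.
\]

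The main step is then to compute $\mathrm{tr}([v]_\times^{\,2})$. I would use the well-known vector identity $v\times(v\times w)=(v\cdot w)\,v-|v|^2\,w$, which is exactly the statement $[v]_\times^{\,2}=vv^T-|v|^2 I$. Taking traces, since $\mathrm{tr}(vv^T)=|v|^2$ and $\mathrm{tr}(I)=3$, one obtains $\mathrm{tr}([v]_\times^{\,2})=|v|^2-3|v|^2=-2|v|^2$. Combining with the previous display gives
\[
\mathrm{tr}\bigl(\Gamma(v)\,I\,\Gamma(v)^T\bigr)=-\nu h^2\cdot(-2|v|^2)=2\nu h^2(v_1^2+v_2^2+v_3^2),
\]
which is the claim.

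There is no real obstacle here; the only thing to be careful about is keeping track of the sign produced by the transposition of the skew matrix. As a sanity check one may also proceed by brute force, reading the diagonal entries of $[v]_\times^{\,2}$ directly from the matrix product: the $(i,i)$-entry equals $-\sum_{j\ne i}v_j^{\,2}$, so the three diagonal entries are $-(v_2^2+v_3^2)$, $-(v_1^2+v_3^2)$, $-(v_1^2+v_2^2)$, whose sum is $-2|v|^2$, and multiplying by $-\nu h^2$ yields the same answer. I would present the identity-based argument in the text since it is shorter and makes the geometric content transparent.
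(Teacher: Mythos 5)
Your proof is correct. Your primary argument — writing $\Gamma(v)=\sqrt{\nu}\,h\,[v]_\times$, using skew-symmetry to get $\Gamma(v)\Gamma(v)^T=-\nu h^2[v]_\times^2$, and then invoking the identity $[v]_\times^2=vv^T-|v|^2I$ (equivalently the BAC--CAB rule) to read off the trace — is a slightly more structural route than the paper's, which simply records that the $(i,i)$ diagonal entry of $\Gamma(v)\Gamma(v)^T$ equals $\nu h^2(v_j^2+v_k^2)$ for $\{i,j,k\}=\{1,2,3\}$ and sums. In fact, your ``sanity check'' via the diagonal entries of $[v]_\times^2$ is exactly the paper's proof up to the factor $-\nu h^2$. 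Your identity-based version buys a little conceptual clarity (it explains the factor $2|v|^2$ as $3|v|^2-|v|^2$) at no extra cost; both are equally valid here.
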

\begin{proof}
In the notations of the proof in Lemma \ref{lemma:ito_stratonovich_correction}, 
\begin{align*}
	\Gamma^{i,i}(v)=\nu h^2 (v_j^2+v_k^2)\, , 
\end{align*}
for all $i,j,k\in \{1,2,3\}$, $i\neq j\neq k$.
\end{proof}
\begin{lemma}\label{lemma:ito_formula_square}
Let $m\in L^\infty(0,T;H^1)\cap L^2(0,T;H^2)\cap C([0,T];H^1)$ be the unique solution to the stochastic LLG equation \eqref{LLG}, then for all $ 0\leq s\leq t$
\begin{align*}
\| \partial_x m_t\|_{L^2}^2-\| \partial_x m_s\|_{L^2}^2= -2\nu \int_{s}^{t}\|m_r\times \partial_x^2 m_r\|^2_{L^2} \dd r&+2\nu (t-s)\|\partial_x h\|^2_{L^2}\\
&+2\int_{s}^{t} \left(\partial_x m_r,\sqrt{\nu} \partial_xh m_r\times \dd W_r\right)\, .
\end{align*}
\end{lemma}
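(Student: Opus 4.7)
\medskip

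The plan is to apply an It\^o formula to the functional $F(v) := \|\partial_x v\|_{L^2}^2$ evaluated along $m$, after first rewriting the equation in its It\^o formulation. Step one: invoke Lemma \ref{lemma:ito_stratonovich_correction} (whose proof on $\mathbb S^2$-valued processes is independent of whether the process $v$ is the spherical Brownian motion or the LLG solution) to convert the Stratonovich noise and obtain
\begin{align*}
\dd m_t = \left[m_t\times \partial_x^2 m_t - \nu\, m_t\times(m_t\times \partial_x^2 m_t) - \nu h^2 m_t\right]\dd t + \sqrt{\nu}\,h\, m_t\times \dd W_t .
\end{align*}
Step two: writing $F'(v)\cdot\phi = 2(\partial_x v,\partial_x\phi)$ and, via integration by parts and the Neumann condition on $m$, $F'(v)\cdot\phi = -2(\partial_x^2 v,\phi)$ when $v\in H^2$, together with $F''(v)[\phi,\psi]=2(\partial_x\phi,\partial_x\psi)$, allows one to pair $F'(m)$ with the various drift terms in $L^2$.

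Step three: compute each contribution separately. The drift $m\times \partial_x^2 m$ contributes $0$ since $\partial_x^2 m\cdot(m\times\partial_x^2 m)\equiv 0$ pointwise. For the Gilbert term, use identity \eqref{eq:u_cross_u_cross_Laplace} to replace $-m\times(m\times\partial_x^2 m)$ by $\partial_x^2 m + m|\partial_x m|^2$; combining $m\cdot\partial_x^2 m = -|\partial_x m|^2$ (a consequence of Lemma \ref{lemma:u_dot_nablau_0}) with the fundamental equality \eqref{eq:fundamental_equality} produces the contribution $-2\nu\|m\times\partial_x^2 m\|_{L^2}^2$. The It\^o--Stratonovich corrective drift $-\nu h^2 m$ yields, after integration by parts and $m\cdot\partial_x m=0$, the term $-2\nu\|h\partial_x m\|_{L^2}^2$. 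For the quadratic variation of the $\mathbb R^3$-valued diffusion $\sqrt{\nu}\,h\,m\times\cdot$, apply $F''(m)$ componentwise; using the algebraic identity $\sum_{k=1}^3(a\times e_k)\cdot(b\times e_k)=2(a\cdot b)$ together with $|m|=1$ and $m\cdot\partial_x m=0$, the correction evaluates to $2\nu\|\partial_x h\|_{L^2}^2 + 2\nu\|h\partial_x m\|_{L^2}^2$. The two $\|h\partial_x m\|_{L^2}^2$ terms cancel, leaving the announced drift.

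Step four: for the stochastic term, $F'(m)\cdot(\sqrt{\nu}\,h\,m\times \dd W) = 2\sqrt{\nu}(\partial_x m,\partial_x h\, m\times \dd W) + 2\sqrt{\nu}(\partial_x m,h\,\partial_x m\times \dd W)$; the second piece vanishes pointwise in $x$ thanks to $a\cdot(a\times b)=0$, leaving exactly the Martingale term in the statement.

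\medskip

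\emph{Main obstacle.} The regularity $m\in L^\infty(0,T;H^1)\cap L^2(0,T;H^2)$ is below what is required to apply It\^o's formula classically to the nonlinear functional $F(v)=\|\partial_x v\|_{L^2}^2$, because several drift terms (for instance $m\times\partial_x^2 m$) live only in $L^2_{t,x}$. The cleanest route I would take is the variational/Krylov--Rozovskii It\^o formula in the Gelfand triple $H^2\hookrightarrow H^1\hookrightarrow L^2$: the It\^o drift lies in $L^2(0,T;L^2)$, the state in $L^2(0,T;H^2)$, and the diffusion is Hilbert--Schmidt from $\mathbb R^3$ to $H^1$, so the abstract formula applies and the pairings above are well-defined. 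An alternative is to exploit the higher-order regularity statement recalled in the excerpt (for $h\in W^{2,\infty}$ and $u^0\in H^2$ the solution lies in $L^\infty_tH^2\cap L^2_tH^3$), prove the identity first for such regular data where everything is classical, and extend to $u^0\in H^1$ by continuous dependence on the initial datum and the lower-semicontinuity of the norms involved.
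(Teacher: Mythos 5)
Your proposal is correct and follows essentially the same route as the paper: convert to the It\^o formulation via Lemma \ref{lemma:ito_stratonovich_correction}, apply It\^o's formula to $\|\partial_x m\|_{L^2}^2$, observe that the Hamiltonian drift pairs to zero, that the Gilbert term yields $-2\nu\|m\times\partial_x^2 m\|_{L^2}^2$, that the $\|h\,\partial_x m\|_{L^2}^2$ contributions from the correction drift and the quadratic variation cancel, and that the martingale part reduces by $a\cdot(a\times b)=0$. Your explicit justification of the It\^o formula at the stated regularity (via the variational framework or by regularization of the data) is a point the paper's proof passes over in silence, and your use of \eqref{eq:u_cross_u_cross_Laplace} together with \eqref{eq:fundamental_equality} for the Gilbert term is an equivalent, only cosmetically different, computation.
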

\begin{proof}
Recall that for every initial condition $m^0\in H^1$, the unique solution $m$ belongs to $C([0,T];H^1)\cap L^2(0,T;H^2)$.
From Lemma \ref{lemma:ito_stratonovich_correction}, the LLG with It\^o noise reads
\begin{align*}
m_t-m_s=\int_{s}^{t} A^\nu(m_r) \dd r+ \int_{s}^{t} \sqrt{\nu} h m_r\times \dd W_r\, ,
\end{align*}
where $A^\nu(m_r) = -m_r\times \partial_x^2 m_r+\nu m_r\times [m_r\times \partial_x^2 m_r]-\nu  h^2 m_r\, .$
We apply the multidimensional It\^o's formula to $F(\partial_x m):=\|\partial_x m\|^2_{L^2}$. Since $F'(\partial_x  m)= 2 \partial_x  m\in L^2$ and $F''(\partial_x m)=2I\in \mathbb{R}^{3\otimes 3}$, we have
\begin{align*}
	\| \partial_x m_t\|_{L^2}^2-\| \partial_x m_s\|_{L^2}^2&=2\int_{s}^{t} \left(\partial_x A^\nu(m_r) , \partial_x m_r\right) \dd r\\
	&\quad+\frac{1}{2}\int_{0}^{T}\int_D \left(\mathrm{tr}\left(\Gamma(\partial_x(\sqrt{\nu}hm_r))^T2 I \Gamma(\partial_x(\sqrt{\nu}hm_r))\right)\right) \dd x \dd r\\
	&\quad+2\int_{s}^{t} \left(\partial_x m_r,\sqrt{\nu} \partial_x(h m_r)\times \dd W_r\right)\, .
\end{align*}
The drift can be rephrased as
\begin{align*}
\left(\partial_x A^\nu(m_r) , \partial_x m_r\right)&=-\nu \|m_r\times \partial_x^2 m_r\|^2_{L^2}-\nu \left(2h\partial_x h m_r+h^2\partial_x m, \partial_x m\right)\\
&= -\nu \|m_r\times \partial_x^2 m_r\|^2_{L^2}-\nu \|h\partial_x m\|_{L^2}^2\, ,
\end{align*}
where in the last equality we use that $\partial_x m_r\cdot m_r=0$ for a.e. $(t,x)\in [0,T]\times D$ and $\mathbb{P}$-a.s.
From Lemma \ref{lemma:traccia} 
\begin{align*}
\mathrm{tr}(\Gamma(\sqrt{\nu}hm_r) I \Gamma(\sqrt{\nu}hm_r)^T)=2  |\partial_x(\sqrt{\nu}hm_r)|^2&=2\nu(\partial_x h m_r+h\partial_x m_r)\cdot (\partial_x h m_r+h\partial_x m_r)\\
&=2\nu[|\partial_x h|^2 +h^2|\partial_x m|^2]\, ,
\end{align*}
where in the last equality we use that $m_r\in \mathbb{S}^2$ and that $\partial_x m_r\cdot m_r=0$ for a.e. $(t,x)\in [0,T]\times D$ and $\mathbb{P}$-a.s.
From the properties of the cross product, $\partial_x m\cdot \partial_x m \times \dd W_r=0$. In conclusion, the evolution of  $\|\partial_x m\|^2_{L^2}$ is described by
\begin{align*}
\| \partial_x m_t\|_{L^2}^2-\| \partial_x m_s\|_{L^2}^2= -2\nu \int_{s}^{t}\|m_r\times \partial_x^2 m_r\|^2_{L^2} \dd r&+2\nu\int_{s}^{t}\|\partial_x h\|^2_{L^2}\dd r\\
&+2\int_{s}^{t} \left(\partial_x m_r,\sqrt{\nu} \partial_xh m_r\times \dd W_r\right)\, .
\end{align*}
Since $h$ is constant in time, the assertion follows.
\end{proof}

\section{Embeddings}\label{sec:appendix_c}
We introduce some useful known results. Lemma \ref{lemma:tornstein} is contained in F.~Flandoli, D.~Gatarek \cite[Theorem 2.1]{flandoli_gatarek}.
\begin{lemma} \label{lemma:tornstein}
	Let $H_0\subset H \subset H_1$ be Banach spaces, where $H_0, H_1$ are reflexive and $H_0$ is compactly embedded into $H$. Let $p\in (1,\infty)$ and $\alpha\in (0,1)$ be given. Let $\mathcal{X}:= L^p(0,T; H_0)\cap W^{\alpha,p}(0,T;H_1)$ endowed with the natural norm. Then embedding of $\mathcal{X}$ into $L^p(0,T;H)$ is \textcolor{black}{compact}.
\end{lemma}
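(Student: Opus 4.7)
The statement is the classical Aubin--Lions--Simon compactness lemma adapted to fractional Sobolev regularity in time, and my plan is to reduce it to Simon's compactness criterion in $L^p(0,T;H)$ together with the Nikol'ski\u{i}-type time translation estimate that comes for free from the $W^{\alpha,p}$-regularity.

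The first step is to invoke Simon's compactness criterion: if $X\hookrightarrow\hookrightarrow Y\hookrightarrow Z$ with the first embedding compact and the second continuous, then a bounded family $F\subset L^p(0,T;X)$ is relatively compact in $L^p(0,T;Y)$ provided one has the uniform time-translation decay
\begin{align*}
\sup_{u\in F}\|u(\cdot+h)-u(\cdot)\|_{L^p(0,T-h;Z)}\xrightarrow[h\to 0^+]{}0.
\end{align*}
I would apply it with $X=H_0$, $Y=H$, $Z=H_1$: the compactness $H_0\hookrightarrow\hookrightarrow H$ is given, the continuity $H\hookrightarrow H_1$ holds by hypothesis, and any bounded family in $\mathcal{X}$ is, in particular, bounded in $L^p(0,T;H_0)$.

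The second step is the uniform time-translation estimate. Here I would use the standard embedding of $W^{\alpha,p}(0,T;H_1)$ into the Nikol'ski\u{i} space $N^{\alpha,p}(0,T;H_1)$, i.e.
\begin{align*}
\|u(\cdot+h)-u(\cdot)\|_{L^p(0,T-h;H_1)}\le C\,h^{\alpha}\|u\|_{W^{\alpha,p}(0,T;H_1)},
\end{align*}
which follows from the Slobodeckij definition by adding an intermediate point $s\in(t,t+h)$, using $\|u(t+h)-u(t)\|^p\lesssim\|u(t+h)-u(s)\|^p+\|u(s)-u(t)\|^p$, and integrating against $ds/h$ together with the bound $|s-t|^{-1-\alpha p}\gtrsim h^{-1-\alpha p}$. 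Combined with the uniform bound on $\|u\|_{W^{\alpha,p}(0,T;H_1)}$, the right-hand side tends to $0$ uniformly as $h\to 0$, so Simon's criterion produces the desired relative compactness in $L^p(0,T;H)$.

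The main technical obstacle is really this Nikol'ski\u{i}-type translation estimate: it is folklore but not automatic from the Slobodeckij seminorm, and it is the only place where the assumption $\alpha\in(0,1)$ (and the fractional time integrability) is used in an essential way. As an equivalent route avoiding an explicit appeal to Simon's theorem, one can instead combine Ehrling's lemma, namely for every $\epsilon>0$ there exists $C_\epsilon>0$ with $\|v\|_H\le\epsilon\|v\|_{H_0}+C_\epsilon\|v\|_{H_1}$ for $v\in H_0$ (a direct consequence of $H_0\hookrightarrow\hookrightarrow H$), with a subsequence extraction yielding Cauchy behavior in $L^p(0,T;H_1)$ built from the same time-translation estimate and an Arzel\`a--Ascoli argument on a countable dense set of times; the Ehrling inequality then upgrades Cauchy in $L^p(0,T;H_1)$ to Cauchy in $L^p(0,T;H)$.
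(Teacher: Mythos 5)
Your argument is correct. The paper itself does not prove this lemma at all: it is quoted verbatim from F.~Flandoli and D.~G\c{a}tarek (Theorem 2.1 of \cite{flandoli_gatarek}), so there is no in-paper proof to compare against. What you have written is essentially the standard proof of that cited result: the Nikol'ski\u{i}-type translation bound $\|u(\cdot+h)-u\|_{L^p(0,T-h;H_1)}\lesssim h^\alpha\|u\|_{W^{\alpha,p}(0,T;H_1)}$ is obtained exactly as you sketch (averaging the intermediate point $s$ over $(t,t+h)$ and bounding the resulting double integrals by the Slobodeckij seminorm), and feeding it into Simon's Theorem~5 with the triple $H_0\hookrightarrow\hookrightarrow H\hookrightarrow H_1$ gives the claim directly; note that this route does not even use the reflexivity of $H_0$ and $H_1$, which is a hypothesis carried over from Flandoli--G\c{a}tarek's own (weak-compactness-plus-Ehrling) argument rather than something the statement needs. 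Your alternative route via Ehrling's lemma is the one closer to the original reference, and is also sound, though as you note it still requires establishing relative compactness in $L^p(0,T;H_1)$ first, which again rests on the same translation estimate together with the compactness of $H_0\hookrightarrow H_1$.
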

A further useful embedding theorem, see e.g. \cite[Theorem 1.8.5]{martina_book}.
\begin{lemma}\label{th:embedding_c_w_appendix}
Let $\alpha > 0$, $1, p <\infty$ and $\ell \in \mathbb{R}$. Then the embedding of 
\begin{align*}
	L^\infty (0,T;L^p)\cap C^\alpha([0,T]; W^{\ell, 2})
\end{align*}
into 
\begin{align*}
C_{\mathrm{w}}([0,T]; L^p)
\end{align*}
is sequentially compact.
\end{lemma}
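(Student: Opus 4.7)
The plan is to combine an Arz\`ela--Ascoli argument with a density/duality argument. Let $(u_n)_n$ be a bounded sequence in $L^\infty(0,T;L^p)\cap C^\alpha([0,T];W^{\ell,2})$, with $M:=\sup_n\|u_n\|_{L^\infty(0,T;L^p)}+\sup_n\|u_n\|_{C^\alpha([0,T];W^{\ell,2})}<\infty$. I would pick a countable family $\{\phi_k\}_{k\in\mathbb{N}}\subset C_c^\infty(D)$ that is simultaneously dense in $L^{p'}$ (for $p'$ the H\"older conjugate of $p$) and such that each $\phi_k\in W^{-\ell,2}$. For each $k$, consider the real-valued function $f_{n,k}(t):=\langle u_n(t),\phi_k\rangle$, where the pairing agrees with the $L^p$--$L^{p'}$ duality and with the $W^{\ell,2}$--$W^{-\ell,2}$ duality because $\phi_k$ is smooth.

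The key step is then to verify that the family $(f_{n,k})_n$ is uniformly bounded and equicontinuous for each fixed $k$. Uniform boundedness follows from $|f_{n,k}(t)|\le M\|\phi_k\|_{L^{p'}}$. Equicontinuity follows from the $C^\alpha$-regularity in $W^{\ell,2}$:
\begin{equation*}
|f_{n,k}(t)-f_{n,k}(s)|\le \|u_n(t)-u_n(s)\|_{W^{\ell,2}}\|\phi_k\|_{W^{-\ell,2}}\le M\,|t-s|^\alpha\|\phi_k\|_{W^{-\ell,2}}.
\end{equation*}
By Arz\`ela--Ascoli and a standard diagonal extraction over $k\in\mathbb{N}$, I obtain a subsequence $(u_{n_j})_j$ such that $(f_{n_j,k})_j$ converges uniformly on $[0,T]$ to some continuous function $g_k$, for every $k\in\mathbb{N}$.

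To identify the limit as an element of $C_{\mathrm{w}}([0,T];L^p)$, I would argue by duality. For each $t\in[0,T]$, the functional $\phi_k\mapsto g_k(t)$ is linear on the dense subspace $\mathrm{span}\{\phi_k\}$ of $L^{p'}$ and is bounded by $M\|\phi_k\|_{L^{p'}}$ (pass to the limit in $|f_{n_j,k}(t)|\le M\|\phi_k\|_{L^{p'}}$). Since $1<p<\infty$ guarantees $(L^{p'})^*=L^p$ via Riesz representation, the functional extends uniquely to an element $u(t)\in L^p$ with $\|u(t)\|_{L^p}\le M$, so that $\langle u(t),\phi_k\rangle=g_k(t)$ for every $k$. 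A standard $3\varepsilon$-argument upgrades the convergence from $\{\phi_k\}$ to all $\phi\in L^{p'}$: given $\phi\in L^{p'}$ and $\varepsilon>0$, choose $\phi_k$ with $\|\phi-\phi_k\|_{L^{p'}}<\varepsilon$, so
\begin{equation*}
\sup_{t\in[0,T]}|\langle u_{n_j}(t)-u(t),\phi\rangle|\le \sup_{t\in[0,T]}|\langle u_{n_j}(t)-u(t),\phi_k\rangle|+2M\varepsilon,
\end{equation*}
and the first term tends to $0$ as $j\to\infty$. This yields $u_{n_j}\to u$ in $C_{\mathrm{w}}([0,T];L^p)$ in the sense defined in Section~\ref{sec:notations_setting}, and the same density argument shows $t\mapsto\langle u(t),\phi\rangle$ is continuous for every $\phi\in L^{p'}$, so $u\in C_{\mathrm{w}}([0,T];L^p)$.

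I expect the main obstacle to be the technical compatibility between the two dualities: the Hölder bound in $t$ lives in $W^{\ell,2}$, while the target continuity must be tested against $L^{p'}$. This is precisely why I would restrict the initial Arz\`ela--Ascoli step to smooth, compactly supported test functions, which are automatically in $L^{p'}\cap W^{-\ell,2}$ and dense in both spaces, and only afterwards extend by density on the $L^{p'}$-side using the uniform $L^p$-bound. A secondary subtlety is the identification of $u(t)$ via Riesz representation, which requires reflexivity (hence $1<p<\infty$); the excerpt's displayed range should be read as $1<p<\infty$.
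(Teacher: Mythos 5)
Your proof is correct and follows precisely the standard Arz\`ela--Ascoli-plus-density argument; the paper does not prove this lemma itself but simply cites Theorem~1.8.5 of \cite{martina_book}, whose proof proceeds along the same route (pointwise-in-$k$ equicontinuity from the H\"older bound in $W^{\ell,2}$, diagonal extraction, identification of the limit by duality, and a $3\varepsilon$ upgrade using the uniform $L^p$ bound). Your remark that the hypothesis must be read as $1<p<\infty$, so that $(L^{p'})^*=L^p$ and $C^\infty_c$ is dense in $L^{p'}$, correctly repairs the typo in the stated range of $p$.
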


\section{Stationarity} \label{sec:appendix_d}
In the paper, we make use of  \cite[Lemma A.5]{breit_fereisil_hofmanova_masloski}: the result gives sufficient conditions for the stationarity of a limit of stationary processes.
\begin{proposition}\label{pro:sufficient_condition_stationary}
	Let $k\in \mathbb{N}_0$, $p\in [1,+\infty)$ and $(X^n)_n$ be a sequence of $W^{k,p}$-valued stochastic processes which are stationary on $W^{k,p}$. Assume that the following two conditions hold
	\begin{enumerate}
		\item (Uniform bound) For all $T>0$
		\begin{align*}
		\sup_{n\in \mathbb{N}}\mathbb{E}\left[\sup_{t\in [0,T]}\|X^n_t\|^2_{W^{k,p}}\right]<+\infty\, .
		\end{align*}
		\item(Weak convergence) It holds $\mathbb{P}$-a.s. that
		\begin{align*}
		X^n\longrightarrow X \quad \mathrm{in} \quad C_{\mathrm{w}}([0,+\infty); W^{k,p})\, .
		\end{align*}
	\end{enumerate}
	Then $X$ is stationary on $W^{k,p}$.
\end{proposition}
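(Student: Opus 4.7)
The plan is to establish, for every $m \in \mathbb{N}$, every $0 \leq t_1 < \cdots < t_m$ and every $\tau > 0$, the coincidence of the joint laws $\mathcal{L}(X_{t_1}, \dots, X_{t_m}) = \mathcal{L}(X_{t_1+\tau}, \dots, X_{t_m+\tau})$ on $(W^{k,p})^{\times m}$. Since $W^{k,p}$ is a separable Banach space, the strong Borel $\sigma$-algebra on $(W^{k,p})^{\times m}$ coincides with the cylindrical $\sigma$-algebra generated by the continuous linear functionals; consequently, the two laws above agree as soon as they have the same characteristic functional, that is, as soon as
\[
\mathbb{E}\Big[\exp\Big( i \sum_{j=1}^m \langle g_j, X_{t_j}\rangle_{(W^{k,p})^*, W^{k,p}} \Big)\Big] = \mathbb{E}\Big[\exp\Big( i \sum_{j=1}^m \langle g_j, X_{t_j+\tau}\rangle_{(W^{k,p})^*, W^{k,p}} \Big)\Big] \qquad (\star)
\]
for every choice of $g_1, \dots, g_m \in (W^{k,p})^*$.

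The key step is to obtain $(\star)$ by passing to the limit in the identical identity written with $X$ replaced by $X^n$, which is valid for every $n$ by the standing stationarity hypothesis on the sequence $(X^n)_n$. From hypothesis (2) one has $\mathbb{P}$-a.s.~that $X^n \to X$ in $C_w([0,\infty); W^{k,p})$, and by the very definition of that topology this implies that for every fixed $t \geq 0$ and every $g \in (W^{k,p})^*$ the scalar random variable $\langle g, X^n_t\rangle$ converges $\mathbb{P}$-a.s.~to $\langle g, X_t\rangle$. Hence the complex exponential integrands on both sides of the $n$-version of $(\star)$ converge $\mathbb{P}$-a.s.~pointwise and are uniformly bounded in modulus by $1$, so dominated convergence delivers $(\star)$ and completes the argument.

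The main (mild) obstacle is the identification of the strong Borel $\sigma$-algebra on $(W^{k,p})^{\times m}$ with the cylindrical one, which is what legitimises the reduction from equality of laws to equality of characteristic functionals; this is a standard Pettis-type fact relying only on separability of $W^{k,p}$. It is worth noting that hypothesis (1) plays no explicit role in the limiting step itself, since the test functionals $e^{i\langle g, \cdot\rangle}$ are bounded by $1$ and no uniform integrability is needed to exchange limit and expectation. The uniform bound (1) instead enters the story upstream, where in the applications of this proposition it is used to secure the tightness that underpins the Skorokhod-Jakubowski theorem and thereby produces the a.s.~convergent subsequence supplied by hypothesis (2).
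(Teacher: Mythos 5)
Your proposal is correct. Note that the paper does not actually prove this proposition: it is imported verbatim from the cited reference (Lemma A.5 of Breit--Feireisl--Hofmanov\'a--Maslowski), and your argument --- reduce equality of laws on $(W^{k,p})^{\times m}$ to equality of characteristic functionals via separability of $W^{k,p}$ (Pettis), use stationarity of each $X^n$ to get the identity at level $n$, and pass to the limit by the pointwise weak convergence $\langle g, X^n_t\rangle \to \langle g, X_t\rangle$ implied by the paper's definition of convergence in $C_{\mathrm{w}}$ together with dominated convergence --- is exactly the standard proof of that lemma. Your side remark is also accurate: with convergence in $C_{\mathrm{w}}$ defined against \emph{all} $g^*\in (W^{k,p})^*$, the uniform bound (1) is not needed for the limit passage itself (it would only be required to upgrade convergence tested on a dense subset of the dual, or, as you say, it enters upstream in securing tightness).
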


\bibliographystyle{plain}

\end{document}